\documentclass[11pt]{article}
\usepackage{amsfonts, amsthm, amssymb, amsmath, stmaryrd, mathtools}
\usepackage{mathrsfs,array}
\usepackage{xy}
\usepackage{hyperref}
\input xy
\xyoption{all}

\def\from{\colon}

\DeclareMathOperator{\Spa}{Spa}

\DeclareMathOperator{\Spf}{Spf}

\DeclareMathOperator{\Proj}{Proj}
\DeclareMathOperator{\ad}{ad}

\DeclareMathOperator{\nr}{nr}
\DeclareMathOperator{\Gal}{Gal}
\DeclareMathOperator{\Hom}{Hom}
\DeclareMathOperator{\tr}{tr}
\DeclareMathOperator{\Spec}{Spec}
\def\isom{\cong}
\def\et{\text{\'et}}
\def\C{\mathbb{C}}

\def\Q{\mathbb{Q}}

\def\Z{\mathbb{Z}}
\def\FF{\mathbb{F}}
\def\OO{\mathcal{O}}
\def\injects{\hookrightarrow}

\DeclareMathOperator{\Frob}{Frob}

\newcommand{\abs}[1]{\left\lvert #1 \right\rvert}
\newcommand{\class}[1]{\left< #1 \right>}
\newcommand{\set}[1]{\left\{ #1 \right\}}

\newcommand{\tatealgebra}{\class}
\newcommand{\powerseries}[1]{\llbracket #1 \rrbracket}
\newcommand{\laurentseries}[1]{(\!(#1)\!)}

\numberwithin{equation}{subsection}
\newtheorem{Theorem}{Theorem}
\numberwithin{Theorem}{subsection}
\newtheorem{lemma}[Theorem]{Lemma}

\newtheorem{prop}[Theorem]{Proposition}
\newtheorem{conj}[Theorem]{Conjecture}

\theoremstyle{definition}
\newtheorem{defn}[Theorem]{Definition}

\newtheorem{exmp}[Theorem]{Example}

\title{$\Gal(\overline{\Q}_p/\Q_p)$ as a geometric fundamental group}
\author{Jared Weinstein}

\begin{document}
\maketitle

\section{Introduction}  Let $p$ be a prime number.  In this article we present a theorem, suggested by Peter Scholze, which states that $\Gal(\overline{\Q}_p/\Q_p)$ is the \'etale fundamental group of certain object $Z$ which is {\em defined over an algebraically closed field}.  As a consequence, $p$-adic representations of $\Gal(\overline{\Q}_p/\Q_p)$ correspond to $\Q_p$-local systems on $Z$.

The precise theorem involves perfectoid spaces, \cite{ScholzePerfectoidSpaces}.  Let $C/\Q_p$ be complete and algebraically closed.  Let $D$ be the open unit disk centered at 1, considered as a rigid space over $C$, and given the structure of a $\Z_p$-module where the composition law is multiplication, and $a\in \Z_p$ acts by $x\mapsto x^a$.  Let
\[ \tilde{D}=\varprojlim_{x\mapsto x^p} D. \]
Then $\tilde{D}$ is no longer a classical rigid space, but it does exist in Huber's category of adic spaces, and is in fact a perfectoid space.    Note that $\tilde{D}$ has the structure of a $\Q_p$-vector space.  Let $\tilde{D}^*=\tilde{D}\backslash\set{0}$;  this admits an action of $\Q_p^\times$.

\begin{Theorem} \label{MainTheoremForQp} The category of $\Q_p^\times$-equivariant finite \'etale covers of $\tilde{D}^*$ is equivalent to the category of finite \'etale $\Q_p$-algebras.
\end{Theorem}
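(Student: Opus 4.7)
The plan is to reduce the theorem to Fargues--Fontaine's computation of the \'etale fundamental group of their curve as $\Gal(\overline{\Q}_p/\Q_p)$, via Scholze's tilting equivalence.

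First, I would apply the tilting equivalence: finite \'etale covers of $\tilde{D}^*$ are canonically in bijection with finite \'etale covers of its tilt $\tilde{D}^{*,\flat}$, a perfectoid space over the algebraically closed characteristic $p$ field $C^\flat$, and the bijection preserves the $\Q_p^\times$-action. Since the tilt of the multiplicative formal group $\hat{\mathbb{G}}_m$ is the additive formal group in characteristic $p$, $\tilde{D}^\flat$ identifies with the perfectoid open unit disk over $C^\flat$; under this identification, the element $p \in \Q_p^\times$ acts as the Frobenius $\phi$, while $\Z_p^\times$ acts via the inherited Lubin--Tate $\Z_p$-module structure.

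Second, I would relate $\tilde{D}^{*,\flat}$ (with its $\Q_p^\times$-action) to the adic Fargues--Fontaine curve $X$ associated to $C^\flat$ and $\Q_p$. The goal is to exhibit a natural period map $\tilde{D}^{*,\flat} \to X$ and show it is a pro-\'etale $\Q_p^\times$-torsor, reflecting the fact that $\tilde{D}^*$ is the universal cover of $\hat{\mathbb{G}}_m$ in Lubin--Tate theory. Granting this, $\Q_p^\times$-equivariant finite \'etale covers of $\tilde{D}^{*,\flat}$ correspond to finite \'etale covers of $X$.

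Third, I would invoke the Fargues--Fontaine theorem: $\pi_1^{\et}(X) = \Gal(\overline{\Q}_p/\Q_p)$, equivalently, finite \'etale covers of $X$ correspond to finite \'etale $\Q_p$-algebras. Composing the three equivalences yields the theorem.

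The hardest part is step two, constructing the period map and verifying the $\Q_p^\times$-torsor property; this requires a concrete description of $\tilde{D}^\flat$ as a Lubin--Tate universal cover and its relation to the Fargues--Fontaine period structure. The Fargues--Fontaine theorem itself is the other deep input and would be used here as a black box.
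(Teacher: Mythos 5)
Your overall architecture (tilt, relate to the Fargues--Fontaine curve, quote simple connectedness of the curve) is the same in spirit as the paper's, but the step you yourself flag as hardest contains a genuine gap, and as stated it cannot work. There is no ``period map'' $\tilde{D}^{*,\flat}\to X$ of adic spaces: $\tilde{D}^{*,\flat}$ is a perfectoid space over $C^\flat$, hence of characteristic $p$, while the adic curve $X^{\ad}_{C^\flat,\Q_p}$ is an adic space over $\Q_p$, so its structure sheaf consists of $\Q_p$-algebras, which admit no homomorphisms to $C^\flat$-algebras. What is actually true, and what the paper proves (Prop.~\ref{HTildeVersusY}), is an identification of $\tilde{D}^{*,\flat}$ with the tilt of $Y^{\ad}_{C^\flat,\Q_p}\hat{\otimes}\hat{\Q}_p(\mu_{p^\infty})$ --- note the cyclotomic base change, which is absent from your sketch --- under which the \emph{geometric} $\Z_p^\times$-action matches the \emph{arithmetic} Galois action of $\Gal(\Q_p(\mu_{p^\infty})/\Q_p)$ on the scalars, and the action of $p$ matches $\phi^{-1}\otimes 1$ only up to composition with absolute Frobenius (this is why Lemma~\ref{FrobEquivariant} is needed; your ``$p$ acts as $\phi$'' is too coarse). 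This identification is not a formal torsor assertion: it rests on the Lubin--Tate/field-of-norms computation $\tilde{H}_{E,0}\isom\Spf\OO_{\hat{E}_\infty^\flat}$ (Prop.~\ref{H0AndEinfty}), and the descent from $\hat{\Q}_p(\mu_{p^\infty})$ back down to $\Q_p$, which you bury inside the torsor property, is carried out in the paper by the Fontaine--Wintenberger norm-field equivalence followed by taking $\Z_p^\times$-invariants. Even if you recast everything in a pro-\'etale/diamond framework, you would still have to justify descent of finite \'etale covers along a torsor under the non-profinite group $\Q_p^\times$, which in the paper splits into the $p^\Z$-part (the geometric quotient $Y^{\ad}\to X^{\ad}$) and the $\Z_p^\times$-part (Galois/norm-field descent); neither is automatic.

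The second deep input you use as a black box is also not available in the form you need. Fargues--Fontaine prove $\pi_1^{\et}=\Gal(\overline{\Q}_p/\Q_p)$ for the \emph{schematic} curve $X$, whereas your (repaired) argument lands on the adic curve $X^{\ad}$, and the paper explicitly notes that geometric simple connectedness of $X^{\ad}$ cannot be deduced directly from the schematic statement: without Conj.~\ref{BINoetherian} there is no good theory of coherent sheaves, hence no \'etale GAGA, in characteristic $0$. This is why the paper passes to characteristic $p$, where $Y^{\ad}$ is the punctured rigid disc, and proves simple connectedness via the Hartl--Pink classification of $\sigma$-bundles together with the trace-pairing argument (Prop.~\ref{XGeomConnectedCharp}, then Prop.~\ref{GeometricallySimplyConnected} by tilting). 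So one of the two ``known'' theorems you cite is in fact a result the proof must establish, and the other (the torsor step) conceals the paper's main computations.
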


The object $Z$ of the first paragraph is then the quotient $Z=\tilde{D}^*/\Q_p^\times$.  This quotient doesn't belong to the category of adic spaces.  Rather, one has a Yoneda-style construction.  The category $\text{Perf}_C$ of perfectoid spaces over $C$ has a pro-\'etale topology, \cite{ScholzePAdicHodgeTheory}, and one has a sheaf of sets $Z$ on $\text{Perf}_C$, namely the sheafification of $X\mapsto  \Hom(X,\tilde{D}^*)/\Q_p^\times$.  Thus $Z$ belongs to the category of sheaves of sets on $\text{Perf}_C$ which admit a surjective map from a representable sheaf.  In this category, a morphism $\mathcal{F}'\to \mathcal{F}$ is called finite \'etale if the pull-back to any representable sheaf is finite \'etale in the usual sense.   Then one may define the fundamental group for such objects, and Thm. \ref{MainTheoremForQp} asserts that  $\pi_1^{\et}(Z)=\Gal(\overline{\Q}_p/\Q_p)$.

Theorem \ref{MainTheoremForQp} can be generalized to a finite extension $E/\Q_p$.  Let $\pi\in E$ be a uniformizer, and let $H$ be the corresponding Lubin-Tate formal $\OO_E$-module.  Then $D$ gets replaced by the generic fiber $H^{\ad}_C$;  this is the unit disc centered at 0, considered as an adic space over $C$.  $H^{\ad}_C$ is endowed with the $\OO_E$-module structure coming from $H$.  Form the {\em universal cover} $\tilde{H}^{\ad}_{C}=\varprojlim H^{\ad}_{C}$, where the inverse limit is taken with respect to multiplication by $\pi$ in $H$.  Then $\tilde{H}^{\ad}_C$ is an $E$-vector space object in the category of perfectoid spaces over $C$.  Form the quotient
\[ Z_E = (\tilde{H}^{\ad}_C\backslash\set{0})/E^\times. \]
Then the main theorem (Thm. \ref{MainTheorem}) states that the categories of finite \'etale covers of $Z_E$ and $\Spec E$ are equivalent, so that $\pi^{\et}_1(Z_E)=\Gal(\overline{E}/E)$.  

The proof hinges on a combination of two themes:  the {\em fundamental curve $X$ of $p$-adic Hodge theory}, due to Fargues-Fontaine, and the {\em tilting equivalence}, due to Scholze.  Let us sketch the proof in the case $E=\Q_p$.  Let $C^\flat$ be the tilt of $C$, a perfectoid field in characteristic $p$.  Consider the punctured open disc $D^*_{C^{\flat}}$ (with parameter $t$) and its universal cover $\tilde{D}^*_{C^\flat}$.  Then $\tilde{D}^*_{C^\flat}$ is simultaneously a perfectoid space over two fields:
\[
\xymatrix{
& \Spa \FF_p\laurentseries{t^{1/p^\infty}} \\
\tilde{D}_{C^{\flat}} \ar[ur] \ar[dr] & \\
& \Spa C^{\flat}
}
\]
Considered as a perfectoid space over $C^{\flat}$, $\tilde{D}_{C^{\flat}}^*$ has an obvious ``un-tilt'', namely $\tilde{D}_C^*$.  The other field $\FF_p\laurentseries{t^{1/q^\infty}}$ is the tilt of $\hat{\Q}_p(\mu_{p^\infty})$.  It turns out that there is a perfectoid space over $\hat{\Q}_p(\mu_{p^\infty})$ whose tilt is also $\tilde{D}_C^*$, and here is where the Fargues-Fontaine curve comes in.

The construction of the Fargues-Fontaine curve $X$ is reviewed in \S\ref{TheCurve}.  $X$ is an integral noetherian scheme of dimension 1 over $\Q_p$, whose closed points parametrize un-tilts of $C^{\flat}$ modulo Frobenius.  For our purposes we need the adic version $X^{\ad}$, which is the quotient of another adic space $Y^{\ad}$ by a Frobenius automorphism $\phi$.  The extension of scalars $Y^{\ad}\hat{\otimes}\hat{\Q}_p(\mu_{p^\infty})$ is a perfectoid space;  by a direct calculation (Prop. \ref{HTildeVersusY}) we show that its tilt is isomorphic to $\tilde{D}^*_{C^\flat}$.  In this isomorphism, the action of $\Gal(\Q_p(\mu_{p^\infty})/\Q_p)\isom \Z_p^\times$ on the field of scalars $\hat{\Q}_p(\mu_{p^\infty})$ corresponds to the geometric action of $\Z_p^\times$ on $\tilde{D}^*_{C^\flat}$, and the automorpism $\phi$ corresponds (up to absolute Frobenius) to the action of $p$ on $\tilde{D}^*_{C^\flat}$.  

Therefore under the tilting equivalence, finite \'etale covers of $\tilde{D}_C^*$ and $Y^{\ad}\hat{\otimes}\hat{\Q}_p(\mu_{p^\infty})$ are identified.  The same goes for $\tilde{D}_C^*/p^\Z$ and $X^{\ad}\hat{\otimes}\hat{\Q}_p(\mu_{p^\infty})$.  Now we apply the key fact that $X^{\ad}$ is geometrically simply connected.  The same statement is proved in \cite{FarguesFontaineCurve} for the algebraic curve $X$;  we have adapted the proof for $X^{\ad}$ in Prop. \ref{GeometricallySimplyConnected}.  Thus finite \'etale covers of $\tilde{D}_C^*/p^\Z$ are equivalent to finite \'etale $\Q_p(\mu_{p^\infty})$-algebras.  Now we can descend to $\Q_p$:  $\Z_p^\times$-equivariant finite \'etale covers of $\tilde{D}_C^*/p^\Z$ are equivalent to finite \'etale $\Q_p$-algebras, which is Thm. \ref{MainTheoremForQp}.

This entire article is an elaboration of comments made to me by Peter Scholze.  I also thank Laurent Fargues for many helpful remarks.  The author is supported by NSF Award DMS-1303312.

\section{The Fargues-Fontaine curve}
\label{TheCurve}

\subsection{The rings $B$ and $B^+$}  Here we review the construction of the Fargues-Fontaine curve.  The construction requires the following two inputs:
\begin{itemize}
\item A finite extension $E/\Q_p$, with uniformizer $\pi$ and residue field $\FF_q/\FF_p$,
\item A perfectoid field $F$ of characteristic $p$ containing $\FF_q$.
\end{itemize}

Note that a perfectoid field $F$ of characteristic $p$ is the same as a perfect field of characteristic $p$ which is complete with respect to a nontrivial (rank 1) valuation.  Let $x\mapsto \abs{x}$ denote one such valuation.

Let $W(F)$ be the ring of Witt vectors of the perfect field $F$, and let $W_{\OO_E}(F)=W(F)\otimes_{W(\FF_q)} \OO_E$.  A typical element of $W_{\OO_E}(F)$ is a series
\[ x = \sum_{n\gg -\infty} [x_n]\pi^n, \]
where $x_n\in F$.  Let $B^b\subset W_{\OO_E}(F)[1/\pi]$ denote the subalgebra defined by the condition that $\abs{x_n}$ is bounded as $n\to\infty$.  Equivalently, $B^b=W_{\OO_E}(\OO_F)[1/\pi,1/[\varpi]]$, where $\varpi\in F$ is any element with $0<\abs{\varpi}<1$.   Then $B^b$ clearly contains $B^{b,+}=W(\OO_F)\otimes_{W(k)} \OO_E$ as a subring.

For every $0<\rho<1$ we define a norm $\abs{\;}_\rho$ on $B^b$ by
\[ \abs{x}_{\rho}=\sup_{n\in\Z} \abs{x_n} \rho^n. \]

\begin{defn} Let $B$ denote the Fr\'echet completion of $B^{b}$ with respect to the family of norms $\abs{\;}_\rho$, $\rho\in (0,1)$.
\end{defn}

$B$ can be expressed as the inverse limit $\varprojlim_I B_I$ of Banach $E$-algebras, where $I$ ranges over closed subintervals of $(0,1)$, and $B_I$ is the completion of $B^b$ with respect to the norm
\[ \abs{x}_I=\sup_{\rho\in I}\abs{x}_\rho. \]
(Note that $\abs{x}_\rho$ is continuous in $\rho$ and therefore bounded on $I$.)

It will be useful to give an explicit description of $B_I$.

\begin{lemma}  Let $I\subset (0,1)$ be a closed subinterval whose endpoints lie in the value group of $F$:  $I=[\abs{\varpi_1},\abs{\varpi_2}]$.  Let $B^{b,\circ}_I$ be subring of elements $x\in B^b$ with $\abs{x}_I \leq 1$.  Then   $B^{b,\circ}_I:=B^{b,+}\left[\frac{[\varpi_1]}{\pi},\frac{\pi}{[\varpi_2]}\right]$.
\end{lemma}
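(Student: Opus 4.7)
My plan is to prove the two inclusions separately. Write $T_1 = [\varpi_1]/\pi$ and $T_2 = \pi/[\varpi_2]$. The inclusion $B^{b,+}[T_1,T_2]\subseteq B^{b,\circ}_I$ is immediate: from $|\sum_n [x_n]\pi^n|_\rho = \sup_n |x_n|\rho^n$ one computes $|T_1|_\rho = |\varpi_1|/\rho$ and $|T_2|_\rho = \rho/|\varpi_2|$, each of which is $\leq 1$ for every $\rho \in I = [|\varpi_1|,|\varpi_2|]$. Any element of $B^{b,+}=W(\OO_F)\otimes_{W(k)}\OO_E$ has all Teichmuller coordinates in $\OO_F$, so its $|\cdot|_\rho$-norm is $\leq 1$. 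Submultiplicativity of $|\cdot|_I$ then delivers the inclusion.

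For the reverse inclusion, I would take $x = \sum_{n\geq n_0}[x_n]\pi^n \in B^b$ with $|x|_I\leq 1$. Membership in $B^b$ forces $|x_n|\leq M$ uniformly for some constant $M$, while $|x|_I\leq 1$ unpacks (after computing $\sup_{\rho\in I}\rho^n$ separately in the two cases) to $|x_n|\leq |\varpi_1|^{-n}$ for $n<0$ and $|x_n|\leq |\varpi_2|^{-n}$ for $n\geq 0$. Split $x=x_-+x_+$ according to the sign of $n$. The negative piece $x_-$ is a \emph{finite} sum, and each term satisfies $[x_n]\pi^n = [x_n\varpi_1^{-n}]\cdot T_1^{-n}$, where $x_n\varpi_1^{-n}\in\OO_F$ by the norm bound, so the Teichmuller lies in $B^{b,+}$; this exhibits $x_-$ as a polynomial in $T_1$ over $B^{b,+}$.

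The main obstacle is the positive part $x_+=\sum_{n\geq 0}[x_n]\pi^n$: its naive termwise rewrite $\sum [x_n\varpi_2^n]T_2^n$ is formally a power series, not a polynomial, so the strategy used for $x_-$ cannot be copied verbatim. The resolution is to exploit the uniform bound $|x_n|\leq M$. I choose an integer $k\geq 0$ with $M|\varpi_2|^k\leq 1$ and split
\[
x_+ \;=\; \sum_{n=0}^{k-1}[x_n\varpi_2^n]\,T_2^n \;+\; T_2^k\cdot y, \qquad y:=\sum_{m\geq 0}[x_{m+k}\varpi_2^k]\,\pi^m.
\]
The head is a genuine polynomial in $T_2$ over $B^{b,+}$. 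For the tail, the bound $|x_{m+k}\varpi_2^k|\leq M|\varpi_2|^k\leq 1$ places every Teichmuller coefficient of $y$ in $\OO_F$, whence $y\in B^{b,+}$; verifying the displayed equality reduces to the routine identities $[a][b]=[ab]$ together with $\pi$-adic convergence of Teichmuller expansions in $W_{\OO_E}(F)$. Assembling the pieces yields $x = x_- + x_+ \in B^{b,+}[T_1,T_2]$, completing the argument.
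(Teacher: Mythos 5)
Your argument is correct and is essentially the paper's proof: the easy inclusion via $\abs{T_1}_\rho,\abs{T_2}_\rho\le 1$ on $I$, the coefficient bounds $\abs{x_n}\le\abs{\varpi_1}^{-n}$ ($n<0$), $\abs{x_n}\le\abs{\varpi_2}^{-n}$ ($n\ge 0$), the finite negative tail absorbed into $B^{b,+}[T_1]$, and the infinite positive tail handled by truncating at $k$ with $M\abs{\varpi_2}^k\le 1$ and factoring out $T_2^k$, exactly as in the paper. One small slip to fix: for $n<0$ the identity should read $[x_n]\pi^n=[x_n\varpi_1^{\,n}]\,T_1^{-n}$ (not $[x_n\varpi_1^{-n}]$), and it is precisely the bound $\abs{x_n}\le\abs{\varpi_1}^{-n}$ that puts $x_n\varpi_1^{\,n}$ in $\OO_F$.
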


\begin{proof} It is easy to see that
\[ \abs{\frac{[\varpi_1]}{\pi}}_I = \abs{\frac{\pi}{[\varpi_2]}} = 1, \]
which proves that $B^{b,+}\left[\frac{[\varpi_1]}{\pi},\frac{\pi}{[\varpi_2]}\right]\subset B^{b,\circ}_I$.

Conversely, suppose $x=\sum_{n\gg -\infty} [x_n]\pi^n\in B^{b,\circ}_I$.
Then $\abs{x_n}\leq \abs{\varpi_1}^{-n}$ for $n<0$ and $\abs{x_n}\leq \abs{\varpi_2}^{-n}$ for $n\geq 0$.  This shows that the ``tail term" $\sum_{n<0} [x_n]\pi^n$ lies in $B^{b,+}[[\varpi_1]/\pi]$.  It also shows that for $n\geq 0$, each term $[x_n]\pi^n$ lies in $B^{b,+}[\pi/[\varpi_2]]$.   Since $x\in B^b$, there exists $C>0$ with $\abs{x_n}\leq C$ for all $n$.  Let $N$ be large enough so that  $\abs{\varpi_2}^NC\leq 1$;  then for $n\geq N$ we have $\abs{\varpi_2^Nx_n}\leq 1$.  Then the sum of the terms of $x$ of index $\geq N$ are
\[ \pi^N([x_N]+[x_{N+1}]\pi+\dots)=\frac{\pi^N}{\varpi_2^N}([\varpi^Nx_N]+[\varpi^Nx_{N+1}]\pi+\dots), \]
which lies in $B^{b,+}\left[\frac{[\varpi_1]}{\pi},\frac{\pi}{[\varpi_2]}\right]$.
\end{proof}

\begin{lemma} If $B_I^{\circ}$ is the closed unit ball in $B_I$, then  $B_I^{\circ}$ is the $\pi$-adic completion of $B^{b,\circ}_I$.
\end{lemma}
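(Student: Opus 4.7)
The plan is to compare two topologies on the closed unit ball $B_I^\circ$: the $\pi$-adic topology (basic neighborhoods $\pi^n B_I^\circ$) and the Banach topology from $\abs{\cdot}_I$. Once these are shown to agree, the lemma follows by a standard density-and-completeness argument.

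Writing $I=[\rho_1,\rho_2]$ with $\rho_i=\abs{\varpi_i}$, the first step is to establish the sandwich
\[ \set{z\in B_I : \abs{z}_I<\rho_1^n} \subset \pi^n B_I^\circ \subset \set{z\in B_I : \abs{z}_I\leq\rho_2^n}. \]
The right inclusion is immediate from $\abs{\pi^n w}_\rho=\rho^n\abs{w}_\rho\leq\rho_2^n$. For the left, note that $\pi$ is a unit in $B_I$ (since $\abs{\pi^{-1}}_\rho=\rho^{-1}$ is bounded on $I$): if $\abs{z}_I<\rho_1^n$, then $\abs{z}_\rho<\rho_1^n\leq\rho^n$ for each $\rho\in I$, so $z/\pi^n\in B_I^\circ$. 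These inclusions exhibit the two topologies as equivalent.

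Given this equivalence, $B_I^\circ$ is $\pi$-adically complete (as a closed ball in a Banach space), and $B_I^{b,\circ}$ is $\pi$-adically dense in $B_I^\circ$ (any $x\in B^b$ within $\abs{\cdot}_I$-distance $<1$ of a given $y\in B_I^\circ$ satisfies $\abs{x}_I\leq 1$). Sharpening this, for $y\in B_I^\circ$ I would choose $x_k\in B_I^{b,\circ}$ with $\abs{y-x_k}_I<\rho_1^{k+1}$, so that $x_{k+1}-x_k$ lies in $\pi^{k+1}B_I^\circ\cap B^b=\pi^{k+1}B_I^{b,\circ}$ (using the identity $B^b\cap B_I^\circ=B_I^{b,\circ}$ together with the invertibility of $\pi$). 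This realizes $y$ as the image of an element of $\varprojlim_n B_I^{b,\circ}/\pi^n$, giving surjectivity of the natural map.

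For injectivity, I would take a $\pi$-adic Cauchy sequence $(y_n)$ in $B_I^{b,\circ}$ with $y_{n+1}-y_n\in\pi^n B_I^{b,\circ}$ and $y_n\to 0$ in $B_I$. Fixing $n$ and telescoping gives $y_m-y_n\in\pi^n B_I^{b,\circ}$ for $m\geq n$; letting $m\to\infty$ in $B_I$ and using closedness of $B_I^\circ$ yields $y_n/\pi^n\in B_I^\circ$. Since also $y_n/\pi^n\in B^b$, we conclude $y_n/\pi^n\in B^b\cap B_I^\circ=B_I^{b,\circ}$, so $y_n\in\pi^n B_I^{b,\circ}$, as needed. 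The main (if modest) obstacle is this injectivity step, which rests squarely on the identity $B_I^{b,\circ}=B^b\cap B_I^\circ$ and the fact that $\pi$ is a unit in $B_I$, so that dividing by $\pi^n$ stays inside the relevant rings.
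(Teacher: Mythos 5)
Your argument is correct and is essentially the paper's proof written out in full: the sandwich $\set{\abs{z}_I<\rho_1^n}\subset\pi^nB_I^\circ\subset\set{\abs{z}_I\leq\rho_2^n}$ is exactly the verification of the paper's one-line assertion that $\abs{\;}_I$ induces the $\pi$-adic topology on $B_I^{b,\circ}$, after which the identification of the $\pi$-adic completion with the closed unit ball (your density, surjectivity and injectivity steps) is the same standard completion argument the paper leaves implicit.
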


\begin{proof}  $B_I$ is the completion of $B^b$ with respect to the norm $\abs{\;}_I$.  This norm induces the $\pi$-adic topology on $B^{b,\circ}_I$, so that the $\pi$-adic completion of $B^{b,\circ}_I$ is the same as the closed unit ball in $B_I$.
\end{proof}



The $q$th power Frobenius automorphism of $F$ induces a map $\phi\from B^{b,+}\to B^{b,+}$.  For $\rho\in (0,1)$ we have $\abs{x^\phi}_\rho = \abs{x}_{\rho^{1/q}}^q$.  As a result, $\phi$ induces an endomorphism of $B$.  We have $B^{\phi=1}=E$.  (See Prop. 7.1 of \cite{FarguesFontaineCurve}.) We put
\[ P=\bigoplus_{d\geq 0} B^{\phi=\pi^d}, \]
a graded $E$-algebra.

\begin{defn}  Let $X_{E,F}=\Proj P$.  
\end{defn}

\begin{Theorem}{\cite{FarguesFontaineCurve}, Th\'eor\`eme 10.2}
\label{XIsACurve}
\begin{enumerate}
\item $X_{F,E}$ is an integral noetherian scheme which is regular of dimension 1.
\item $H^0(X_{F,E},\OO_{X_{F,E}})=E$. ($E$ is the field of definition of $X_{F,E}$)
\item For a finite extension $E'/E$ there is a canonical isomorphism $X_{F,E'}\isom X_{F,E}\otimes_E E'$.
\end{enumerate}
\end{Theorem}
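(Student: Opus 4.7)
The plan is to realise $X_{F,E} = \Proj P$ as the union of affine opens of the form $\Spec B_{e,t}$, each a principal ideal domain, following the strategy of \cite{FarguesFontaineCurve}. The key notion is a nonzero degree-one element $t \in B^{\phi=\pi}$; for such a $t$ one computes
\[ \bigl(P[1/t]\bigr)_0 = B[1/t]^{\phi=1} =: B_{e,t}, \]
so $D_+(t) = \Spec B_{e,t}$. I would organise the proof as: (a) produce nonzero degree-one elements; (b) show each $B_{e,t}$ is a PID; (c) show that the $D_+(t)$ cover $\Proj P$; and then (d) deduce integrality, regularity, dimension one, the $H^0$ computation, and the base change.

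For (a) I would exhibit nonzero elements of $B^{\phi=\pi}$ by a Lubin--Tate construction. Fix a Lubin--Tate formal $\OO_E$-module $H$ with parameter $\pi$, and a compatible system of $\pi^n$-torsion points viewed as an element $\varepsilon \in F$ via tilting. The Lubin--Tate logarithm applied to the Teichm\"uller lift of $\varepsilon$ produces a nonzero $t_\varepsilon \in B^{\phi=\pi}$. In the cyclotomic case $E = \Q_p$ this recovers Fontaine's classical $t = \log[\varepsilon]$.

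Step (b) is the main obstacle. The essential input is a Weierstrass-type factorisation theorem in the Fr\'echet algebra $B$: every nonzero element factors, uniquely up to units, as a finite product of \emph{primitive} degree-one elements, i.e., nonzero elements of $B^{\phi=\pi}$. Each primitive $\xi$, up to scaling by $E^\times$ and multiplication by powers of $\pi$ under $\phi$, determines a characteristic-zero untilt $C^\sharp_\xi$ of $F$, with $B/\xi B \isom C^\sharp_\xi$. Combined with the identification $P_{(t)} = B_{e,t}$, one reads off that the maximal ideals of $B_{e,t}$ are principal and in bijection with $\phi$-orbits of primitive elements that are not unit multiples of $t$; a further class-group argument shows $B_{e,t}$ is a PID, not merely Dedekind. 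This analytic package---Weierstrass preparation in $B$ and the classification of primitive elements by untilts---is the technical core of \cite{FarguesFontaineCurve}, \S6--10, and is by far the hardest part.

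Granted (a)--(b), the rest follows formally. The $D_+(t)$ cover $\Proj P$ since $P_+$ is generated by $P_1 = B^{\phi=\pi}$ (Weierstrass factorisation writes any element of $B^{\phi=\pi^d}$ as a product of $d$ degree-one elements). Integrality holds because overlapping affine charts $\Spec B_{e,t}$ and $\Spec B_{e,t'}$ share the common fraction field of $B[1/(tt')]^{\phi=1}$; regularity and dimension one pass from the affine charts; noetherianness follows from covering by finitely many $D_+(t)$. For (2), the standard formula $H^0(\Proj P, \OO) = P_0$ combined with $P_0 = B^{\phi=1} = E$ (Prop.~7.1 of \cite{FarguesFontaineCurve}) gives the result. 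For (3), a direct comparison of the graded rings---reducing to the unramified and totally ramified cases separately, and in each tracking how $\phi$ and $\pi$ interact with the tensor product $\otimes_E E'$---yields a graded isomorphism whose $\Proj$ is the desired identification $X_{F,E'} \isom X_{F,E} \otimes_E E'$.
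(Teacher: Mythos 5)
A point of calibration first: the paper does not prove this statement at all --- it is imported verbatim from \cite{FarguesFontaineCurve}, Th\'eor\`eme 10.2, and used as a black box. So there is no internal argument to compare against; what you have written is an outline of the original Fargues--Fontaine proof, and it follows the right route: cover $\Proj P$ by the charts $D_+(t)=\Spec B[1/t]^{\phi=1}$ for nonzero $t\in P_1$ (your identification $(P[1/t])_0=B[1/t]^{\phi=1}$ is correct), prove each chart is a PID via Weierstrass-type factorization and the classification of primitive degree-one elements by untilts, and deduce (1)--(3). But be aware that, as a proof, this is a roadmap rather than an argument: the entire technical core you invoke (graded factoriality of $P$, primitive elements $\leftrightarrow$ untilts, $B_e$ a PID) is cited from the same source as the theorem itself, so nothing has been gained over the paper's bare citation.

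Three concrete points would need repair if you wanted this to stand on its own. (i) Producing degree-one elements from ``a compatible system of $\pi^n$-torsion points viewed as an element $\varepsilon\in F$ via tilting'' presupposes an embedding of $\hat{E}_\infty^\flat\isom\FF_q\laurentseries{t^{1/q^\infty}}$ into $F$; such embeddings exist (any pseudo-uniformizer of the perfect field $F$ has compatible $q$-power roots) but are non-canonical, and the statement actually used by Fargues--Fontaine is the bijection $B^{\phi=\pi}\isom\widetilde{H}(\OO_F)\isom\mathfrak{m}_F$: \emph{any} nonzero $\varepsilon\in\mathfrak{m}_F$ gives $t_\varepsilon\neq 0$, no torsion points of $H$ over $F$ required. (ii) Noetherianness does not follow merely from ``covering by finitely many $D_+(t)$''; you must show finitely many suffice, which uses that $V_+(t)$ is a single closed point (the untilt attached to $t$), so that two non-associated primitive elements already cover $X_{F,E}$. (iii) Part (3) is the thinnest step: for $E'/E$ totally ramified the graded pieces change from $B^{\phi=\pi^d}$ to $(B\otimes_E E')^{\phi=(\pi')^d}$, and in the unramified case $\phi$ is replaced by $\phi^h$ together with a Galois-descent identification, so ``a direct comparison of the graded rings'' is itself a nontrivial theorem in \cite{FarguesFontaineCurve}; you name the correct reduction but give no argument for either case.
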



\section{The adic curve $X_{F,E}^{\ad}$}

\subsection{Generalities on formal schemes and adic spaces}
The category of adic spaces is introduced in $\cite{HuberAdicSpaces}$.  In brief, an adic space is a topological space $X$ equipped with a sheaf of topological rings $\OO_X$, which is locally isomorphic to $\Spa(R,R^+)$.  Here $(R,R^+)$ is an ``affinoid ring'', $R^+\subset R$ is an open and integrally closed subring, and $\Spa(R,R^+)$ is the space of continuous valuations $\abs{\;}$ on $R$ with $\abs{R^+}\leq 1$.

\begin{defn} A topological ring $R$ is {\em{f-adic}} if it contains an open subring $R_0$ whose topology is generated by a finitely generated ideal $I\subset R_0$.
An {\em affinoid ring} is a pair $(R,R^+)$, where $R$ is $f$-adic and $R^+\subset R$ is an open and integrally closed subring consisting of power-bounded elements.  A morphism of affinoid algebras $(R,R^+)\to (S,S^+)$ is a continuous homomorphism $R\to S$ which sends $R^+\to S^+$.

$R$ is a {\em Tate ring} if it contains a topologically nilpotent unit.
\end{defn}

Note that if $(R,R^+)$ and $R$ is Tate, say with topologically nilpotent $\pi \in R$, then $R=R^+[1/\pi]$ and $\pi R\subset R$ is open.  Note also that if $(R,R^+)$ is an affinoid algebra over $(E,\OO_E)$, then $R$ is Tate.

It is important to note that a given affinoid ring $(R,R^+)$ does not necessarily give rise to an adic space $\Spa(R,R^+)$, because the structure sheaf on $\Spa(R,R^+)$ is not necessarily a sheaf.  Let us say that $(R,R^+)$ is {\em sheafy} if the structure presheaf on $\Spa(R,R^+)$ is a sheaf.  Huber shows $(R,R^+)$ is a sheaf when $R$ is ``strongly noetherian'', meaning that $R\tatealgebra{X_1,\dots,X_n}$ is noetherian for all $n$.  In \S2 of \cite{ScholzeWeinsteinModuli} we constructed a larger category of ``general'' adic spaces, whose objects are sheaves on the category of complete affinoid rings (this category can be given the structure of a site in an obvious way).   If $(R,R^+)$ is a (not necessarily sheafy) affinoid ring, then $\Spa(R,R^+)$ belongs to this larger category.  If $X$ is an adic space in the general sense, let us call $X$ an {\em honest adic space} if it belongs to the category of adic spaces in the sense of Huber;  {\em i.e.} if it is locally $\Spa(R,R^+)$ for a sheafy $(R,R^+)$.

%
%

Now suppose $R$ is an $\OO_E$-algebra which is complete with respect to the topology induced by a finitely generated ideal $I\subset R$ which contains $\pi$.  Then $\Spf R$ is a formal scheme over $\Spf \OO_E$, and $(R,R)$ is an affinoid ring.  One can form the (general) adic space $\Spa(R,R)$, which is fibered over the two-point space $\Spa(\OO_E,\OO_E)$.

By \cite{ScholzeWeinsteinModuli}, Prop. 2.2.1, $\Spf R\mapsto \Spa(R,R)$ extends to a functor $M\mapsto M^{\ad}$ from the category of formal schemes over $\Spf \OO_E$ locally admitting a finitely generated ideal of definition, to the category of (general) adic spaces over $\Spa(\OO_E,\OO_E)$.

\begin{defn} Let $M$ be a formal scheme over $\Spf \OO_E$ which locally admits a finitely generated ideal of definition.  The {\em adic generic fiber} $M^{\ad}_{\eta}$ is the fiber of $M^{\ad}$ over the generic point $\eta=\Spa(E,\OO_E)$ of $\Spa(\OO_E,\OO_E)$.  We will also notate this as $M^{\ad}_E$.  
\end{defn}

\begin{lemma} \label{ExhaustionByRn} Let $R$ be a flat $\OO_E$-algebra which is complete with respect to the topology induced by a finitely generated ideal $I$ containing $\pi$.  Let $f_1,\dots,f_r,\pi$ be generators for $I$.  For $n\geq 1$, let $S_n=R[f_1^n/\pi,\dots,f_r^n/\pi]^{\wedge}$ ($\pi$-adic completion).  Let $R_n=S_n[1/\pi]$, and let $R_n^+$ be the integral closure of $S_n$ in $R_n$.  There are obvious inclusions $R_{n+1}\injects R_n$ and $R_{n+1}^+\injects R_n^+$.  Then
\[ M^{\ad}_E = \varinjlim_n \Spa(R_n,R_n^+).\]
\end{lemma}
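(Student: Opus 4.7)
The plan is to exhibit $M^{\ad}_E$ as an increasing union of rational open subspaces $U_n \subseteq M^{\ad} = \Spa(R,R)$ and to identify each $U_n$ with $\Spa(R_n, R_n^+)$ as a general adic space. The candidate cover is
\[ U_n = \set{ x \in M^{\ad} : \abs{f_i(x)}^n \leq \abs{\pi(x)} \neq 0 \text{ for all } i }. \]
First I would verify that $U_n$ is rational: the ideal $(\pi, f_1^n, \dots, f_r^n)$ is open in $R$ because it contains $I^{rn}$ — any monomial of degree $rn$ in $\pi, f_1, \dots, f_r$ either involves $\pi$ or, lying purely among the $f_i$, must have some $f_i$-exponent at least $n$.

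Next I would check that the $U_n$ form an increasing open cover of $M^{\ad}_E$. A continuous valuation on $(R,R)$ with respect to the $I$-adic topology must satisfy $\abs{f_i(x)} < 1$ and $\abs{\pi(x)} < 1$ (otherwise some power $f_i^N$ or $\pi^N$ fails to become arbitrarily small); hence on $M^{\ad}_E$, where by definition $\abs{\pi(x)} > 0$, one has $\abs{f_i(x)}^n \to 0$, so $\abs{f_i(x)}^n \leq \abs{\pi(x)}$ for every $i$ once $n$ is sufficiently large. The containment $U_n \subseteq U_{n+1}$ is immediate from $\abs{f_i(x)} \leq 1$.

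The central step is to identify the affinoid ring attached to the rational subset $U_n$ with $(R_n, R_n^+)$. Following Huber's prescription for rational localization, one formally adjoins the functions $f_i^n/\pi$ to the integral subring and completes; in the present non-Tate setting, the appropriate completion is the $\pi$-adic one, producing $S_n$, and then $R_n = S_n[1/\pi]$ is the resulting Tate algebra with topologically nilpotent unit $\pi$. Taking $R_n^+$ to be the integral closure of $S_n$ inside $R_n$ is forced by the requirement that an affinoid ring have $R^+$ integrally closed in $R$. I expect this step to be the main obstacle, since in the framework of general (possibly non-sheafy) adic spaces of \cite{ScholzeWeinsteinModuli} one must verify the universal property of $\Spa(R_n, R_n^+)$ as a sheaf on complete affinoid rings, rather than appeal to structure-presheaf calculations that require sheafiness.

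Granting this identification, the ring inclusions $R_{n+1} \hookrightarrow R_n$ induce open immersions $\Spa(R_n, R_n^+) \hookrightarrow \Spa(R_{n+1}, R_{n+1}^+)$ realizing the chain $U_n \subseteq U_{n+1}$, and since $\bigcup_n U_n = M^{\ad}_E$, the directed colimit in the category of general adic spaces gives $M^{\ad}_E = \varinjlim_n \Spa(R_n, R_n^+)$.
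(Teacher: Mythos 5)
Your route—covering $M^{\ad}_E$ by the rational subsets $U_n=\set{x:\abs{f_i(x)}^n\leq\abs{\pi(x)}\neq 0}$ and identifying each with $\Spa(R_n,R_n^+)$—is genuinely different from the paper's proof, which never invokes rational subsets at all: it evaluates both sides on an arbitrary complete affinoid $(E,\OO_E)$-algebra $(T,T^+)$ and shows directly that a continuous $\OO_E$-linear map $g\from R\to T^+$ factors through some $(R_n,R_n^+)$, the key point being that each $g(f_i)$ is topologically nilpotent and $\pi T^+\subset T$ is open, so $g(f_i)^n\in\pi T^+$ for a single $n$. Your preliminary checks (openness of $(\pi,f_1^n,\dots,f_r^n)$ via the pigeonhole bound $I^{rn}\subset(\pi,f_1^n,\dots,f_r^n)$, the inclusions $U_n\subset U_{n+1}$, and the pointwise covering) are all correct. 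But as written the argument has gaps exactly where the content sits.

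First, the identification $U_n\cong\Spa(R_n,R_n^+)$ is only ``granted''. It can be filled: Huber's rational localization of $(R,R)$ at this subset is $R[1/\pi]$, topologized so that $R[f_1^n/\pi,\dots,f_r^n/\pi]$ is an open subring with the $I$-adic topology; since $f_i^{n+1}=f_i\cdot\pi\cdot(f_i^n/\pi)$ lies in $\pi R[f_1^n/\pi,\dots,f_r^n/\pi]$, that topology coincides with the $\pi$-adic one, so the completion is $S_n$ and the resulting affinoid ring is exactly $(R_n,R_n^+)$. Note also that the universal property of rational subsets is a statement about affinoid rings and continuous valuations and requires no sheafiness, so the obstacle you anticipate there is not a real one—but the verification still has to be written down. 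Second, and more seriously, the pointwise statement that the $U_n$ cover $M^{\ad}_E$ does not by itself yield $M^{\ad}_E=\varinjlim_n\Spa(R_n,R_n^+)$: in the framework of \cite{ScholzeWeinsteinModuli} this colimit is taken in sheaves on complete affinoid rings, so you must show that every morphism $\Spa(T,T^+)\to M^{\ad}_E$ factors through a \emph{single} $U_n$. This can be done via quasi-compactness of the spectral space $\Spa(T,T^+)$ combined with the universal property just mentioned, or, as the paper does, by the one-line observation that $g(f_i)^n\in\pi T^+$ for one $n$ because $\pi T^+$ is open; your outline leaves this step implicit. With these two points supplied your proof closes up; the trade-off is that the paper's functor-of-points argument is shorter and stays entirely in the language in which $M^{\ad}_E$ is defined, while your version additionally exhibits the $\Spa(R_n,R_n^+)$ as rational (hence open) subspaces of $(\Spf R)^{\ad}$, which is extra geometric information.
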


\begin{proof}
This amounts to showing that whenever $(T,T^+)$ is a complete affinoid algebra over $(E,\OO_E)$, then
\[ M^{\ad}_E(T,T^+) = \varinjlim_n \Hom((R_n,R_n^+),(T,T^+)). \]
An element of the left hand side is a continuous $\OO_E$-linear homomorphism $g\from R\to T^+$;  we need to produce a corresponding homomorphism $R_n^+\to T^+$.  We have that $g(f_i)\in T^+$ is topologically nilpotent for $i=1,\dots,r$, since $f_i$ is.  Since $\pi T^+$ is open in $T$, there exists $n\geq $ so that $g(f_i)^n\in \pi T^+$, $i=1,\dots,r$.  Thus $g$ extends to a map $R^+[f_1^n/\pi,\dots,f_r^n/\pi]\to T^+$.  Passing to the integral closure of the completion gives a map $R_n\to T^+$ as required.
\end{proof}

\begin{exmp} \label{RigidOpenDisc} The adic generic fiber of $R=\Spf \OO_E\powerseries{t}$ is the rigid open disc over $E$.
\end{exmp}


\subsection{Formal schemes with perfectoid generic fiber}
\label{PerfectoidFormalSchemes}
In this section $K$ is a perfectoid field of characteristic 0, and $\varpi\in K$ satisfies $0<\abs{\varpi}\leq \abs{p}$.   Assume that $\varpi=\varpi^{\flat}$ for some $\varpi^\flat\in \OO_{K^{\flat}}$, so that $\varpi^{1/p^n}\in \OO_K$ for all $n\geq 1$.

\begin{defn} Let $S$ be a ring in characteristic $p$.  $S$ is {\em semiperfect} if the Frobenius map $S\to S$ is surjective.  If $S$ is semiperfect, let $S^{\flat}=\varprojlim_{x\mapsto x^p} S$, a perfect topological ring.

If $R$ is a topological $\OO_K$-algebra with $R/\varpi$ semiperfect, then write $R^\flat=(R/\varpi)^\flat$, a perfect topological $\OO_{K^\flat}$-algebra.
\end{defn}

If $R$ is complete with respect to a finitely generated ideal of definition, then so is $R^\flat$.

\begin{prop} \label{TiltOfGenericFiber} Let $R$ be an $\OO_K$-algebra which is complete with respect to a finitely generated ideal of definition.  Assume that $R/\pi$ is a semiperfect ring.  Then $(\Spf R)^{\ad}_{K}$ and $(\Spf R^{\flat})^{\ad}_{K^\flat}$ are a perfectoid spaces over $K$ and $K^\flat$ respectively, so that in particular they are honest adic spaces.  There is a natural isomorphism of perfectoid spaces over $K^\flat$:
\[ (\Spf R)_{K}^{\ad,\flat}\isom (\Spf R^{\flat})_{K^{\flat}}^{\ad} \]
\end{prop}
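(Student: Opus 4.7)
The plan is to apply Lemma~\ref{ExhaustionByRn} on both sides and then invoke Scholze's tilting equivalence for perfectoid affinoids. Fix generators $f_1,\dots,f_r,\varpi$ of a finitely generated ideal of definition of $R$, and let $S_n$, $R_n$, $R_n^+$ be as in Lemma~\ref{ExhaustionByRn}, so that $(\Spf R)^{\ad}_K=\varinjlim_n \Spa(R_n,R_n^+)$. Using semiperfectness of $R/\varpi$, choose lifts $f_i^\flat\in R^\flat$ whose first coordinate is $f_i\bmod\varpi$; together with $\varpi^\flat$ these generate a finitely generated ideal of definition of $R^\flat$, and the parallel construction on the tilt yields $(\Spf R^\flat)^{\ad}_{K^\flat}=\varinjlim_n \Spa(R_n^\flat,R_n^{\flat,+})$.

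The heart of the argument is to show that each $(R_n, R_n^+)$ is a perfectoid affinoid $(K,\OO_K)$-algebra. Completeness and the Tate condition are immediate from the construction. The perfectoid axiom amounts to surjectivity of Frobenius on $R_n^+/\varpi$: modulo $\varpi$, the adjoined element $f_i^n/\varpi$ equals $(g_i^n/\varpi^{1/p})^p$, where $g_i\in R/\varpi$ is a chosen $p$-th root of $f_i\bmod\varpi$ (available by semiperfectness of $R/\varpi$) and $\varpi^{1/p}\in\OO_K$ is available by hypothesis on $K$. Combined with semiperfectness of $R/\varpi$ applied to the remaining generators, this produces $p$-th roots of all generators of $R_n^+/\varpi$ after allowing integral elements in $R_n$, proving the perfectoid property.

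To identify the tilt, I would compute $(R_n^+)^\flat=\varprojlim_{x\mapsto x^p} R_n^+/\varpi$ directly. Because $R_n^+/\varpi$ is, up to $p$-power roots and integral elements, generated over $R/\varpi$ by the $f_i^n/\varpi$, the compatible systems of $p$-power roots of these elements assemble into the $\varpi^\flat$-adic completion of $R^\flat[(f_1^\flat)^n/\varpi^\flat,\dots,(f_r^\flat)^n/\varpi^\flat]$, which is exactly $S_n^\flat$. Inverting $\varpi^\flat$ and passing to the integral closure yields $(R_n,R_n^+)^\flat\isom(R_n^\flat,R_n^{\flat,+})$, functorially in $n$. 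Taking the colimit gives the desired isomorphism; since perfectoid affinoids are automatically sheafy, both generic fibers are honest adic spaces.

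The main obstacle is the careful verification of Frobenius-surjectivity on $R_n^+/\varpi$: one must simultaneously manage $p$-th roots of the $f_i$ (via semiperfectness of $R/\varpi$) and of $\varpi$ (via the hypothesis on $K$), and check that after $\varpi$-adic completion and integral closure in $R_n$ these roots genuinely generate $R_n^+/\varpi$ as a Frobenius-image. This is the technical core where the two hypotheses of the proposition interact; once it is established, the tilting identification of the proposition is essentially formal from Scholze's equivalence between perfectoid $\OO_K$-algebras and perfect $\varpi^\flat$-adically complete $\OO_{K^\flat}$-algebras.
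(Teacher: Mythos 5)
Your overall skeleton (exhaust both generic fibers by affinoid pieces via Lemma \ref{ExhaustionByRn}, then reduce to a tilting statement for each piece) matches the paper, and your device of choosing generators of the ideal of definition of $R$ and lifting their reductions to $R^\flat$ is harmless: since $(f_i^\flat)^\sharp\equiv f_i \bmod \varpi$, the rings $R\tatealgebra{f_i^n/\varpi}$ are unchanged, so this is equivalent to the paper's choice of starting from generators of an ideal of definition of $R^\flat$ and using their sharps. The genuine problem is the step you yourself flag as ``the technical core'': the direct verification that each $(R_n,R_n^+)$ is a perfectoid affinoid in characteristic $0$, together with the direct computation of its tilt. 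First, the perfectoid condition is not just ``Frobenius surjective on $R_n^+/\varpi$'': it is a condition on $R_n^\circ$ and includes uniformity (boundedness of $R_n^\circ$), which is not automatic for $R\tatealgebra{f_i^n/\varpi}[1/\varpi]$ and which your argument never addresses. Second, exhibiting $p$-th roots of the finitely many adjoined generators $f_i^n/\varpi$ (via $g_i^n/\varpi^{1/p}$) does not give surjectivity of Frobenius on $R_n^\circ/\varpi$ or even on $R_n^+/\varpi$: the $\varpi$-adic completion, the integral closure, and general power-bounded elements contribute elements for which no $p$-th root is visible, and in characteristic $0$ there is no perfection trick to force them into the Frobenius image. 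This is precisely why the characteristic-$0$ case of \cite{ScholzePerfectoidSpaces}, Lemma 6.4(i) is not proved by a direct Frobenius computation but by first treating characteristic $p$ (where adjoining all $p$-power roots is easy, as for $S_n^{\flat,\circ}$ in the paper) and then descending through the tilting equivalence; likewise the identification $(R_n,R_n^+)^\flat\isom(R_n^\flat,R_n^{\flat,+})$, which you assert by an informal inverse-limit computation, is exactly Lemma 6.4(iii) and is not formal. As written, your proposal assumes at this point what is essentially the content of the proposition.

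The paper's route closes this gap by doing the hands-on work only in characteristic $p$ — showing $S_n^{\flat,\circ}$ is an (almost) perfectoid $\OO_{K^\flat}$-algebra, hence $R_n^\flat$ is perfectoid — and then quoting the proof of \cite{ScholzePerfectoidSpaces}, Lemma 6.4(i) (characteristic $0$ case) and Lemma 6.4(iii) to obtain both the perfectoid property of $\Spa(R_n,R_n^+)$ and the identification of its tilt. If you want to keep your structure, replace your ``direct Frobenius-surjectivity'' paragraph with an appeal to those results (or reproduce their proof, which runs through the tilting equivalence for perfectoid algebras and the approximation of elements by sharps), rather than attempting the verification from the generators alone. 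Your final remark on sheafiness is essentially fine, but note the paper also records why the full exhausted space (not just each affinoid piece) has a sheafy structure presheaf, namely that an inverse limit of sheaves is a sheaf.
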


\begin{proof}  First we show that $(\Spf R^{\flat})^{\ad}_{\eta^\flat}$ is a perfectoid space over $K^\flat$.  Let $f_1,\dots,f_r$ be generators for an ideal of definition of $R^\flat$.  Then $(\Spf R^{\flat})_{\eta^\flat}^{\ad}$ is the direct limit of subspaces $\Spa(R_n^{\flat},R_n^{\flat,+})$ as in Lemma \ref{ExhaustionByRn}.  Here $R_n^{\flat}=R^{\flat}\tatealgebra{f_i^n/\varpi^\flat}[1/\varpi^\flat]$.  Let
\[ S_n^{\flat,\circ} = R^{\flat}\tatealgebra{\left(\frac{f_1^n}{\varpi^\flat}\right)^{1/p^\infty},\cdots,
\left(\frac{f_r^n}{\varpi^\flat}\right)^{1/p^\infty}} \subset R_n^{\flat}\]
Then $S_n^{\flat,\circ}$ is a $\varpi^\flat$-adically complete flat $\OO_{K^\flat}$-algebra.  We claim that $S_n^{\flat,\circ a}$ is a perfectoid $\OO_{K^\flat}^{a}$-algebra, which is to say that the Frobenius map $S_n^{\flat,\circ}/\varpi^{1/p}\to S_n^{\flat,\circ}/\varpi$ is an almost isomorphism.  For this we refer to \cite{ScholzePerfectoidSpaces}, proof of Lemma 6.4(i) (case of characteristic $p$).  This shows that $R_n^{\flat}=S_n^{\flat,\circ}[1/\varpi^\flat]$ is a perfectoid $K^\flat$-algebra, and thus $\Spf(R_n^{\flat},R_n^{\flat,+})$ is a perfectoid affinoid, and in particular it is an honest adic space.  

To conclude that $(\Spf R^\flat)^{\ad}_{K^\flat}$ is a perfectoid space, one needs to show that its structure presheaf is a sheaf.  (I thank Kevin Buzzard for pointing out this subtlety.)  But since $(\Spf R^\flat)^{\ad}_{K^\flat}$ is the direct limit of the $\Spa(R_n^{\flat},R_n^{\flat,+})$, its structure presheaf is the inverse limit of the pushforward of the structure presheaves of the $\Spa(R_n^{\flat},R_n^{\flat,+})$, which are all sheaves.  Now we can use the fact that an arbitrary inverse limit of sheaves is again a sheaf.

We now turn to characteristic 0.  Recall the sharp map $f\mapsto f^\sharp$, which is a map of multiplicative monoids $R^\flat\to R$.  The elements $f_1^\sharp,\dots,f_r^\sharp$ generate an ideal of definition of $R$.   Then $(\Spf R)_{\eta}^{\ad}$ is the direct limit of subspaces $\Spa(R_n,R_n^+)$, where $R_n=R\tatealgebra{f_i^n/\varpi}[1/\varpi]$.  The proof of \cite{ScholzePerfectoidSpaces}, Lemma 6.4(i) (case of characteristic $0$) shows that $\Spa(R_n,R_n^+)$ is a perfectoid affinoid, and Lemma 6.4(iii) shows that the tilt of $\Spa(R_n,R_n^+)$ is $\Spa(R_n^{\flat},R_n^{\flat,+})$.
\end{proof}

\subsection{The adic spaces $Y_{F,E}^{\ad}$ and $X_{F,E}^{\ad}$}
Once again, $E/\Q_p$ is a finite extension.

A scheme $X/E$ of finite type admits a canonical analytification $X^{\ad}$:  this is a rigid space together with a morphism $X\to X^{\ad}$ of locally ringed spaces satisfying the appropriate universal property.   GAGA holds as well:  If $X$ is a projective variety, then the categories of coherent sheaves on $X$ and $X^{\ad}$ are equivalent.

The Fargues-Fontaine curve $X=X_{F,E}$, however, is not of finite type, and it is not obvious that such an analytification exists.  However, in \cite{FarguesQuelquesResultats} there appears an adic space $X^{\ad}$ defined over $E$, such that $X$ and $X^{\ad}$ satisfy a suitable formulation of GAGA.

\begin{defn} \label{DefnOfYad} Give $B^{b,+}=W_{\OO_E}(\OO_F)$ the $I$-adic topology, where $I=([\varpi],\pi)$ and $\varpi\in \OO_F$ is an element with $0<\abs{\varpi}<1$.  Note that $B^{b,+}$ is $I$-adically complete.  Let
\[Y^{\ad}=(\Spf B^{b,+})^{\ad}_{E}\backslash\set{0},\]
where ``$0$'' refers to the valuation on $B^{b,+}=W_{\OO_E}(\OO_F)$ pulled back from the $\pi$-adic valuation on $W_{\OO_E}(k_F)$.
\end{defn}

\begin{prop} As (general) adic spaces we have
\[ Y^{\ad} = \varinjlim_I \Spa(B_I,B_I^\circ). \]
\end{prop}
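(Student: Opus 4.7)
The plan is to apply Lemma \ref{ExhaustionByRn} to the formal scheme $\Spf B^{b,+}$ with ideal of definition $I=([\varpi],\pi)$, then excise the point $0$ by further rational localisations, and identify the resulting affinoids with the $\Spa(B_I,B_I^{\circ})$. Taking $f_1=[\varpi]$ and $\pi$ as generators of $I$ in Lemma \ref{ExhaustionByRn} gives
\[ (\Spf B^{b,+})^{\ad}_E = \varinjlim_n \Spa(R_n,R_n^+), \qquad R_n=B^{b,+}\tatealgebra{[\varpi]^n/\pi}[1/\pi], \]
with $R_n^+$ the integral closure of $B^{b,+}\tatealgebra{[\varpi]^n/\pi}$ in $R_n$.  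The deleted valuation $0$ is characterised by $|[\varpi]|_0=0$, and every point of $Y^{\ad}$ has $|\pi|$ and $|[\varpi]|$ both strictly positive; so its complement in $\Spa(R_n,R_n^+)$ is the ascending union of the rational subsets $U_{n,m}=\set{|\pi|^m\leq |[\varpi]|}$ for $m\geq 1$, giving $Y^{\ad}=\varinjlim_{n,m}U_{n,m}$.

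I would then restrict to the cofinal subfamily $m=p^k$ and match $U_{n,p^k}$ with $\Spa(B_{I_{n,k}},B_{I_{n,k}}^{\circ})$ for the closed interval $I_{n,k}=[|\varpi|^n,|\varpi|^{1/p^k}]$, whose upper endpoint lies in the value group of $F$ because $F$ is perfect.  The preceding lemma computes
\[ B^{b,\circ}_{I_{n,k}} = B^{b,+}\bigl[\,[\varpi]^n/\pi,\; \pi/[\varpi^{1/p^k}]\,\bigr]. \]
Since $\pi/[\varpi^{1/p^k}]$ is a $p^k$-th root of the element $\pi^{p^k}/[\varpi]$ already lying in $R_{n,p^k}^+$, integral closure places $B^{b,\circ}_{I_{n,k}}$ inside $R_{n,p^k}^+$.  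At the level of valuations, perfectness of $F$ yields $v([\varpi^{1/p^k}])=v([\varpi])^{1/p^k}$, so the Huber conditions $v([\varpi])^n\leq v(\pi)\leq v([\varpi^{1/p^k}])$ defining $\Spa(B_{I_{n,k}},B_{I_{n,k}}^{\circ})$ are identical to those cutting out $U_{n,p^k}$, and the continuous rank-one valuations in question are precisely the Gauss norms $|\cdot|_\rho$ for $\rho\in I_{n,k}$ together with their specialisations.  Taking $\pi$-adic completion of the unit balls then matches the two affinoid rings; since the intervals $I_{n,k}$ are cofinal among closed subintervals of $(0,1)$ with endpoints in the value group of $F$, passing to the colimit proves $Y^{\ad}=\varinjlim_I \Spa(B_I,B_I^{\circ})$.

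The main obstacle is the ring identification in the middle step: $R_{n,p^k}^+$ is defined by an integral closure inside a rational-subset Tate ring, whereas $B_{I_{n,k}}^{\circ}$ is defined purely by the Fr\'echet norm, and one must verify that the $\pi$-adic completion of $B^{b,+}[[\varpi]^n/\pi,\pi/[\varpi^{1/p^k}]]$ already captures every power-bounded element of $R_{n,p^k}$.  This reduces to showing that the spectral seminorm on $R_{n,p^k}$, restricted to $B^b$, coincides with $|\cdot|_{I_{n,k}}$, which is a direct computation on Teichm\"uller expansions $\sum[x_j]\pi^j$ using the parametrisation of the rank-one valuations on $U_{n,p^k}$ by $\rho\in I_{n,k}$.
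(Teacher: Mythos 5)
Your argument is correct and follows essentially the same route as the paper: apply Lemma \ref{ExhaustionByRn} to $\Spf B^{b,+}$ with the ideal $([\varpi],\pi)$, observe that off the deleted point one has $|[\varpi]|\neq 0$ so that each point lies in a rational subset cut out by $|\pi|\leq |[\varpi_2]|$ for some $\varpi_2$ with $|\varpi_2|$ in the value group of $F$, and identify that rational subset with $\Spa(B_I,B_I^{\circ})$, $I=[|\varpi_1|,|\varpi_2|]$. The ``main obstacle'' you flag is exactly what the two lemmas of \S 2.1 (the description $B^{b,\circ}_I=B^{b,+}[[\varpi_1]/\pi,\pi/[\varpi_2]]$ and the identification of $B_I^{\circ}$ with its $\pi$-adic completion) are there to supply, so your cofinal choice $\varpi_1=\varpi^n$, $\varpi_2=\varpi^{1/p^k}$ is just a more explicit version of the paper's choice of $\varpi_1,\varpi_2$ per point.
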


This shows that our definition of $Y^{\ad}$ agrees with the definition in \cite{FarguesQuelquesResultats},
D\'efinition 2.5.

\begin{proof}  By Prop. \ref{ExhaustionByRn} we have
\[ (\Spf B^{b,+})^{\ad}_{E} = \varprojlim_{\varpi_1} \Spa(R_{\varpi_1},R_{\varpi_1}^+),\]
where $\varpi_1$ runs over elements of $\OO_F$ with $0<\abs{\varpi}<1$, $R_{\varpi_1}^+$ is the $\pi$-adic completion of $B^{b,+}\left[\frac{[\varpi_1]}{\pi}\right]$, and $R_{\varpi_1}=R_{\varpi_1}^+[1/\pi]$.

By definition, $Y^{\ad}$ is the complement in $(\Spf B^{b,+})^{\ad}_{E}$ of the single valuation pulled back from the $\pi$-adic valuation on $W_{\OO_E}(k)$.  Thus if $x\in Y^{\ad}$ we have $\abs{[\varpi](x)}\neq 0$ for all $\varpi\in \OO_F\backslash\set{0}$.   Thus there exists $\varpi_2\in\OO_F$ with $\abs{\varpi_2(x)}\geq \abs{\pi(x)}$.  On the other hand we have $x\in \Spa(R_{\varpi_1},R_{\varpi_1}^+)$ for some $\varpi_1$.  Thus $x$ belongs to the rational subset $\Spa(B_I,B_I^\circ)\subset \Spa(R_{\varpi_1},R_{\varpi_1}^+)$, where $I=[\abs{\varpi_1},\abs{\varpi_2}]$.  This shows that the $\Spa(B_I,B_I^{\circ})$ cover $Y^{\ad}$.  
\end{proof}

The following conjecture is due to Fargues.

\begin{conj} \label{BINoetherian} Let $I\subset (0,1)$ be a closed interval.  The Banach algebra $B_I$ is {\em strongly noetherian}.  That is, for every $n\geq 1$, $B_I\tatealgebra{T_1,\dots,T_n}$ is noetherian.
\end{conj}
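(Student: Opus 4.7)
The plan is to imitate the classical proof that Tate algebras $E\tatealgebra{T_1,\dots,T_n}$ are noetherian, suitably adapted to the mixed-characteristic Banach algebra $B_I$. First I would reduce to the case that $I=[\abs{\varpi_1},\abs{\varpi_2}]$ has endpoints in the value group of $F$; if need be, this can be arranged by first base-changing to a larger perfectoid extension $F'\supset F$ containing appropriate Teichm\"uller representatives, and then descending along the faithfully flat map $B_I\to B_{I,F'}$. Under this assumption, the two preceding lemmas give a concrete description of $B_I^{\circ}$ as the $\pi$-adic completion of $B^{b,+}\bigl[[\varpi_1]/\pi,\pi/[\varpi_2]\bigr]$, and this is the presentation I would work with.

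The core of the argument would be a Weierstrass preparation/division theorem for $B_I\tatealgebra{T_1,\dots,T_n}$. Given a nonzero element, the aim is to use a change of variables of the form $T_i\mapsto T_i+\lambda_i$ with $\lambda_i$ in a dense subring to bring it into a form whose leading coefficient in $T_n$ is a unit in $B_I$, and then to perform polynomial division in $T_n$ with remainder of bounded degree. From such a preparation theorem, ideals in $B_I\tatealgebra{T_1,\dots,T_n}$ reduce inductively to ideals in $B_I\tatealgebra{T_1,\dots,T_{n-1}}$, so by induction on $n$ one is left with the case $n=0$, i.e.~noetherianness of $B_I$ itself. For that case, I would analyse an ideal $\mathfrak{a}\subset B_I$ by intersecting with $B_I^{\circ}$ and passing to the graded pieces of the $\pi$-adic filtration, whose structure is controlled by the concrete generators $[\varpi_1]/\pi$ and $\pi/[\varpi_2]$ together with the ring $\OO_F$.

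The hard part will be precisely this last analysis: the graded pieces $B_I^{\circ}/\pi$ are built on the non-noetherian perfect ring $\OO_F$, so Hilbert's basis theorem does not apply directly, and the argument that works for classical Tate algebras (where the residue is a polynomial ring over a field) fails verbatim. One must use that the element $[\varpi_1]/\pi$ has been adjoined using a topologically nilpotent $\varpi_1\in\OO_F$, so that the $\pi$-adic completion kills most of the pathology of $\OO_F$; making this precise, i.e.~isolating the ``noetherian model'' of $B_I^{\circ}/\pi$ hidden inside the huge semiperfect ambient ring, is the essential obstacle and is exactly the reason the statement is only a conjecture at the level of this paper. A possible workaround would be to produce a faithfully flat noetherian $E$-subalgebra $A\subset B_I$ (for example a classical affinoid built from a countable rational sublattice of the value group of $F$) such that ideals of $B_I\tatealgebra{T_1,\dots,T_n}$ are controlled by ideals of $A\tatealgebra{T_1,\dots,T_n}$, but arranging such a descent seems to require genuinely new input beyond what is developed in the excerpt.
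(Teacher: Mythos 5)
There is no proof to compare against: in the paper this statement is precisely Conjecture \ref{BINoetherian}, attributed to Fargues, and the text explicitly says it is unable to prove it. The whole architecture of the paper is designed to route around it --- honesty of $Y^{\ad}$ is obtained by base change to a perfectoid field and Scholze's results rather than by Huber's strongly noetherian criterion, and the classification of bundles needed for Prop. \ref{XGeomConnectedCharp} is done on the characteristic-$p$ curve via Hartl--Pink $\sigma$-bundles, precisely because coherent-sheaf theory on $X_{F,E}^{\ad}$ in characteristic $0$ is unavailable without the conjecture. So the relevant question is whether your outline closes the gap, and by your own admission it does not: the ``hard part'' you isolate (finding a noetherian model inside the graded pieces of the $\pi$-adic filtration on $B_I^{\circ}$, which are built from the highly non-noetherian perfect ring $\OO_F$) is exactly the open problem, not a step you reduce it to.

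Beyond that, the reduction you propose has its own serious problems. The induction ``Weierstrass division in $T_n$ reduces $B_I\tatealgebra{T_1,\dots,T_n}$ to $B_I\tatealgebra{T_1,\dots,T_{n-1}}$ and ultimately to $n=0$'' is modeled on the case where the base is a nonarchimedean \emph{field}; over a general Banach ring base, noetherianness of $B_I$ does not imply noetherianness of $B_I\tatealgebra{T_1,\dots,T_n}$ --- that failure is the very reason ``strongly noetherian'' is a separate hypothesis in Huber's theory, so the preparation step cannot be an afterthought. Moreover the preparation itself is doubtful: to divide by an element one needs its leading coefficient to be a unit of $B_I$, i.e.\ nonvanishing at every point of the spectrum of $B_I$, and this spectrum is enormous (its classical points include all untilts of $F$ with radius in $I$, modulo nothing); a translation $T_i\mapsto T_i+\lambda_i$, or even the classical substitution $T_i\mapsto T_i+T_n^{c_i}$, cannot in general make a coefficient simultaneously invertible at all such points, the way it can over a field. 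So both the inductive step and the base case require genuinely new input. (Indeed, the conjecture was settled only later, by Kedlaya, using perfectoid and Gelfand-spectrum techniques quite different from a Weierstrass-division induction --- consistent with your closing assessment, but confirming that what you have is a description of the obstruction rather than a proof.)
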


Despite not being able to prove Conj. \ref{BINoetherian}, we have the following proposition (Th\'eor\`eme 2.1 of \cite{FarguesQuelquesResultats}), which is proved by extending scalars to a perfectoid field and appealing to results of \cite{ScholzePerfectoidSpaces}.

\begin{prop} $Y^{\ad}$ is an honest adic space.
\end{prop}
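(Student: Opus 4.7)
The plan is to reduce to the perfectoid setting by base-changing to a perfectoid field, then descend. By the previous proposition, $Y^{\ad}=\varinjlim_I\Spa(B_I,B_I^\circ)$ as $I=[\abs{\varpi_1},\abs{\varpi_2}]$ ranges over closed subintervals of $(0,1)$ with endpoints in the value group of $F$. Being honest is local, so it suffices to prove that each $(B_I,B_I^\circ)$ is sheafy. Since Conj.\ \ref{BINoetherian} is not available, we cannot invoke Huber's strongly noetherian criterion directly.

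Let $K=\widehat{E(\pi^{1/p^\infty})}$, a perfectoid field of characteristic $0$ over $E$, whose uniformizer $\pi$ is of the form $(\pi^\flat)^\sharp$. Form $R=B^{b,+}\hat\otimes_{\OO_E}\OO_K$, completed in the extension of the $([\varpi],\pi)$-adic topology. Because $\OO_F$ is perfect and $\OO_K/p$ is semiperfect, the quotient $\OO_K/\pi$ is semiperfect, and hence so is $R/\pi$. Proposition \ref{TiltOfGenericFiber} then applies: $(\Spf R)^{\ad}_K$ is a perfectoid space over $K$, and removing the locus above the pulled-back $\pi$-adic valuation yields a perfectoid space $Y^{\ad}_K$, which is in particular an honest adic space. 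Each $\Spa(B_I,B_I^\circ)$ pulls back to a perfectoid affinoid $\Spa(B_{I,K},B_{I,K}^\circ)$ with $B_{I,K}:=B_I\hat\otimes_E K$, which is sheafy by Theorem 6.3 of \cite{ScholzePerfectoidSpaces}.

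It remains to descend sheafiness along $E\to K$. Given a rational covering $\{U_j\}$ of $\Spa(B_I,B_I^\circ)$, the pullback $\{U_{j,K}\}$ is a rational covering of $\Spa(B_{I,K},B_{I,K}^\circ)$, and one expects the identification $\OO(U_j)\hat\otimes_E K\isom \OO(U_{j,K})$. The \v{C}ech complex for $\{U_{j,K}\}$ is exact by sheafiness upstairs; faithful flatness of the Banach extension $K/E$ then gives exactness of the \v{C}ech complex for $\{U_j\}$, which is precisely sheafiness of $(B_I,B_I^\circ)$.

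The principal technical obstacle is this final descent step: one must verify that $(\cdot)\hat\otimes_E K$ commutes with rational localization on Banach $E$-algebras and is exact on the relevant completed modules (not merely on finitely generated ones, since we lack noetherianity). This is exactly the point handled in \cite{FarguesQuelquesResultats}, where the Banach open mapping theorem together with the almost-mathematical tilting machinery of \cite{ScholzePerfectoidSpaces} is used to compare the upstairs and downstairs \v{C}ech complexes in a controlled way.
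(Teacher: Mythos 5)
The paper itself offers no argument for this proposition: it simply quotes Th\'eor\`eme 2.1 of \cite{FarguesQuelquesResultats}, remarking that the proof goes by extending scalars to a perfectoid field and invoking \cite{ScholzePerfectoidSpaces}. Your outline follows exactly that strategy, and its first half is sound and parallels what the paper does later in its discussion of tilts: writing $Y^{\ad}=\varinjlim_I \Spa(B_I,B_I^\circ)$, base-changing to $K=\widehat{E(\pi^{1/p^\infty})}$ (which is indeed perfectoid), observing that $(W_{\OO_E}(\OO_F)\hat{\otimes}_{\OO_E}\OO_K)/\pi$ is semiperfect, and concluding from Prop.~\ref{TiltOfGenericFiber} that $Y^{\ad}\hat{\otimes}K$ is perfectoid, hence honest, with each $\Spa(B_I,B_I^\circ)$ pulling back to a perfectoid affinoid.

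The genuine gap is the descent step, which you yourself flag as the ``principal technical obstacle'' and then defer to the reference --- but that step \emph{is} the theorem; everything before it only shows $Y^{\ad}$ becomes honest after base change, which is cheap precisely because perfectoid affinoids are sheafy. ``Faithful flatness of the Banach extension $K/E$'' is not a usable tool here: the rings are not noetherian, the \v{C}ech complexes consist of completed rational localizations, and completed tensor product is not exact in general, so no flatness statement about $K$ over $E$ transports exactness from the complex over $B_{I,K}$ down to the complex over $B_I$. The argument that actually works (and is the one behind the cited theorem, and behind analogous descent statements in \cite{ScholzeWeinsteinModuli}) exploits that $E$ is discretely valued: every $E$-Banach space, in particular $K$ and each $\OO(U_{j,K})$ viewed over $\OO(U_j)$, is orthogonalizable, so $E$ is a topological direct summand of $K$ and $B_I\to B_I\hat{\otimes}_E K$ is a \emph{split} injection of Banach $B_I$-modules; then the \v{C}ech complex of $(B_I,B_I^\circ)$ for a rational covering is a direct summand of the corresponding complex for $(B_{I,K},B_{I,K}^\circ)$, and exactness descends. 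Even this requires the prior identification $\OO(U_j)\hat{\otimes}_E K\isom\OO(U_{j,K})$, which you only say ``one expects''; it must be proved (via the universal property of rational localization together with the open mapping theorem), again without noetherian hypotheses since Conj.~\ref{BINoetherian} is unavailable. As written, your proposal reduces the proposition to the content of the cited theorem rather than proving it.
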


\begin{defn} The adic space $X^{\ad}$ is the quotient of $Y^{\ad}$ by the automorphism $\phi$.
\end{defn}

\subsection{The adic Fargues-Fontaine curve in characteristic $p$}
The entire story of the Fargues-Fontaine curve can be retold when $E$ is replaced by a local field of residue field $\FF_q$.  The construction is very similar, except that $B^{b,+}=W(\OO_F)\otimes_{W(\FF_q)}
\OO_E$ must be replaced by $\OO_F\hat{\otimes}_{\FF_q}\OO_E$.  One arrives at a curve $X_{F,E}$ defined over $E$ satisfying the same properties as Thm. \ref{XIsACurve}.

One also gets spaces $X_{F,E}^{\ad}$ and $Y_{F,E}^{\ad}$.  Namely, 
\[ Y_{F,E}^{\ad}=\Spf \left(\OO_F\hat{\otimes}_{\FF_q}\OO_E\right)^{\ad}_{E}\backslash\set{0}, \]
where ``$0$'' refers to the pullback of the valuation on $\OO_F$ through the quotient map $\OO_F\hat{\otimes}_{\FF_q}\OO_E\to \OO_F$.  As before, $X_{F,E}^{\ad}$ is defined as the quotient of $Y_{F,E}^{\ad}$ by the automorphism $\phi$ coming from the Frobenius on $F$.  

Since $E\isom \FF_q\laurentseries{t}$, we have 
\[ Y_{F,E}^{\ad}=\Spf \OO_F\powerseries{t}^{\ad}_{F}\backslash\set{0}. \]
This is nothing but the punctured rigid open disc $D^{*}_F$.  We have the quotient $X_{F,E}^{\ad}=Y_{F,E}^{\ad}/\phi^\Z$.  Note that since $\phi$ does not act $F$-linearly, $X_{F,E}^{\ad}$ does not make sense as a rigid space over $F$.

\subsection{Tilts}  
Suppose once again that $E$ has characteristic 0.  Let $K$ be a perfectoid field containing $E$.  Let $Y^{\ad}_{F,E}\hat{\otimes} K$ be the strong completion of the base change to $K$ of $Y^{\ad}$.  Then
\[ Y^{\ad}_{F,E}\hat{\otimes} K = \Spf \left(W_{\OO_E}(\OO_F) \hat{\otimes}_{W(\FF_q)} \OO_K\right)^{\ad}_K
\backslash\set{0}. \]
Note that $(W_{\OO_E}(\OO_F)\hat{\otimes}_{W(\FF_q)} \OO_K)/\pi =\OO_F\otimes_{\FF_q} \OO_K/\pi$ is semiperfect, and $(W_{\OO_E}(\OO_F)\hat{\otimes}_{W(\FF_q)} \OO_K)^{\flat}=\OO_F\hat{\otimes}_{\FF_q}\OO_{K^\flat}$.  By Prop. \ref{TiltOfGenericFiber}, $Y^{\ad}_{F,E}\hat{\otimes} K$ is a perfectoid space, and
\begin{equation}
\label{TiltOfYad}
 (Y^{\ad}_{F,E}\hat{\otimes} K)^\flat \isom \Spf \left(\OO_F\hat{\otimes}_{\FF_q}\OO_{K^{\flat}}\right)^{\ad}_{K^{\flat}}\backslash\set{0}.
\end{equation}

As a special case, let $E_n$ be the field obtained by adjoining the $\pi^n$-torsion in a Lubin-Tate formal group over $E$, and let $E_\infty=\bigcup_{n\geq 1}E_n$.  Then $\hat{E}_\infty$ is a perfectoid field.  Let $L(E)$ be the imperfect field of norms for the extension $E_\infty/E$.  As a multiplicative monoid we have 
\[ L(E) = \varprojlim E_n, \]
where the inverse limit is taken with respect to the norm maps $E_{n+1}\to E_n$.  $L(E)\isom \FF_q\laurentseries{t}$ is a local field, and $\hat{E}_\infty\isom \FF_q\laurentseries{t^{1/q^\infty}}$ is the completed perfection of $L(E)$.

The following proposition follows immediately from Eq. \eqref{TiltOfYad}.
\begin{prop} \label{TiltOfYadNormField}
\[ (Y^{\ad}_{F,E}\hat{\otimes}\hat{E}_\infty)^{\flat} \isom Y^{\ad}_{F,L(E)}\hat{\otimes}\hat{E}_{\infty}^{\flat}. \]
\end{prop}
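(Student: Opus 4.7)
The plan is to derive the proposition by specializing Eq. \eqref{TiltOfYad} to the perfectoid field $K = \hat{E}_\infty$ and then recognizing the resulting right-hand side as a base change of the characteristic-$p$ curve construction.

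First I would set $K = \hat{E}_\infty$ in Eq. \eqref{TiltOfYad}. This is legitimate because $\hat{E}_\infty$ is a perfectoid field of characteristic $0$ containing $E$, as recalled in the text just before the proposition. The left-hand side then matches the left-hand side of the proposition verbatim. On the right-hand side, we have
\[
(Y^{\ad}_{F,E}\hat{\otimes}\hat{E}_\infty)^{\flat} \isom \Spf \left(\OO_F\hat{\otimes}_{\FF_q}\OO_{\hat{E}_\infty^\flat}\right)^{\ad}_{\hat{E}_\infty^\flat}\backslash\set{0},
\]
so the task reduces to identifying this space with $Y^{\ad}_{F,L(E)}\hat{\otimes}\hat{E}_\infty^\flat$.

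Next I would invoke the characteristic-$p$ description from the preceding subsection: since $L(E)$ is a local field with residue field $\FF_q$, one has
\[
Y^{\ad}_{F,L(E)} = \Spf\left(\OO_F\hat{\otimes}_{\FF_q}\OO_{L(E)}\right)^{\ad}_{L(E)}\backslash\set{0}.
\]
Base-changing along the inclusion $\OO_{L(E)}\injects \OO_{\hat{E}_\infty^\flat}$ (which realizes $\hat{E}_\infty^\flat$ as the completed perfection of $L(E)\isom \FF_q\laurentseries{t}$), and using that the completed tensor product commutes with the displayed presentation, one obtains exactly
\[
Y^{\ad}_{F,L(E)}\hat{\otimes}\hat{E}_\infty^\flat = \Spf \left(\OO_F\hat{\otimes}_{\FF_q}\OO_{\hat{E}_\infty^\flat}\right)^{\ad}_{\hat{E}_\infty^\flat}\backslash\set{0}.
\]
Comparing with the right-hand side of the specialized tilt formula gives the claimed isomorphism; compatibility with removal of the ``$0$'' valuation is clear since in both presentations it corresponds to the ideal pulled back from the quotient $\OO_F\hat{\otimes}_{\FF_q}\OO_{\hat{E}_\infty^\flat}\to \OO_F$.

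There is essentially no obstacle here beyond bookkeeping: the content was already packaged into Prop. \ref{TiltOfGenericFiber} and Eq. \eqref{TiltOfYad}. The only thing worth double-checking is that the tilt of $\hat{E}_\infty$ is indeed (the completion of) $L(E)^{\text{perf}}$ so that the two $\OO_{\hat{E}_\infty^\flat}$ appearing on each side refer to the same ring; this is the standard field-of-norms calculation recalled in the paragraph preceding the proposition.
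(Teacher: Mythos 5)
Your proposal is correct and is essentially the paper's own argument: the paper simply states that the proposition "follows immediately from Eq.\ \eqref{TiltOfYad}," i.e.\ one takes $K=\hat{E}_\infty$ there and recognizes $\Spf\bigl(\OO_F\hat{\otimes}_{\FF_q}\OO_{\hat{E}_\infty^\flat}\bigr)^{\ad}_{\hat{E}_\infty^\flat}\backslash\set{0}$ as $Y^{\ad}_{F,L(E)}\hat{\otimes}\hat{E}_\infty^\flat$ via the characteristic-$p$ description of $Y^{\ad}$ and the identification $\hat{E}_\infty^\flat\isom\FF_q\laurentseries{t^{1/q^\infty}}$ as the completed perfection of $L(E)$. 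Your write-up just makes explicit the bookkeeping the paper leaves implicit.
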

This is Thm. 2.7(2) of \cite{FarguesQuelquesResultats}.  

\subsection{Classification of vector bundles on $X_{F,E}$}
In this section we review the results of \cite{FarguesFontaineCurve} concerning the classification of vector bundles on $X_{F,E}$.  

Recall that $X_{F,E}=\Proj P$, where $P$ is the graded ring $\bigoplus_{d\geq 0} (B^+)^{\phi=\pi^d}$.  For $d\in\Z$, let $P[d]$ be the graded $P$-module obtained from $P$ by shifting degrees by $d$, and let $\OO_{X_E}(d)$ be the corresponding line bundle on $X_E$.  For $h\geq 1$, let $E_h/E$ be the unramified extension of degree $h$, let $\pi_h\from X_{F,E_h}=X_{F,E}\otimes E_h\to X_{F,E}$ be the projection.  If $(d,h)=1$, define $\OO_{X_{F,E}}(d/h)=\pi_{h*}\OO_{X_{F,E_h}}(d)$, a vector bundle on $\OO_{X_{F,E}}$ of rank $h$.  One thus obtains a vector bundle $\OO_{X_{F,E}}(\lambda)$ for any $\lambda\in\Q$, which satisfy $\OO_{X_{F,E}}(\lambda)\otimes\OO_{X_{F,E}}(\lambda')\isom \OO_{X_{F,E}}(\lambda+\lambda')$.

\begin{prop}\label{GlobalSectionsOfOlambdaOnX} Let $\lambda\in\Q$.  Then $H^0(\lambda)\neq 0$ if and only if $\lambda\geq 0$.
\end{prop}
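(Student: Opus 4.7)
The plan is to split on the sign of $\lambda$ and reduce each direction to the integer case $\lambda = d \in \Z$. Write $\lambda = d/h$ with $(d,h)=1$ and $h \geq 1$. The defining formula $\OO_{X_{F,E}}(d/h) = \pi_{h*}\OO_{X_{F,E_h}}(d)$ together with the finiteness of $\pi_h$ gives
\[ H^0(X_{F,E}, \OO(d/h)) = H^0(X_{F,E_h}, \OO(d)). \]
Hence it suffices to prove, for any finite unramified extension $E'/E$, that $H^0(X_{F,E'}, \OO(d)) \ne 0$ iff $d \geq 0$. Via the $\Proj$ construction, this cohomology is identified with $B_{E'}^{\phi_{E'} = \pi^d}$, where $\phi_{E'}$ is the $q^{[E':E]}$-power Frobenius on $F$ extended to $B_{E'}$; the goal then becomes to show that this eigenspace is nonzero precisely for $d \geq 0$.

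For the direction $\lambda \geq 0 \Rightarrow H^0 \ne 0$, I would produce explicit sections. When $d = 0$ we have $H^0(\OO) = E'$ by part (2) of Theorem~\ref{XIsACurve}. When $d > 0$, the logarithm of a Lubin--Tate formal $\OO_{E'}$-module (after possibly enlarging $F$ so that it contains the necessary torsion) yields a nonzero period $t \in B_{E'}^{\phi_{E'} = \pi}$; then $t^d$ is a nonzero element of $B_{E'}^{\phi_{E'} = \pi^d}$.

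For $\lambda < 0 \Rightarrow H^0 = 0$, suppose for contradiction $x \in B_{E'}$ satisfies $\phi_{E'}(x) = \pi^d x$ with $d < 0$ and $x \ne 0$. Writing $q' = q^{[E':E]}$, the identities $|\phi_{E'}(x)|_\rho = |x|_{\rho^{1/q'}}^{q'}$ and $|\pi^d x|_\rho = \rho^d |x|_\rho$ give the functional equation $|x|_{\rho^{1/q'}}^{q'} = \rho^d |x|_\rho$. Setting $g(u) := \log |x|_{e^u}$ for $u \in (-\infty, 0)$, this reads $g(u/q') = (du + g(u))/q'$; iterating yields $g(u/q'^n) = (ndu + g(u))/q'^n$, which tends to $0$ as $n \to \infty$. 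The function $g$ is convex in $u$: for $x \in B^b$ it is the supremum of the affine functions $u \mapsto \log|x_n| + nu$, and convexity passes to the Fr\'echet completion as a pointwise limit of finite-valued convex functions. Thus $g$ extends continuously to $u = 0$ with $g(0) = 0$, and the chord inequality applied to the segment from $(u, g(u))$ to $(0, 0)$ gives $g(u/q'^n) \leq g(u)/q'^n$. Compared with the exact formula above, this forces $ndu/q'^n \leq 0$, hence $du \leq 0$; but $u < 0$ together with $d < 0$ gives $du > 0$, a contradiction.

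The main technical point I anticipate is carefully justifying that log-convexity of $\rho \mapsto |x|_\rho$ and the boundary value $g(0^-) = 0$ both hold for general elements of the Fr\'echet completion $B$, as opposed to merely for elements of $B^b$ where these are immediate from the explicit series form. Log-convexity is preserved under pointwise limits of finite-valued convex functions, and the boundary value is pinned down by the iterated functional equation along the sequence $u/q'^n \to 0^-$ together with the fact that one-sided limits of convex functions on open intervals exist in $\R \cup \{+\infty\}$. Once these analytic preliminaries are in place, the chord inequality produces the contradiction mechanically.
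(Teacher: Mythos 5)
The paper gives no proof of this proposition at all---it is quoted from \cite{FarguesFontaineCurve} as part of the review of vector bundles---so the only comparison available is with the standard Fargues--Fontaine argument, and your route is essentially that one: reduce to integral $\lambda$ via $\pi_{h*}$, identify global sections with $\phi$-eigenvectors in $B$, exhibit Lubin--Tate periods for the nonvanishing direction, and kill negative slopes by the log-convexity of $\rho\mapsto\abs{x}_\rho$ together with the functional equation forced by $\phi(x)=\pi^d x$. The analytic core is correct as you wrote it: the iteration $g(u/q'^n)=(ndu+g(u))/q'^n$, the existence of the one-sided limit of a convex function at $0^-$, and the chord inequality do combine to force $du\leq 0$, which contradicts $d<0$, $u<0$; the convexity of $g$ for general $x\in B$ as a pointwise (locally uniform) limit of the sup-of-affine functions attached to elements of $B^b$ is also fine.

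Two steps need repair or honest citation. First, ``after possibly enlarging $F$ so that it contains the necessary torsion'' is not a legitimate move: $H^0(X_{F,E},\OO(\lambda))$ is computed from $B=B_{F,E}$, and enlarging $F$ replaces $B$ by a larger ring, so a period over the bigger field says nothing about the original curve, and you offer no descent. Fortunately no enlargement is needed: for \emph{every} perfectoid $F$ (its valuation is nontrivial by definition) the Lubin--Tate period construction gives a bijection $(B^+)^{\phi=\pi}\isom\tilde{H}_E(\OO_F)\isom\mathfrak{m}_F$, concretely $t_\epsilon=\lim_n \pi^n\log_{H_E}([\epsilon^{1/q^n}])$ for $\epsilon\in\mathfrak{m}_F\setminus\{0\}$; this existence statement is the genuinely nontrivial input of the positive direction and should be cited or proved, not waved at via torsion points (which live in characteristic $0$, not in $F$). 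Second, the identification $H^0(X_{F,E_h},\OO(d))=B_{E_h}^{\phi_{E_h}=\pi^d}$ is not formal ``via the Proj construction'': injectivity of $P_d\to H^0(\OO(d))$ is easy because $P$ is a domain, and that is all the nonvanishing direction needs (then $t^d\neq 0$ uses that $B$ is a domain, via multiplicativity of $\abs{\;}_\rho$); but the vanishing direction requires the reverse inclusion---every global section of $\OO(d)$ comes from a $\phi$-eigenvector---which is itself a theorem of Fargues--Fontaine. A related minor point: if $\FF_{q^h}\not\subseteq F$ the ring ``$B_{E_h}$'' is not available as you use it; it is cleaner to state $H^0(X_{F,E},\OO(d/h))=B^{\phi^h=\pi^d}$ and run your convexity argument verbatim with $\phi^h$ and $q^h$ in place of $\phi$ and $q$.
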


\begin{Theorem}\label{ClassificationOfVectorBundles} Every vector bundle $\mathcal{F}$ on $X_{F,E}$ is isomorphic to one of the form $\bigoplus_{i=1}^{s} \OO_{X_{F,E}}(\lambda_i)$, with $\lambda_i\in\Q$.  Furthermore, $\mathcal{F}$ determines the $\lambda_i$ up to permutation.
\end{Theorem}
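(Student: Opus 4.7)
The plan is to prove Theorem \ref{ClassificationOfVectorBundles} via the three-stage strategy of Fargues--Fontaine: develop Harder--Narasimhan formalism on $X = X_{F,E}$, classify the isoclinic (semistable) bundles, and then show that an arbitrary vector bundle decomposes into a direct sum of isoclinic pieces. Uniqueness of the multiset $\{\lambda_i\}$ is automatic from the uniqueness of the HN filtration, so the work is concentrated on existence.

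For the HN formalism, Theorem \ref{XIsACurve} already gives $X$ as a regular noetherian integral curve of dimension $1$ over $E$, so rank, degree and slope are well defined in the usual way. The construction $\OO_X(d/h) = \pi_{h*}\OO_{X_{F,E_h}}(d)$ shows that $\OO_X(\lambda)$ has rank $h$ and slope $\lambda = d/h$ when $(d,h) = 1$, and the standard Harder--Narasimhan formalism then produces for every vector bundle $\mathcal{F}$ a unique filtration $0 = \mathcal{F}_0 \subset \cdots \subset \mathcal{F}_n = \mathcal{F}$ with semistable subquotients of strictly decreasing slope.

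The technical heart is the classification of isoclinic bundles: every semistable bundle of slope $\lambda$ is isomorphic to $\OO_X(\lambda)^{\oplus r}$. Pulling back along $\pi_h \from X_{F,E_h} \to X_{F,E}$ and invoking Theorem \ref{XIsACurve}(3) reduces the case of slope $d/h$ to the case of integer slope on $X_{F,E_h}$, and twisting by a suitable line bundle reduces further to slope $0$. The remaining claim, that a semistable vector bundle of slope $0$ is free, rests on the description of $X$ as glued from an affine open whose ring of functions (obtained from $B$ by inverting a chosen degree-$1$ section and taking $\phi$-invariants) is a principal ideal domain, together with a formal disc at the distinguished closed point of $X$; combined with Proposition \ref{GlobalSectionsOfOlambdaOnX}, the semistability hypothesis pins down the lattice at the distinguished point and forces the bundle to be trivial.

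Finally, given a general $\mathcal{F}$, one must realise the decomposition $\mathcal{F} \isom \bigoplus \OO_X(\lambda_i)^{\oplus r_i}$. The HN filtration does \emph{not} in general split as short exact sequences: already on $\PP^1$ the group $\mathrm{Ext}^1(\OO, \OO(-2))$ is nonzero, yet the resulting rank-$2$ bundles still decompose. Following Fargues--Fontaine, the argument proceeds by induction on rank via \emph{modifications} at a closed point of $X$: if $\mathcal{F}$ has at least two distinct HN slopes, a carefully chosen modification at a closed point produces a bundle of the same rank whose HN polygon is strictly closer to isoclinic; iterating, applying the isoclinic classification once one lands in the isoclinic case, and then inverting the sequence of modifications recovers $\mathcal{F}$ as $\bigoplus \OO_X(\lambda_i)^{\oplus r_i}$. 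The principal obstacle is the isoclinic classification: this step is genuinely non-formal and relies on the specific ring-theoretic structure of $B$ and of the associated period rings entering the construction of $X$, rather than on any general curve-theoretic mechanism.
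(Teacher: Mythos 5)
A preliminary remark: the paper gives no proof of this theorem — the section it sits in is explicitly a review, the statement is quoted from \cite{FarguesFontaineCurve}, and the paper's own arguments only use the characteristic-$p$ analogue of Hartl--Pink (Thm. \ref{ClassificationOfSigmaBundles}). So your proposal has to be measured against the Fargues--Fontaine proof itself. Your skeleton (Harder--Narasimhan formalism, classification of isoclinic bundles, reassembly) is the right one, and uniqueness via the HN polygon is fine. Note, however, that both the statement and any proof require $F$ to be algebraically closed (as it is in the paper's application, $F=C^\flat$): for general perfectoid $F$, slope-zero semistable bundles encode nontrivial $\Gal(\overline{F}/F)$-representations and the classification fails.

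The genuine gap is at the step you yourself flag: the isoclinic classification, i.e.\ that a semistable bundle of slope $0$ is trivial. The sketch you offer — $X\setminus\set{\infty}=\Spec B_e$ with $B_e$ a principal ideal domain, a $B_{\mathrm{dR}}^+$-lattice at $\infty$, Prop. \ref{GlobalSectionsOfOlambdaOnX}, and ``semistability pins down the lattice'' — is not an argument and cannot be completed by bookkeeping of this kind: in the language of $(B_e,B_{\mathrm{dR}}^+)$-pairs this statement is essentially the Colmez--Fontaine theorem (weakly admissible implies admissible), and Fargues--Fontaine deduce it from genuinely external input, namely the surjectivity of the Gross--Hopkins (Lubin--Tate) and Drinfeld period morphisms for $p$-divisible groups (later proofs substitute Banach--Colmez spaces or diamond-theoretic arguments). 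The same hidden input reappears in your ``carefully chosen modification'' step: to know that a modification at a closed point strictly improves the HN polygon one must know which bundles arise as modifications of given ones, which is again the period-map statement. Declaring the step ``genuinely non-formal'' concedes the gap but does not fill it, so what you have is a roadmap rather than a proof. Two smaller inaccuracies: your $\PP^1$ example does not exhibit non-splitting of an HN filtration (in $0\to\OO(-2)\to\OO(-1)^{\oplus 2}\to\OO\to 0$ the subobject has the smaller slope, so this is not an HN filtration), and on $X_{F,E}$ the reassembly is easier than you suggest — once the isoclinic case is known, the HN filtration splits because the relevant $\mathrm{Ext}^1$ is an $H^1$ of bundles of positive slope, which vanishes; in Fargues--Fontaine the modification induction is used to prove the isoclinic case, not to assemble the general one from it.
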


\subsection{Classification of vector bundles on $X_{F,E}^{\ad}$}
In the absence of the noetherian condition of Conj. \ref{BINoetherian}, one does not have a good theory of coherent sheaves on $X_{F,E}^{\ad}$ when $E$ has characteristic 0.  This frustrates attempts to prove an analogue of Thm. \ref{ClassificationOfVectorBundles} for $X_{F,E}^{\ad}$.  Nonetheless, in \cite{FarguesQuelquesResultats}, Fargues gives an ad hoc notion of vector bundle on $X_{F,E}^{\ad}$, and proves a GAGA theorem relating vector bundles on $X_{F,E}$ to those on $X_{F,E}^{\ad}$.

When $E$ has characteristic $p$, however, $Y_{F,E}^{\ad}$ is isomorphic to the punctured disc $D^*=(\Spf \OO_F\powerseries{t})^{\ad}_{F}\backslash\set{0}$ over $F$.  In particular it is a rigid space, which does have a well-behaved theory of coherent sheaves.  In this case we are in the setting of \cite{HartlPinkVectorBundles}, which classifies $\phi$-equivariant vector bundles $\mathcal{F}$ on the punctured disc $D^*=Y_{F,E}^{\ad}$.  In \cite{HartlPinkVectorBundles}, these are called $\sigma$-bundles, and they admit a Dieudonn\'e-Manin classification along the lines of Thm. \ref{ClassificationOfVectorBundles}.  A $\sigma$-bundle is one and the same thing as a vector bundle on the quotient $Y_{F,E}^{\ad}/\phi^\Z=X_{F,E}^{\ad}$.  One gets a vector bundle $\OO(\lambda)$ on $X_{F,E}^{\ad}$ for every $\lambda\in\Q$.  

For the convenience of the reader we state the main definitions and constructions of \cite{HartlPinkVectorBundles}.



A {\em vector bundle} on $D^*$ is by definition a locally free coherent sheaf of $\OO_{D^*}$-modules.  The global sections functor is an equivalence between the category of vector bundles on $D^*$ and the category of finitely generated projective modules over $\OO_{D^*}(D^*)$.  
The Frobenius automorphism $x\mapsto x^q$ on $F$ induces an automorphism $\sigma\from D^*\to D^*$ (the arithmetic Frobenius).  
Note that the $F$-points of $D^*$ are $\set{x\in F\vert 0<\abs{x}<1}$, and on this set $\sigma$ acts as $x\mapsto x^{q^{-1}}$.


\begin{defn} A {\em $\sigma$-bundle} on $D^*$ is a pair $(\mathcal{F},\tau_{\mathcal{F}})$, where $\mathcal{F}$ is a vector bundle on $D^*$ and $\tau_{\mathcal{F}}$ is an isomorphism $\sigma^*\mathcal{F}\tilde{\to}\mathcal{F}$.  If $(\mathcal{F},\tau_{\mathcal{F}})$ is a $\sigma$-bundle, then a {\em global section} of $\mathcal{F}$ is a global section of $\mathcal{F}$ which is invariant under $\tau_{\mathcal{F}}$.
\end{defn}

We will refer to the pair $(\mathcal{F},\tau_{\mathcal{F}})$ simply as $\mathcal{F}$.  Then the space of global sections of $\mathcal{F}$ will be denoted $H^0(\mathcal{F})$.

The trivial vector bundle $\OO=\OO_{D^*}$ is a $\sigma$-bundle whose global sections $H^0(\OO)$ are the ring of power series $\sum_{n\in \Z} a_nt^n$ which converge on $D^*$ and which satisfy $a_n^q=a_n$ (thus $a_n\in\FF_q$).  This ring is easily seen to be the field $\FF_q\laurentseries{t}$ (see \cite{HartlPinkVectorBundles}, Prop. 2.3).

Let $n\in\Z$.  The twisting sheaf $\OO(n)$ is a $\sigma$-bundle with underlying sheaf $\OO$.  The isomorphism $\tau_{\OO(n)}$ is defined as the composition of $\tau_{\OO}\from \sigma^*\OO_{D^*}\to \OO_{D^*}$ followed by multiplication by $t^{-n}$.

For $r\geq 1$, let $[r]$ denote the morphism
\[ [r]\from D^*\to D^*,\; t\mapsto t^r. \]
Then for an integer $d$ relatively prime to $r$ we set
\[ \mathcal{F}_{d,r}=[r]_*\OO(d), \]
together with the induced isomorphism $\tau_{\mathcal{F}_{d,r}}=[r]_*\tau_{\OO(d)}$.  Then $\mathcal{F}_{d,r}$ is a $\sigma$-bundle of rank $r$.

In keeping with the notation of \cite{FarguesFontaineCurve}, let us write $\OO(d/r)=\mathcal{F}_{d,r}$ whenever $d$ and $r$ are relatively prime integers with $r\geq 1$.  Then $\OO(\lambda)$ makes sense for any $\lambda\in\Q$.  We have $\OO(\lambda)\otimes\OO(\lambda')\isom\OO(\lambda+\lambda')$, and $\OO(\lambda)^*\isom \OO(-\lambda)$.

\begin{prop}{\cite{HartlPinkVectorBundles}, Prop. 8.4} \label{GlobalSectionsOfOlambda} Let $\lambda\in\Q$.  Then $H^0(\lambda)\neq 0$ if and only if $\lambda>0$.
\end{prop}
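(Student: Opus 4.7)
The plan is to reduce to the integer case via the pushforward description $\mathcal{O}(d/r)=\mathcal{F}_{d,r}=[r]_*\mathcal{O}(d)$, and then analyze $H^0(\mathcal{O}(n))$ by unpacking the $\tau$-invariance condition directly on Laurent coefficients. The main obstacle will be matching the growth of coefficients produced by the Frobenius recursion against the convergence requirements of a holomorphic function on the punctured open disc $D^*$.

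First I would note that since $[r]\from D^*\to D^*$ (given by $t\mapsto t^r$) commutes with the arithmetic Frobenius $\sigma$, the functor $[r]_*$ commutes with $\sigma^*$, so $H^0([r]_*\mathcal{F})=H^0(\mathcal{F})$ for any $\sigma$-bundle $\mathcal{F}$ on the source $D^*$. Applied to $\mathcal{F}=\mathcal{O}(d)$, this reduces the proposition to the analogous statement for integer $n$. Unwinding the definition of $\tau_{\mathcal{O}(n)}$ as $\tau_{\mathcal{O}}$ followed by multiplication by $t^{-n}$, a section $f=\sum_{k\in\Z}a_k t^k\in\mathcal{O}(D^*)$ is $\tau$-invariant iff $\sigma^*f=t^n f$, equivalently the recursion $a_{k-n}=a_k^q$ on coefficients (and $a_{k+n}=a_k^{1/q}$ since $F$ is perfect). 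Convergence on $D^*$ demands $|a_k|r^k\to 0$ as $k\to+\infty$ for every $r<1$ and as $k\to-\infty$ for every $r>0$.

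For $n>0$, the recursion propagates initial data $(a_0,\dots,a_{n-1})$ by $a_{k_0-jn}=a_{k_0}^{q^j}$ backward and $a_{k_0+jn}=a_{k_0}^{q^{-j}}$ forward. Backward convergence $|a_{k_0}|^{q^j}r^{-jn}\to 0$ is the binding constraint and forces $|a_{k_0}|<1$; once this holds, the super-exponential decay of $|a_{k_0}|^{q^j}$ overwhelms $r^{-jn}$, while forward convergence is automatic since $|a_{k_0}|^{q^{-j}}\to 1$ is dominated by $r^{jn}\to 0$. Choosing e.g. $a_0=\varpi$ with $0<|\varpi|<1$ and $a_1=\cdots=a_{n-1}=0$ exhibits a nonzero section, so $H^0(\mathcal{O}(n))\neq 0$. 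For $n<0$, setting $m=-n>0$, the recursion becomes $a_{k+m}=a_k^q$; now the \emph{backward} direction applies $(\cdot)^{1/q}$, giving $|a_{k_0-jm}|=|a_{k_0}|^{q^{-j}}\to 1$ for any $a_{k_0}\neq 0$, which cannot beat $r^{k_0-jm}\to\infty$ for $r<1$. Hence every $a_{k_0}$ must vanish and $H^0(\mathcal{O}(n))=0$.

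The core mechanism is an asymmetry in the recursion: in the direction where one applies a positive Frobenius power $(\cdot)^{q^j}$, a coefficient with $|a|<1$ becomes super-exponentially small and trivially satisfies any polynomial growth bound, whereas in the inverse direction $(\cdot)^{q^{-j}}$, any nonzero coefficient has norm tending to $1$ and cannot suppress the geometric $r^{\mp j|n|}$ factor coming from convergence on $D^*$. This asymmetry is exactly what pins down the sign of $\lambda$ in the conclusion.
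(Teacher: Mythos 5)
Your core computation is sound, and since the paper only cites Hartl--Pink for this proposition, your direct argument is essentially the right one: the reduction $H^0(\mathcal{F}_{d,r})=H^0(\OO(d))$ via $[r]_*$ is legitimate ($\sigma$ acts on coefficients, $[r]$ on the variable, so they commute, and $\sigma$ being an automorphism makes the identification $\sigma^*[r]_*\isom[r]_*\sigma^*$ harmless), the invariance condition does unwind to $a_{k-n}=a_k^q$ with the paper's conventions, and your asymmetry analysis is correct: for $n>0$ the binding constraint forces $\abs{a_{k_0}}<1$ and then $q^j\log\abs{a_{k_0}}-jn\log\rho\to-\infty$ gives convergence, producing nonzero sections such as $\sum_{j\in\Z}\varpi^{q^{-j}}t^{jn}$; for $n<0$ any nonzero coefficient propagates backward to coefficients of norm tending to $1$ against $\rho^{k}\to\infty$, forcing $H^0=0$.

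The genuine gap is $\lambda=0$, which you never address, and which is in fact where the statement as literally quoted breaks: the proposition is a biconditional, so ``only if $\lambda>0$'' would require $H^0(\OO(0))=0$; but $\OO(0)=\OO$ and, as the paper records just before the proposition, $H^0(\OO)=\FF_q\laurentseries{t}\neq 0$. So with the conventions you (correctly) used, the true statement is $H^0(\lambda)\neq 0$ if and only if $\lambda\geq 0$, matching the schematic analogue Prop.~\ref{GlobalSectionsOfOlambdaOnX}; your write-up should either prove that version or explicitly flag the discrepancy rather than silently omitting $n=0$. This does not affect the way the proposition is used downstream: the only input to Prop.~\ref{XGeomConnectedCharp} is the vanishing $H^0(\OO(\lambda))=0$ for $\lambda<0$, which your argument does establish.
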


Hartl and Pink give a complete description of the category of $\sigma$-bundles.  In particular we have the following classification theorem of Dieudonn\'e-Manin type.

\begin{Theorem}{\cite{HartlPinkVectorBundles}, Thm. 11.1 and Cor. 11.8} \label{ClassificationOfSigmaBundles} Every $\sigma$-bundle $\mathcal{F}$ is isomorphic to one of the form $\bigoplus_{i=1}^{s} \OO(\lambda_i)$, with $\lambda_i\in\Q$.  Furthermore, $\mathcal{F}$ determines the $\lambda_i$ up to permutation.
\end{Theorem}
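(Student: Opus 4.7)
The plan is to mimic the classical Dieudonn\'e-Manin classification of $F$-isocrystals, adapted to $\sigma$-bundles on $D^*$. The essential inputs are (i) the fact that $F$ is algebraically closed and complete, which is what allows the slope-zero case to be trivialized, and (ii) the $\Hom$-vanishing $\Hom(\OO(\lambda),\OO(\mu))\isom H^0(\OO(\mu-\lambda))=0$ for $\mu<\lambda$, which is immediate from the tensor/dual identities $\OO(\lambda)\otimes\OO(\mu)\isom\OO(\lambda+\mu)$, $\OO(\lambda)^*\isom\OO(-\lambda)$ combined with Prop.~\ref{GlobalSectionsOfOlambda}.

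First I would set up a Harder-Narasimhan formalism for $\sigma$-bundles. Define $\deg\mathcal{F}$ via the invertible $\sigma$-bundle $\det\mathcal{F}$, which is necessarily isomorphic to some $\OO(d)$ with $d\in\Z$ (the rank-one case of the classification, which can be disposed of by hand: the underlying line bundle on $D^*$ is trivial, so $\tau$ is a unit in $\OO_{D^*}(D^*)^\times$, and by a coboundary adjustment one reduces $\tau$ to the form $t^{-d}$), and set $\mu(\mathcal{F})=\deg\mathcal{F}/\operatorname{rank}\mathcal{F}$. The $\Hom$-vanishing of (ii) implies that slopes of sub-$\sigma$-bundles of $\mathcal{F}$ are bounded above, and standard arguments then produce a unique filtration $0=\mathcal{F}_0\subsetneq\mathcal{F}_1\subsetneq\cdots\subsetneq\mathcal{F}_k=\mathcal{F}$ whose graded pieces are semistable with strictly decreasing slopes $\lambda_1>\cdots>\lambda_k$.

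The main technical step is to show that a semistable $\sigma$-bundle $\mathcal{G}$ of slope $\lambda$ is isomorphic to $\OO(\lambda)^{\oplus n}$. After twisting by $\OO(-\lambda)$ this reduces to the case $\lambda=0$, where one must show $\mathcal{G}$ is trivial; equivalently, produce enough $\tau_{\mathcal{G}}$-invariant global sections. In a local trivialization $\tau_{\mathcal{G}}$ is represented by $\sigma$ acting on $\OO_{D^*}^n$ followed by multiplication by a matrix $A\in\mathrm{GL}_n(\OO_{D^*}(D^*))$, and an invariant section amounts to a solution of $\sigma(s)=A^{-1}s$. Semistability of slope zero permits, after a Weierstrass-type normalization of $A$, reducing this $\sigma$-linear equation to extracting $q$-th power roots and locating zeros of polynomials with coefficients in $F$, which succeeds because $F$ is algebraically closed and complete. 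I expect this slope-zero trivialization to be the main obstacle: it is precisely where all the analytic and field-theoretic hypotheses enter, exactly as in the archetypal Dieudonn\'e-Manin argument.

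Finally, the HN filtration splits. The obstructions lie in $\operatorname{Ext}^1(\OO(\lambda_{i+1})^{\oplus n_{i+1}},\OO(\lambda_i)^{\oplus n_i})$ with $\lambda_i>\lambda_{i+1}$, which after twisting is $H^1(\OO(\mu))$ for a positive $\mu=\lambda_i-\lambda_{i+1}$. This group is computed as the cokernel of $\tau-1$ on sections of $\OO(\mu)$, and its vanishing follows from the same algebraic-closure-of-$F$ input that drives the previous step: one must solve $\sigma(s)-s=f$ for prescribed $f$, again reducing to root extraction over $F$. The filtration therefore splits and $\mathcal{F}\isom\bigoplus_i\OO(\lambda_i)^{\oplus n_i}$; uniqueness of the multiset $\{\lambda_i\}$ is then a formal consequence of (ii), which forces any isomorphism between two such direct-sum decompositions to respect the slope filtration on each side, hence to match multiplicities summand by summand.
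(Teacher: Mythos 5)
First, a point of comparison: the paper does not prove this statement at all --- Theorem \ref{ClassificationOfSigmaBundles} is imported verbatim from \cite{HartlPinkVectorBundles}, so there is no internal argument to measure you against; what you have written is a sketch of how one might reprove Hartl--Pink. Your skeleton (rank one first, a Harder--Narasimhan formalism, trivialization of semistable slope-zero objects, splitting via an $\mathrm{Ext}^1$ vanishing, uniqueness from the $\Hom$ vanishing of Prop.~\ref{GlobalSectionsOfOlambda}) is the standard Dieudonn\'e--Manin shape and is broadly consistent with the structure of such classifications. The problem is that every step where the actual content of the theorem lives is asserted rather than argued, and at least one of those assertions is doubtful as literally stated.

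Concretely: (i) ``the underlying line bundle on $D^*$ is trivial'' cannot be disposed of by hand. The field $F$ is complete and algebraically closed but in the relevant applications (e.g.\ $F=C^\flat$) not spherically complete, and by Lazard-type phenomena divisors on the open punctured disc need not be principal, so triviality of the underlying bundle is a genuine issue; moreover, even granting it, your ``coboundary adjustment'' reducing a unit $\tau$ to $t^{-d}$ means solving the twisted multiplicative difference equation $\sigma(v)/v=u$ with convergence on all of $D^*$ --- that \emph{is} the rank-one classification, not a reduction of it. (ii) The semistable slope-zero step, which you yourself identify as the main obstacle, is dismissed with ``Weierstrass-type normalization and root extraction'': but sections on $D^*$ may have infinitely many zeros accumulating at the boundary, there is no Weierstrass preparation available in this setting, and in \cite{HartlPinkVectorBundles} this step occupies the bulk of the paper via delicate successive-approximation and convergence arguments; naming the hypotheses (completeness, algebraic closedness of $F$) is not a proof that they suffice. (iii) The splitting of the filtration needs $H^1(\OO(\mu))=0$ for $\mu>0$, i.e.\ surjectivity of $\tau-1$ on sections, again a twisted difference equation on the open punctured disc requiring a real convergence argument that you neither prove nor cite; and the HN formalism itself presupposes boundedness of slopes of coherent subsheaves and a saturation step that depend on the same analytic inputs. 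So the roadmap is reasonable, but as a proof it has gaps precisely at the load-bearing points, and the rank-one ``by hand'' claim would fail as stated without the $\sigma$-structure doing real work.
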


Since $Y_{F,E}^{\ad}\isom D^*$, with $\phi$ corresponding to $\sigma$, vector bundles on $X_{F,E}^{\ad}=Y_{F,E}^{\ad}/\phi^{\Z}$ correspond to $\sigma$-bundles, and we have a similar classification of them.

\subsection{$X_{F,E}^{\ad}$ is geometrically simply connected}

In \cite{FarguesFontaineCurve}, Thm. 18.1, it is shown that the scheme $X=X_{F,E}$ is geometrically connected, which is to say that every finite \'etale cover of $X_{\overline{E}}$ is split.  We need a similar result for the adic curve $X_{F,E}^{\ad}$:

\begin{Theorem} \label{XadIsSimplyConnected} $E'\mapsto X_{F,E}^{\ad}\otimes E'$ is an equivalence between the category of finite \'etale $E$-algebras and the category of finite \'etale covers of $X_{F,E}^{\ad}$.
\end{Theorem}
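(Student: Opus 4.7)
The plan is to adapt Fargues-Fontaine's proof that the scheme $X_{F,E}$ is geometrically simply connected (\cite{FarguesFontaineCurve}, Th\'eor\`eme 18.1) to the adic setting. Full faithfulness of the functor $E'\mapsto X^{\ad}_{F,E}\otimes_E E'$ reduces to the identity $H^0(X^{\ad}_{F,E},\OO)=E$, which follows from Thm. \ref{XIsACurve}(2) via Fargues' GAGA \cite{FarguesQuelquesResultats}. The content of the theorem lies in essential surjectivity.

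Given a finite \'etale cover $f\from \tilde X\to X^{\ad}_{F,E}$, I would form the push-forward $\mathcal{A}:=f_*\OO_{\tilde X}$, a vector bundle on $X^{\ad}_{F,E}$ equipped with a finite \'etale $\OO$-algebra structure. To understand $\mathcal{A}$ I would invoke the classification of vector bundles on the adic curve: in characteristic $p$ this is Hartl-Pink (Thm. \ref{ClassificationOfSigmaBundles}); in characteristic 0 it is Fargues' ad hoc GAGA in \cite{FarguesQuelquesResultats}, which transports Thm. \ref{ClassificationOfVectorBundles} to $X^{\ad}_{F,E}$. Either way, $\mathcal{A}\isom \bigoplus_{i=1}^{r}\OO(\lambda_i)$ with $\lambda_i\in\Q$.

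The crux is to show that every $\lambda_i=0$. Because $\Hom(\OO(\mu),\OO(\lambda))=0$ whenever $\mu<\lambda$, the Harder-Narasimhan filtration $F^{\geq \lambda}\mathcal{A}:=\bigoplus_{\lambda_i\geq \lambda}\OO(\lambda_i)$ is canonical, and the multiplication on $\mathcal{A}$ automatically satisfies $F^{\geq \lambda}\cdot F^{\geq \mu}\subset F^{\geq \lambda+\mu}$. Assume for contradiction $\lambda_{\max}:=\max_i \lambda_i>0$; by Prop. \ref{GlobalSectionsOfOlambdaOnX} (resp. Prop. \ref{GlobalSectionsOfOlambda}) pick a nonzero section $s\in H^0(\OO(\lambda_{\max}))\subset H^0(F^{\geq \lambda_{\max}}\mathcal{A})$. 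Then $s^2\in H^0(F^{\geq 2\lambda_{\max}}\mathcal{A})=0$, since no slope of $\mathcal{A}$ exceeds $\lambda_{\max}<2\lambda_{\max}$. Hence $s$ is nilpotent, contradicting the reducedness of $\mathcal{A}$ forced by \'etalness over $\OO$. (One also notes that the trace pairing gives $\mathcal{A}\isom \mathcal{A}^\vee$, so the multiset $\set{\lambda_i}$ is symmetric about $0$ and vanishing of positive slopes implies vanishing of all slopes.)

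Once all slopes vanish, $\mathcal{A}\isom \OO^r$ is trivial, and $V:=H^0(X^{\ad}_{F,E},\mathcal{A})$ is an $r$-dimensional $E$-algebra; its \'etalness over $E$ follows from the \'etalness of $\mathcal{A}$ over $\OO$, and the adjunction $X^{\ad}_{F,E}\otimes_E V\to \tilde X$ is an isomorphism. The main obstacle is securing the vector bundle classification when $E$ has characteristic 0, because Conj. \ref{BINoetherian} is open; one must carefully invoke Fargues' ad hoc framework in \cite{FarguesQuelquesResultats}. A conceptually cleaner alternative is to combine Prop. \ref{TiltOfYadNormField} with the tilting equivalence to reduce the characteristic-0 case to the characteristic-$p$ case (where Hartl-Pink applies directly) and then to Galois-descend along $\Gal(\hat E_\infty/E)\isom \OO_E^\times$, at the cost of explicitly tracking Galois actions through the tilt.
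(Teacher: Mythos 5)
Your characteristic-$p$ core is essentially the paper's argument: push forward the cover, apply the Hartl--Pink classification (Thm.~\ref{ClassificationOfSigmaBundles}), kill positive slopes by a vanishing-of-sections argument (your variant --- take a nonzero $s\in H^0(\OO(\lambda_{\max}))$ and note $s^2=0$ via the HN filtration --- is an equivalent way of producing the zero divisors the paper gets from the vanishing of $\mu$ on $\OO(\lambda_1)\otimes\OO(\lambda_1)$, both resting on Prop.~\ref{GlobalSectionsOfOlambda}), and then use the trace-pairing Lemma~\ref{TracePairingLemma} to force all slopes to be zero; your ``slopes symmetric about $0$'' remark replaces the paper's computation $\left(\bigwedge^{n}\mathcal{F}\right)^{\otimes 2}\isom\OO$, hence $\sum_i\lambda_i=0$, and either works.

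The gap is in your primary route for characteristic $0$. You propose to classify $\mathcal{A}=f_*\OO_{\tilde X}$ directly on $X^{\ad}_{F,E}$ via Fargues' ``ad hoc GAGA''; but with Conj.~\ref{BINoetherian} open there is no good theory of coherent sheaves on the characteristic-$0$ adic curve, so it is not known that $f_*\OO_{\tilde X}$ is a vector bundle in the ad hoc sense of \cite{FarguesQuelquesResultats}, nor that it algebraizes so that Thm.~\ref{ClassificationOfVectorBundles} (or any HN formalism) applies to it --- this is exactly the obstruction the paper points out before Thm.~\ref{XadIsSimplyConnected}, and exactly why it does not argue this way. The paper's actual proof is your ``alternative,'' in a simpler form: prove the characteristic-$p$ statement with Hartl--Pink (Prop.~\ref{XGeomConnectedCharp}, where the coefficient field is the one isomorphic to $\FF_q\laurentseries{t}$ and the classification is available), and then, for $E$ of characteristic $0$, base change to a complete algebraically closed field $C$ and use the tilting equivalence together with $(X_{F,E}^{\ad}\hat{\otimes}C)^{\flat}\isom X_{F,L(E)}^{\ad}\hat{\otimes}C^{\flat}$ (Prop.~\ref{GeometricallySimplyConnected}). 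Because one passes directly to the geometric statement over $C$, no Galois descent along $\Gal(\hat{E}_\infty/E)\isom\OO_E^\times$ is needed at this stage; that bookkeeping only enters later, in the proof of Thm.~\ref{MainTheorem}. So the tilting argument you relegate to a closing aside must be the proof, and the GAGA-based classification you lead with should be dropped (or flagged as conditional on Conj.~\ref{BINoetherian}); as written, the main body of your proposal rests on a classification that is not available.
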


Unfortunately, Thm. \ref{XadIsSimplyConnected} cannot be deduced directly from the corresponding theorem about the scheme $X$.  Following a suggestion of Fargues, we can still mimic the proof of the simple-connectedness of $X$ in this adic context, by first translating the problem into characteristic $p$, and applying the Hartl-Pink classification of $\sigma$-bundles (Thm. \ref{ClassificationOfSigmaBundles}).

\begin{lemma}  \label{TracePairingLemma} Let $f\from Y\to X$ be a finite \'etale morphism of adic spaces of degree $d$.  Then $\mathcal{F}=f_*\OO_Y$ is a locally free $\OO_X$-module, and
\[ \left(\bigwedge^d\mathcal{F}\right)^{\otimes 2}\isom \OO_X. \]
\end{lemma}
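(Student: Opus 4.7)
The plan is to reduce the claim to a statement about the trace pairing on a finite \'etale algebra, which is the classical input. Local freeness of $\mathcal{F}=f_*\OO_Y$ of rank $d$ is immediate: working locally on $X=\Spa(R,R^+)$, the map $f$ corresponds to a finite \'etale $R$-algebra $S$, and any finite \'etale algebra is finitely presented and flat, hence projective of finite rank; the rank equals the degree $d$ of $f$ because $d$ is constant (the fibers all have $d$ geometric points). This gives the first assertion, and reduces the proof of the second to the following purely algebraic statement: if $S$ is a finite \'etale $R$-algebra which is locally free of rank $d$, then $(\bigwedge^d S)^{\otimes 2}\isom R$ canonically.

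To prove this, I would introduce the trace pairing
\[ \tr\from S\otimes_R S\to R, \qquad a\otimes b\mapsto \tr_{S/R}(m_{ab}),\]
where $m_{ab}\from S\to S$ is multiplication by $ab$. The key step is then to check that this pairing is \emph{perfect}, i.e.\ induces an isomorphism $S\xrightarrow{\sim}\Hom_R(S,R)$. This can be verified locally, and after further faithfully flat base change we may assume $S\isom R^d$ as an $R$-algebra, in which case the Gram matrix of the trace form in the standard basis is the identity and perfectness is trivial. In general, perfectness of the trace form is equivalent to the discriminant $\det(\tr(e_ie_j))\in R^\times$ for any local basis $e_1,\dots,e_d$, which is the standard characterization of \'etaleness for a finite flat extension.

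Given the perfect pairing $S\isom S^\vee$ of locally free rank-$d$ $R$-modules, taking the top exterior power produces an isomorphism
\[ \bigwedge^d S\;\isom\;\bigwedge^d S^\vee\;\isom\;\left(\bigwedge^d S\right)^\vee,\]
which is the same as an isomorphism $\left(\bigwedge^d S\right)^{\otimes 2}\isom R$, as desired. Gluing over an affinoid cover of $X$ yields $\left(\bigwedge^d\mathcal{F}\right)^{\otimes 2}\isom \OO_X$.

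The main obstacle is essentially notational rather than mathematical: one must check that the formation of $f_*\OO_Y$, the trace map, and the exterior power all behave well enough in Huber's category of adic spaces to permit the local-to-global gluing. However, since finite \'etale maps of adic spaces are defined so as to be locally of the form $\Spa(S,S^+)\to \Spa(R,R^+)$ for $S$ a finite \'etale $R$-algebra (with $S^+$ the integral closure of $R^+$ in $S$), and since both the trace pairing and the exterior power construction are compatible with localization, the classical proof transfers to this context without difficulty.
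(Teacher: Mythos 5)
Your proposal is correct and follows essentially the same route as the paper: local freeness of $f_*\OO_Y$ from flatness and finite presentation of a finite \'etale algebra, self-duality via the perfect trace pairing, and passage to the top exterior power to trivialize $\left(\bigwedge^d\mathcal{F}\right)^{\otimes 2}$. The extra details you supply (discriminant criterion, faithfully flat reduction to the split case) are fine but not needed beyond the paper's one-line appeal to perfectness of the trace form.
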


\begin{proof}  If $A$ is a ring, and if $B$ is a finite \'etale $A$-algebra, then $B$ is flat and of finite presentation, hence locally free.  Furthermore, the trace map
\[ \tr_{B/A} \from B\otimes B\to A \]
is perfect, so that $B$ is self-dual as an $A$-module.  Globalizing, we get that $\mathcal{F}$ is locally free and $\mathcal{F}\isom \mathcal{F}^*=\Hom(\mathcal{F},\OO_X)$.  Taking top exterior powers shows that $\bigwedge^d\mathcal{F}\isom \bigwedge^d\mathcal{F}^*=\left(\bigwedge^d\mathcal{F}\right)^*$, so that the tensor square of $\bigwedge^d\mathcal{F}$ is trivial.
\end{proof}

The same lemma appears in \cite{FarguesFontaineCurve}, Prop. 4.7.

Let $H_E$ be the Lubin-Tate formal $\OO_E$-module corresponding to the uniformizer $\pi$, so that $[\pi]_{H_E}(T)\equiv T^q\pmod {\pi}$.  Let $K$ be the completion of the field obtained by adjoining the torsion points of $H_E$ to $E$.  Then $K$ is a perfectoid field and $K^{\flat}=\FF_q\laurentseries{t^{1/q^\infty}}$.

\begin{prop} \label{XGeomConnectedCharp} Suppose $E$ has characteristic $p$.  Then $X_{F,E}^{\ad}$ is geometrically simply connected.  In other words, $E'\mapsto X_{F,E}^{\ad}\otimes E'$ is an equivalence between the category of finite \'etale $E$-algebras and finite \'etale covers of $X_{F,E}^{\ad}$.  
\end{prop}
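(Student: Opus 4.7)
The plan is to mirror the proof of geometric simple connectedness of the scheme $X$ from \cite{FarguesFontaineCurve} for $X_{F,E}^{\ad}$, substituting the Hartl-Pink classification of $\sigma$-bundles (Thm \ref{ClassificationOfSigmaBundles}) for the analogous statement on $X$. Since $E$ has characteristic $p$ we have $Y_{F,E}^{\ad}\isom D^*$ and $X_{F,E}^{\ad}=D^*/\phi^{\Z}$, so vector bundles on $X_{F,E}^{\ad}$ correspond to $\sigma$-bundles on $D^*$ and Hartl-Pink applies directly.

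Given a finite \'etale cover $f\from Y\to X_{F,E}^{\ad}$ of degree $d$, set $\mathcal{F}=f_*\OO_Y$. Lemma \ref{TracePairingLemma} shows that $\mathcal{F}$ is locally free of rank $d$, and (as exhibited in its proof) the trace pairing gives a self-duality $\mathcal{F}\isom \mathcal{F}^*$. Apply Thm \ref{ClassificationOfSigmaBundles} to write $\mathcal{F}\isom \bigoplus_{i=1}^d \OO(\lambda_i)$ for some $\lambda_i\in\Q$; the isomorphism $\mathcal{F}\isom \mathcal{F}^*\isom \bigoplus \OO(-\lambda_i)$ combined with uniqueness of slopes forces $\{\lambda_i\}=\{-\lambda_i\}$ as multisets.

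The heart of the argument is the claim that all $\lambda_i=0$. Let $\lambda_{\max}=\max_i \lambda_i$ and suppose $\lambda_{\max}>0$ for a contradiction. The ring multiplication $m\from\mathcal{F}\otimes\mathcal{F}\to\mathcal{F}$, restricted to $\OO(\lambda_{\max})\otimes\OO(\lambda_{\max})\isom \OO(2\lambda_{\max})$, produces a map into $\bigoplus\OO(\lambda_i)$ whose $i$-th component lies in $\Hom(\OO(2\lambda_{\max}),\OO(\lambda_i))=H^0(\OO(\lambda_i-2\lambda_{\max}))$. Since $\lambda_i-2\lambda_{\max}\leq -\lambda_{\max}<0$ for every $i$, Prop \ref{GlobalSectionsOfOlambda} makes each such Hom vanish, so $m$ restricts to zero on the summand $\OO(\lambda_{\max})$. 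Thus every local section $e$ of $\OO(\lambda_{\max})\subset \mathcal{F}=f_*\OO_Y$ satisfies $e^2=0$; but $\OO_Y$ is reduced (since $f$ is \'etale over the reduced space $X_{F,E}^{\ad}$), forcing $e=0$ and contradicting $\OO(\lambda_{\max})\neq 0$. Hence $\lambda_{\max}\leq 0$, and combined with the symmetry $\{\lambda_i\}=\{-\lambda_i\}$, all $\lambda_i=0$.

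Therefore $\mathcal{F}\isom \OO^d$, and a trivialization identifies $\mathcal{F}$ with $A\otimes_E \OO$ as sheaves of $\OO$-algebras, where $A:=H^0(Y,\OO_Y)=H^0(X_{F,E}^{\ad},\mathcal{F})$ is a $d$-dimensional $E$-algebra (using $H^0(\OO)=E$). The \'etaleness of $f$ then forces $A$ to be finite \'etale of degree $d$ over $E$ (by descent or by non-degeneracy of the induced trace form), and $Y\isom \Spec A\times_{\Spec E} X_{F,E}^{\ad}$, yielding essential surjectivity of $E'\mapsto X_{F,E}^{\ad}\otimes_E E'$. Full faithfulness follows in the standard way from $H^0(X_{F,E}^{\ad},\OO)=E$. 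The most delicate point is the reduced-ness step in the slope bound: one must carefully identify sections of the summand $\OO(\lambda_{\max})$ with honest sections of the \'etale algebra $\OO_Y$ so that $e^2=0$ really forces $e=0$.
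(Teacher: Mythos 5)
Your proof is correct and follows the same core strategy as the paper: push forward the structure sheaf, decompose it via the Hartl--Pink classification, kill positive slopes using the algebra multiplication together with the vanishing $H^0(\OO(\mu))=0$ for $\mu<0$, and use the trace pairing of Lemma \ref{TracePairingLemma} to force all slopes to be zero. The differences are minor but worth noting. First, you deduce the slope constraint from the self-duality $\mathcal{F}\isom\mathcal{F}^*$ plus uniqueness of the slope decomposition, getting $\set{\lambda_i}=\set{-\lambda_i}$, whereas the paper only extracts $\bigl(\bigwedge^n\mathcal{F}\bigr)^{\otimes 2}\isom\OO$ and hence $\sum_i\lambda_i=0$; both come out of the same lemma and both suffice once $\lambda_i\leq 0$. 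Second, your contradiction uses reducedness of $Y$ ($e^2=0$ forces $e=0$), while the paper first reduces to geometrically irreducible covers and phrases the contradiction via zero divisors in $H^0(Y,\OO_Y)$; your variant is fine since a finite \'etale cover of the (reduced) rigid space $X_{F,E}^{\ad}$ is reduced, and it lets you skip the paper's initial d\'evissage to irreducible covers entirely: you instead conclude directly that $\mathcal{F}\isom\OO^n$, identify $\mathcal{F}\isom A\otimes_E\OO$ as algebra sheaves with $A=H^0(Y,\OO_Y)$, and check $A$ is \'etale over $E$ via the trace form, which gives essential surjectivity without the irreducibility bookkeeping (the paper instead finishes by noting $E'\otimes_E E''$ is a field for geometrically irreducible $Y$). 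One small technical point the paper handles that you elide: it first pulls back along $[d]\from D^*\to D^*$ to make the slopes integral, since for fractional slopes $\OO(\lambda)\otimes\OO(\lambda)$ is a direct sum of copies of $\OO(2\lambda)$ rather than a single copy; this does not affect your vanishing argument (every summand $\Hom$-bundle still has negative slope), but you should either insert the same reduction or phrase the tensor computation accordingly.
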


\begin{proof}  It suffices to show that if $f\from Y\to X_{F,E}^{\ad}$ is a finite \'etale cover of degree $n$ with $Y$ geometrically irreducible, then $n=1$.  Given such a cover, let $\mathcal{F}=f_*\OO_Y$.   This is a sheaf of $\OO_{X_{F,E}^{\ad}}$-algebras, so we have a multiplication morphism $\mu\from \mathcal{F}\otimes\mathcal{F}\to\mathcal{F}$.  

Consider $\mathcal{F}$ as a $\sigma$-bundle.   By Thm. \ref{ClassificationOfSigmaBundles}, $\mathcal{F}\isom \bigoplus_{i=1}^n \OO(\lambda_i)$, for a collection of slopes $\lambda_i\in \Q$ (possibly with multiplicity).  Assume that $\lambda_1\geq \cdots \geq \lambda_n$.

The proof now follows that of \cite{FarguesFontaineCurve}, Th\'eor\`eme 18.1.  We claim that $\lambda_1\leq 0$.  Assume otherwise, so that $\lambda_1>0$.   After pulling back $\mathcal{F}$ through some $[d]\from D^{*}\to D^{*}$, we may assume that $\lambda_i\in\Z$ for $i=1,\dots,s$ 
(see \cite{HartlPinkVectorBundles}, Prop. 7.1(a)).    
The restriction of $\mu$ to $\OO(\lambda_1)\otimes\OO(\lambda_1)$ is the direct sum of morphisms
\[ \mu_{1,1,k}\from \OO(\lambda_1)\otimes\OO(\lambda_1) \to \OO(\lambda_k)\]
for $k=1,\dots,n$.  The morphism $\mu_{1,1,k}$ is tantamount to a global section of  
\[ \Hom(\OO(\lambda_1)^{\otimes 2},\OO(\lambda_k))=\OO(\lambda_k-2\lambda_1).\]  But since $\lambda_k-2\lambda_1<0$, Prop. \ref{GlobalSectionsOfOlambda} shows that $H^0(\OO(\lambda_k-2\lambda_1))=0$.  Thus the multiplication map $\OO(\lambda_1)\otimes\OO(\lambda_1)\to 0$ is 0.  This means that $H^0(Y,\OO_Y)$ contains zero divisors, which is a contradiction because $Y$ is irreducible.

Thus $\lambda_1\leq 0$, and thus $\lambda_i\leq 0$ for all $i$.  By Lemma \ref{TracePairingLemma}, $\left(\bigwedge^n\mathcal{F}\right)^{\otimes 2} \isom \OO_{D^*}$, from which we deduce $\sum_{i=1}^n\lambda_i=0$.  This shows that $\lambda_i=0$ for all $i$, and therefore $\mathcal{F}\isom\OO_X^n$.  We find that $E'=H^0(Y,\OO_Y)$ is an \'etale $E$-algebra of degree $n$.  Since $Y$ is geometrically irreducible, $E'\otimes_E E''$ must be a field for every separable field extension $E''/E$, which implies that $E'=E$ and $n=1$.  
\end{proof}

\begin{prop}  \label{GeometricallySimplyConnected} Suppose $E$ has characteristic 0.  Then $X_{F,E}^{\ad}$ is geometrically simply connected.
\end{prop}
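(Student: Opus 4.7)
My plan is to follow the roadmap from the introduction: tilt the base change $X^{\ad}_{F,E}\hat{\otimes} K$ to characteristic $p$, apply the characteristic $p$ result (Prop. \ref{XGeomConnectedCharp}), and descend along the Lubin-Tate extension $K/E$.

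First, by Prop. \ref{TiltOfYadNormField} we have
\[ (Y^{\ad}_{F,E}\hat{\otimes} K)^\flat \isom Y^{\ad}_{F,L(E)}\hat{\otimes} K^\flat. \]
The Frobenius $\phi$ is induced by the $q$-power Frobenius on $F$, hence is preserved by tilting (possibly up to absolute Frobenius). Taking the quotient by $\phi^{\Z}$ on both sides gives
\[ (X^{\ad}_{F,E}\hat{\otimes} K)^\flat \isom X^{\ad}_{F,L(E)}\hat{\otimes} K^\flat. \]
Scholze's tilting equivalence then identifies finite \'etale covers of the two sides, and also identifies finite \'etale $K$-algebras with finite \'etale $K^\flat$-algebras.

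Next, I would show that finite \'etale covers of $X^{\ad}_{F,L(E)}\hat{\otimes} K^\flat$ correspond to finite \'etale $K^\flat$-algebras by re-running the proof of Prop. \ref{XGeomConnectedCharp} after base change to $K^\flat$: the Hartl-Pink classification of $\sigma$-bundles (Thm. \ref{ClassificationOfSigmaBundles}), the vanishing $H^0(\OO(\lambda))=0$ for $\lambda<0$ (Prop. \ref{GlobalSectionsOfOlambda}), and the trace-pairing argument (Lemma \ref{TracePairingLemma}) all go through essentially verbatim, forcing the slopes of the pushforward $\sigma$-bundle to vanish and presenting the cover as a finite \'etale $K^\flat$-algebra. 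Combined with the tilt identification above, finite \'etale covers of $X^{\ad}_{F,E}\hat{\otimes} K$ are equivalent to finite \'etale $K$-algebras.

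Finally, I would descend along $K/E$, which is Galois with group $\Gal(K/E)\isom \OO_E^\times$ via Lubin-Tate theory. This action extends naturally to $X^{\ad}_{F,E}\hat{\otimes} K$ (through the second factor) and to both sides of the equivalence of the previous step. Faithfully flat Galois descent then identifies finite \'etale covers of $X^{\ad}_{F,E}$ with $\Gal(K/E)$-equivariant finite \'etale covers of $X^{\ad}_{F,E}\hat{\otimes} K$, which in turn correspond to $\Gal(K/E)$-equivariant finite \'etale $K$-algebras, and hence to finite \'etale $E$-algebras. The hardest step I expect is this last one: one must justify effective descent for finite \'etale morphisms along the infinite perfectoid tower $X^{\ad}_{F,E}\hat{\otimes} K \to X^{\ad}_{F,E}$ in the category of (``general'', not necessarily honest) adic spaces. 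A secondary subtlety is to verify that the Hartl-Pink machinery is robust under the non-algebraic base change from $L(E)$ to $K^\flat$, since the classification theorem is formulated over the base disc; the key point is that the $\sigma$-invariants of $\OO$ remain $\FF_q\laurentseries{t}$ even after this base change, so the slope decomposition still controls the cover.
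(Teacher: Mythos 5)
Your overall strategy (tilt to characteristic $p$ and invoke Prop.~\ref{XGeomConnectedCharp}) is the right one, but the route you take through $K=\hat{E}_\infty$ creates two substantial obligations that you flag but do not discharge, and the paper's proof shows they are unnecessary. The statement to prove is \emph{geometric}: one must show that finite \'etale covers of $X_{F,E}^{\ad}\hat{\otimes}C$ are split for a complete algebraically closed $C$ containing $E$. The paper therefore tilts at the level of $C$ itself: covers of $X_{F,E}^{\ad}\hat{\otimes}C$ are equivalent to covers of $(X_{F,E}^{\ad}\hat{\otimes}C)^{\flat}\isom X_{F,L(E)}^{\ad}\hat{\otimes}C^{\flat}$, and since $C^{\flat}$ is again complete and algebraically closed, the characteristic-$p$ result applies as stated and the proof ends there --- no descent of any kind, and no need to rework the Hartl--Pink input over a new coefficient field.

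By tilting only over $K=\hat{E}_\infty$ you are forced into (a) extending Prop.~\ref{XGeomConnectedCharp} to the coefficient field $K^{\flat}\isom\FF_q\laurentseries{t^{1/q^\infty}}$, which is neither a local field nor algebraically closed, so the proposition does not apply as stated and ``re-running'' the slope argument is not verbatim (you acknowledge this but give no argument); and (b) descending finite \'etale covers along $X_{F,E}^{\ad}\hat{\otimes}K\to X_{F,E}^{\ad}$, where $K/E$ is a \emph{completed infinite} extension, so ordinary faithfully flat Galois descent is not available --- one needs an approximation statement (covers of $X\hat{\otimes}K$ arise from finite levels $X\otimes E_n$) of exactly the kind this paper is structured to avoid, and you name it as the hardest step without proving it. Moreover, even granting (a) and (b), what you would obtain is the equivalence of covers of $X_{F,E}^{\ad}$ with finite \'etale $E$-algebras (essentially Thm.~\ref{XadIsSimplyConnected}), which still does not yield the stated geometric simple connectedness without yet another descent-to-finite-level argument for covers of $X_{F,E}^{\ad}\hat{\otimes}C$. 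The repair is simply to take the auxiliary field to be $C$ (algebraically closed) rather than $\hat{E}_\infty$; then both gaps evaporate and your first paragraph already essentially is the paper's proof.
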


\begin{proof}  Let $C$ be a complete algebraically closed field containing $E$.  We want to show that $X_{F,E}^{\ad}\hat{\otimes} C$ admits no nontrivial finite \'etale covers.  Such covers are equivalent to covers of its tilt, which is
\[ (X_{F,E}^{\ad}\hat{\otimes} C)^{\flat} \isom X_{F,L(E)}^{\ad}\hat{\otimes} C^{\flat}.\]
The latter is simply connected, by Prop. \ref{XGeomConnectedCharp} (note that $C^{\flat}$ is algebraically closed), which shows there are no nontrivial covers.
\end{proof}



\section{Proof of the main theorem}
As before, $H_E$ is the Lubin-Tate formal $\OO_E$-module attached to the uniformizer $\pi$.  Let us recall the construction of $H_E$:  choose a power series $f(T)\in T\OO_E\powerseries{T}$ with $f(T)\equiv T^q\pmod{\pi}$.  Then $H_E$ is the unique formal $\OO_E$-module satisfying $[\pi]_{H_E}(T)=f(T)$.   Let $t=t_1,t_2,\dots$ be a compatible family of roots of $f(T),f(f(T)),\dots$, and let $E_n=E(t_n)$.  For each $n\geq 1$, $H_E[\pi^n]$ is a free $(\OO_E/\pi^n)$-module of rank 1, and the action of Galois induces an isomorphism $\Gal(E_n/E)\isom (\OO_E/\pi^n)^\times$.

Let $H_{E,0}=H_E\otimes_{\OO_E}\FF_q$.  

\begin{lemma}   \label{H0pi} For all $n\geq 2$ we have an isomorphism $H_{E,0}[\pi^{n-1}]\isom \Spec \OO_{E_n}/t$.  The inclusion $H_{E,0}[\pi^{n-1}]\to H_{E,0}[\pi^{n}]$ corresponds to the $q$th power Frobenius map $\Frob_q\from\OO_{E_{n+1}}/t \to \OO_{E_n}/t$.  
\end{lemma}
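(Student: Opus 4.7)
The plan is to realize both $H_{E,0}[\pi^{n-1}]$ and $\Spec \OO_{E_n}/t$ explicitly as the truncated polynomial ring $\FF_q[T]/(T^{q^{n-1}})$ via the assignment $T \leftrightarrow t_n$, and then to identify the transition map with the Frobenius using the defining relations $f(t_{k+1}) = t_k$.

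For the left side, since $[\pi]_{H_E}(T) = f(T) \equiv T^q \pmod \pi$, iterating yields $[\pi^{n-1}]_{H_E}(T) \equiv T^{q^{n-1}} \pmod \pi$, so
\[ H_{E,0}[\pi^{n-1}] = \Spec \FF_q[T]/T^{q^{n-1}}. \]
For the right side, $E_n/E$ is totally ramified of degree $q^{n-1}(q-1)$ with uniformizer $t_n$ and residue field $\FF_q$; since each $t_k$ is a uniformizer of $E_k$, multiplicativity of ramification indices gives $v_n(t_k) = q^{n-k}$. In particular $v_n(t) = q^{n-1}$, whereas $v_n(\pi) = q^{n-1}(q-1) \geq q^{n-1}$, so $\pi\in t\OO_{E_n}$; thus $\OO_{E_n}/t$ is naturally an $\FF_q$-algebra, and $(t) = (t_n^{q^{n-1}})$ makes $\OO_{E_n}/t = \OO_{E_n}/t_n^{q^{n-1}}$ an $\FF_q$-vector space of dimension $q^{n-1}$ with basis $1, t_n, \dots, t_n^{q^{n-1}-1}$.

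The next step is to define $\phi_n\from \FF_q[T]/T^{q^{n-1}} \to \OO_{E_n}/t$ by $T \mapsto t_n$. To justify it, I would combine $f(t_{k+1}) = t_k$ with $f(T) \equiv T^q \pmod \pi$ to deduce $t_{k+1}^q \equiv t_k \pmod \pi$; reducing modulo $t$, where $\pi$ vanishes, yields the key relation
\[ t_{k+1}^q = t_k \quad\text{in } \OO_{E_n}/t,\quad 1 \leq k \leq n-1. \]
Iterating gives $t_n^{q^{n-1}} = t_1 = 0$, so $\phi_n$ is well-defined; surjectivity follows from $\OO_{E_n} = \OO_E[t_n]$ together with $\pi = 0$ in $\OO_{E_n}/t$, and the equality of $\FF_q$-dimensions then forces $\phi_n$ to be an isomorphism.

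For the second assertion, the closed immersion $H_{E,0}[\pi^{n-1}] \hookrightarrow H_{E,0}[\pi^n]$ dualizes to the canonical surjection $\FF_q[T]/T^{q^n} \twoheadrightarrow \FF_q[T]/T^{q^{n-1}}$, $T \mapsto T$; transporting along $\phi_{n+1}$ and $\phi_n$ gives the ring map $\OO_{E_{n+1}}/t \to \OO_{E_n}/t$ sending $t_{n+1} \mapsto t_n$. The key relation above (at $k = n$) already says $t_{n+1}^q = t_n$ in $\OO_{E_{n+1}}/t$, so this map is literally $x \mapsto x^q$, i.e., $\Frob_q$. The only technical point requiring real care is the inequality $v_n(\pi) \geq v_n(t)$ for $n \geq 2$, which amounts to the elementary $q - 1 \geq 1$; once that is in hand, the rest is bookkeeping in the Lubin--Tate tower.
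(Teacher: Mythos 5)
Your proof is correct and follows essentially the same route as the paper: both identify $H_{E,0}[\pi^{n-1}]$ and $\Spec \OO_{E_n}/t$ with $\Spec \FF_q[T]/T^{q^{n-1}}$ via $T\mapsto t_n$, and both deduce the Frobenius claim from the congruence $f(T)\equiv T^q\pmod{\pi}$ applied to the compatible system $f(t_{k+1})=t_k$. The only immaterial difference is that the paper reads off $\OO_{E_n}/t\isom \FF_q[T]/T^{q^{n-1}}$ directly from the presentation $\OO_{E_n}=\OO_{E_1}[T]/(f^{(n-1)}(T)-t)$, whereas you obtain it from the valuation computation $v_n(t)=q^{n-1}$, $v_n(\pi)=q^{n-1}(q-1)$ together with a surjectivity-plus-dimension count.
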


\begin{proof} We have $H_{E,0}[\pi^{n-1}]=\Spec \FF_q[T]/f^{(n-1)}(T)=\Spec \FF_q[T]/T^{q^{n-1}}$.  Meanwhile $\OO_{E_n}=\OO_{E_1}[T]/(f^{(n-1)}(T)-t)$, so that $\OO_{E_n}/t=\FF_q[T]/T^{q-1}$.  Thus $H_{E,0}[\pi^{n-1}]\isom \Spec \OO_{E_n}/t$.  The second claim in the lemma follows from $[\pi]_{H_{E,0}}(T)=T^q$.  
\end{proof}

Let $\tilde{H}_E$ be the {\em universal cover}:
\[ \tilde{H}_E=\varprojlim_\pi H_E.\]
Then $\tilde{H}_E$ is an $E$-vector space object in the category of formal schemes over $\OO_E$.  We will call such an object a {\em formal $E$-vector spaces}.   

Let $\tilde{H}_{E,0}=\tilde{H}_E\otimes_{\OO_E}\FF_q$, a formal $E$-vector space over $\FF_q$.  Since $\tilde{H}_{E,0}=\Spf \FF_q\powerseries{T}$ and $[\pi]_{H_{E,0}}(T)=T^q$, we have
\begin{eqnarray*}
\tilde{H}_{E,0}&=&\varprojlim_{\pi} \Spf \FF_q\powerseries{T} \\
&=&\Spf \left(\varinjlim_{T\mapsto T^q} \FF_q\powerseries{T}\right)^{\wedge}\\ 
&=&\Spf \FF_q\powerseries{T^{1/q^\infty}}. 
\end{eqnarray*}
In fact we also have $\tilde{H}_E=\Spf \OO_E\powerseries{T^{1/q^\infty}}$, see \cite{WeinsteinStableReduction}, Prop. 2.4.2(2).




\begin{lemma} 
\label{tildeHE0} We have an isomorphism of formal $E$-vector spaces over $\FF_q$:
\[ \tilde{H}_{E,0}=\varinjlim_\pi \varprojlim_n H_{E,0}[\pi^n]. \]
\end{lemma}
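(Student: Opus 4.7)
The plan is to identify both sides by viewing $M := \varprojlim_n H_{E,0}[\pi^n]$ as a closed formal subscheme of $\tilde{H}_{E,0}$ and exhibiting $\tilde{H}_{E,0}$ as the filtered union of the translates $\pi^{-k} M$.

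First I would use $[\pi]_{H_{E,0}}(T) = T^q$ to identify $H_{E,0}[\pi^n] = \Spec \FF_q[T]/T^{q^n}$, with the transition map $[\pi]\colon H_{E,0}[\pi^{n+1}] \to H_{E,0}[\pi^n]$ given on coordinate rings by $T \mapsto T^q$. Taking the direct limit of rings identifies $M = \Spf A_0$, where $A_0 = \FF_q[T^{1/q^\infty}]/T^q$ and $T^{1/q^{n-1}}$ is the image of the generator at stage $n$. Under the identification $\tilde{H}_{E,0} = \Spf R$ with $R = \FF_q\powerseries{T^{1/q^\infty}}$ from the calculation preceding the lemma, $M$ is the closed formal subscheme cut out by $T^q$.

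Next I would compute the translates $\pi^{-k}M$ inside $\tilde{H}_{E,0}$. Since $\tilde{H}_{E,0}$ is a formal $E$-vector space, $\pi$ acts invertibly; on $R$ the action of $\pi$ is the Frobenius $T \mapsto T^q$, and $\pi^{-k}$ is the inverse Frobenius $T \mapsto T^{1/q^k}$. A routine ideal calculation gives $\pi^{-k} M = V(T^{q^{k+1}}) = \Spf(R/T^{q^{k+1}})$, and the natural closed immersions $\pi^{-k}M \hookrightarrow \pi^{-(k+1)}M$ correspond to the quotient maps $R/T^{q^{k+2}} \twoheadrightarrow R/T^{q^{k+1}}$. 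Since $R$ is by definition the $T$-adic completion $\varprojlim_k R/T^{q^{k+1}}$, the formal-scheme colimit $\varinjlim_k \pi^{-k} M$ is exactly $\Spf R = \tilde{H}_{E,0}$. Finally, the isomorphism $\pi^{-k}\colon M \xrightarrow{\sim} \pi^{-k}M$ (well-defined since $\pi$ is invertible on $\tilde{H}_{E,0}$) identifies the direct system $M \xrightarrow{\pi} M \xrightarrow{\pi} \cdots$ with the system of closed immersions $\pi^{-k} M \hookrightarrow \pi^{-(k+1)} M$, yielding $\varinjlim_\pi M = \tilde{H}_{E,0}$.

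The main subtlety is that on points of a general adic $\FF_q$-algebra $S$, the pointwise union $\bigcup_k \pi^{-k} M(S) \subset \tilde{H}_{E,0}(S)$ is strictly smaller (missing those points whose first coordinate is topologically nilpotent but not actually nilpotent in $S$). The lemma is therefore an equality of formal $E$-vector spaces and must be verified via coordinate rings rather than via the naive set-theoretic union on points.
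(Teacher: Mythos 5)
Your proof is correct and is essentially the paper's own computation in different packaging: the paper writes down the natural map and identifies the coordinate ring of $\varinjlim_\pi\varprojlim_n H_{E,0}[\pi^n]$ as $\varprojlim_{T\mapsto T^q}\FF_q[T^{1/q^\infty}]/T\isom\FF_q\powerseries{T^{1/q^\infty}}$, which is exactly the untwisting you carry out geometrically by identifying the $k$-th term of the direct system with the infinitesimal neighborhood $V(T^{q^{k+1}})\subset\tilde{H}_{E,0}$ and invoking $\Spf R=\varinjlim_k \Spf R/T^{q^{k+1}}R$. Your closing caveat, that the colimit must be taken in formal schemes (via coordinate rings) rather than as a pointwise union on adic test rings, is accurate and makes explicit a point the paper leaves implicit.
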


\begin{proof} For each $n\geq 1$ we have the closed immersion $H_{E,0}[\pi^n]\to H_{E,0}$.  Taking inverse limits gives a map $\varprojlim_n H_{E,0}[\pi^n]\to \tilde{H}_{E,0}$, and taking injective limits gives a map $\varinjlim_\pi \varprojlim_n H_{E,0}[\pi^n]\to\tilde{H}_{E,0}$.  The corresponding homomorphism of topological rings is
\[ \FF_q\powerseries{T^{1/q^\infty}}\to \varprojlim_{T\mapsto T^q} \varinjlim \FF_q[T^{1/q^n}]/T =\varprojlim_{T\mapsto T^q} \FF_q[T^{1/q^\infty}]/T, \]
which is an isomorphism.
\end{proof}

\begin{prop} \label{H0AndEinfty} There exists an isomorphism of formal schemes over $\FF_q$:
\[ \tilde{H}_{E,0}\isom \Spf \OO_{\hat{E}_\infty^{\flat}}. \]
This isomorphism is $E^\times$-equivariant, where the action of $E^\times$ on $\OO_{\hat{E}_\infty^{\flat}}$ is defined as follows:  $\OO_E^\times$ acts through the isomorphism $\OO_E^\times\isom \Gal(E_\infty/E)$ of class field theory, and $\pi\in E^\times$ acts as the $q$th power Frobenius map.  
\end{prop}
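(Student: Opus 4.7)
The plan is to combine Lemma \ref{tildeHE0} and Lemma \ref{H0pi} to rewrite $\OO_{\tilde H_{E,0}}$ as an iterated limit of the artinian rings $\OO_{E_n}/t$, and then to recognize this iterated limit as the standard construction of the tilt of $\OO_{\hat{E}_\infty}$. Concretely, Lemma \ref{tildeHE0} gives
\[ \tilde H_{E,0} \;=\; \varinjlim_{\pi}\,\varprojlim_{n} H_{E,0}[\pi^{n}], \]
while Lemma \ref{H0pi} identifies each torsion level $H_{E,0}[\pi^{n-1}]$ with $\Spec(\OO_{E_{n}}/t)$ in such a way that the closed immersions up the tower go over to Frobenius maps $\Frob_q \colon \OO_{E_{n+1}}/t \to \OO_{E_{n}}/t$. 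Taking $\Spf$ and dualizing the two limits, the inner projective limit becomes the direct limit $\varinjlim_{n}\,\OO_{E_{n}}/t = \OO_{\hat{E}_\infty}/t$ on the ring side, and one obtains
\[ \OO_{\tilde H_{E,0}} \;\isom\; \varprojlim_{\pi}\,\OO_{\hat{E}_\infty}/t, \]
where the outer transition is multiplication by $\pi$ on $\tilde H_{E,0}$.

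The key point is then that this outer $\varprojlim_{\pi}$ coincides with the tilt Frobenius limit. Indeed, in characteristic $p$ the Lubin-Tate series satisfies $[\pi]_{H_{E,0}}(T) = T^{q}$, so multiplication by $\pi$ on $\tilde H_{E,0}$ is nothing other than the absolute $q$th power Frobenius. Hence the transition in the outer limit is $\Frob_q$, and since $t$ is a pseudo-uniformizer of $\hat{E}_{\infty}$ one has $\varprojlim_{\Frob_q}\,\OO_{\hat{E}_\infty}/t = \OO_{\hat{E}_\infty^{\flat}}$. Combining the two steps yields the desired isomorphism $\tilde H_{E,0}\isom \Spf \OO_{\hat{E}_\infty^{\flat}}$.

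For the $E^\times$-equivariance: the action of $a \in \OO_E^{\times}$ on $\tilde H_{E,0}$ via $[a]_{H_{E,0}}$ restricts to an action on each $H_{E,0}[\pi^{n-1}]$, and under the identification of Lemma \ref{H0pi} this is precisely the action of $\Gal(E_{n}/E) \isom (\OO_{E}/\pi^{n})^{\times}$ on $\OO_{E_n}/t$ — essentially by the defining property of the Lubin-Tate Galois character, which tracks the $\OO_E^\times$-action on torsion points via local class field theory. Passing to the limit, these actions agree with the Galois action of $\OO_E^{\times} \isom \Gal(E_\infty/E)$ on $\OO_{\hat{E}_\infty^{\flat}}$. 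For the uniformizer $\pi$, equivariance reduces to the identity $[\pi]_{H_{E,0}} = \Frob_q$ invoked above, matching the stipulated action of $\pi$ as $q$th power Frobenius on $\OO_{\hat{E}_\infty^{\flat}}$.

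The main obstacle is the bookkeeping that translates the outer $\varinjlim_\pi$ on the formal-scheme side of Lemma \ref{tildeHE0} into the Frobenius inverse limit defining the tilt $\OO_{\hat{E}_\infty^{\flat}}$. The essential inputs are (i) the characteristic-$p$ identity $[\pi]_{H_{E,0}} = \Frob_q$, and (ii) the translation in Lemma \ref{H0pi} of the torsion-tower closed immersions into Frobenius maps between the rings $\OO_{E_n}/t$. Once these are aligned the match of limits is formal, and the equivariance statements are immediate consequences of the same identifications.
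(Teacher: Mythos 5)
Your proposal is essentially the paper's own proof: it combines Lemmas \ref{tildeHE0} and \ref{H0pi}, rewrites the double limit as $\varprojlim_{\Frob_q}\OO_{\hat{E}_\infty}/t\isom\OO_{\hat{E}_\infty^{\flat}}$ using $[\pi]_{H_{E,0}}=\Frob_q$, and obtains the $E^\times$-equivariance from the Lubin--Tate description of local class field theory together with that same Frobenius identity. The only blemish is bookkeeping: the transition maps in the inner limit are the multiplication-by-$\pi$ maps, which dualize to the natural inclusions $\OO_{E_n}/t\injects\OO_{E_{n+1}}/t$ rather than the Frobenius maps of Lemma \ref{H0pi} (those describe the closed immersions), but this slip does not affect the argument or its conclusion.
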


\begin{proof}  Combining Lemmas \ref{H0pi} and \ref{tildeHE0}, we get
\begin{eqnarray*}
\tilde{H}_{E,0} &\isom& \varinjlim_\pi \varprojlim_n H_{E,0}[\pi^n]\\
&\isom& \varinjlim_{\Frob_q} \Spec \OO_{E_\infty}/t \\
&\isom& \Spf \OO_{E_\infty^\flat}.
\end{eqnarray*}
The compatibility of this isomorphism with the $\OO_E^\times$ follows from the definition of the isomorphism $\OO_E^\times\isom \Gal(E_\infty/E)$ of local class field theory.  The compatibility of the action of $\pi$ follows from the second statement in Lemma \ref{H0pi}.  
\end{proof}

Let $C$ be a complete algebraically closed field containing $E$.  Write $\tilde{H}^{\ad}_{E,C}$ for the generic fiber of $\tilde{H}\hat{\otimes}\OO_C$.   By Prop. \ref{TiltOfGenericFiber}, $\tilde{H}^{\ad}_{E,C}$ is a perfectoid space.

\begin{prop} \label{HTildeVersusY} There exists an isomorphism of adic spaces
\[ \left(\tilde{H}^{\ad}_{E,C}\backslash\set{0}\right)^{\flat}\isom \left(Y_{C^\flat,E}^{\ad}\hat{\otimes} \hat{E}_{\infty}\right)^{\flat}
\]
equivariant for the action of $\OO_E^\times$ (which acts on $E_\infty$ by local class field theory).  The action of $\pi\in E^\times$ on the left corresponds to the action of $\phi^{-1}\otimes 1$ on the right, up to composition with the absolute Frobenius morphism on $\left(Y_{C^{\flat},E}\hat{\otimes} \hat{E}_\infty\right)^{\flat}$. 
\end{prop}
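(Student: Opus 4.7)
The strategy is to identify both sides explicitly with the punctured perfectoid open unit disc over $C^\flat$, and then match the actions. For the left-hand side, I would start from $\tilde H_E\isom\Spf\OO_E\powerseries{T^{1/q^\infty}}$, so $\tilde H_E\hat\otimes\OO_C=\Spf\OO_C\powerseries{T^{1/q^\infty}}$. By Prop.~\ref{TiltOfGenericFiber}, its adic generic fiber is perfectoid over $C$ and its tilt is the adic generic fiber over $C^\flat$ of $\Spf R^\flat$, where $R=\OO_C\powerseries{T^{1/q^\infty}}$. The isomorphism $\tilde H_{E,0}\isom\Spf\OO_{\hat E_\infty^\flat}$ from Prop.~\ref{H0AndEinfty} identifies $R^\flat\isom\OO_{\hat E_\infty^\flat}\hat\otimes_{\FF_q}\OO_{C^\flat}\isom\OO_{C^\flat}\powerseries{T^{1/q^\infty}}$, so deleting the $T=0$ valuation yields the punctured perfectoid open disc over $C^\flat$, with parameter $T^{1/q^\infty}$.

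For the right-hand side, I would apply Prop.~\ref{TiltOfYadNormField} to get $(Y^\ad_{C^\flat,E}\hat\otimes\hat E_\infty)^\flat\isom Y^\ad_{C^\flat,L(E)}\hat\otimes\hat E_\infty^\flat$. Since $L(E)\isom\FF_q\laurentseries{t}$, the space $Y^\ad_{C^\flat,L(E)}$ is the classical punctured open unit disc over $C^\flat$ with parameter $t$; base-changing along $L(E)\injects\hat E_\infty^\flat\isom\FF_q\laurentseries{t^{1/q^\infty}}$ adjoins all $q^n$-th roots of $t$, again producing the punctured perfectoid open disc over $C^\flat$, now with parameter $t^{1/q^\infty}$. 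The identification with the LHS is then given by $T^{1/q^n}\mapsto t^{1/q^n}$, matching the identifications of Prop.~\ref{H0AndEinfty}.

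The $\OO_E^\times$-equivariance is essentially automatic: on the LHS it comes from the formal $\OO_E$-module structure on $\tilde H_E$ and is intertwined by Prop.~\ref{H0AndEinfty} with the class field theory action on $\OO_{\hat E_\infty^\flat}$; on the RHS it is by definition the Galois action on $\hat E_\infty$, which tilts to the same action on $\hat E_\infty^\flat$. The delicate point is the $\pi$-action. On $\tilde H_E$, multiplication by $\pi$ is $[\pi]_{H_E}$, which reduces mod $\pi$ to the $q$-th power Frobenius of $\OO_{\hat E_\infty^\flat}$ by the second part of Prop.~\ref{H0AndEinfty}; after tilting, $\pi$ therefore acts as Frobenius on the parameter $T^{1/q^\infty}$ while fixing the scalar field $\OO_{C^\flat}$. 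On the RHS, $\phi^{-1}\otimes 1$ inverts the $q$-th power Frobenius on the factor $\OO_{C^\flat}$ while fixing the parameter. These two actions differ by the $q$-th power map applied to \emph{both} the parameter and the coefficients, which is (the $f$-th power of) absolute Frobenius on the perfectoid disc --- exactly the ``up to absolute Frobenius'' qualifier. The main technical obstacle is carrying the parameter identification $T\leftrightarrow t$ through the successive tilts and base changes consistently; once Prop.~\ref{H0AndEinfty} is invoked, the matching of the two Frobenius actions reduces to the explicit congruence $[\pi]_{H_E}(T)\equiv T^q\pmod\pi$.
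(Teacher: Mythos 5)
Your proposal is correct and follows essentially the same route as the paper: identify the tilt of $\tilde{H}^{\ad}_{E,C}$ with the generic fiber of $\Spf\bigl(\OO_{C^\flat}\hat{\otimes}_{\FF_q}\OO_{\hat{E}_\infty^\flat}\bigr)$ via Prop.~\ref{H0AndEinfty} and Prop.~\ref{TiltOfGenericFiber}, identify the right-hand side with the same space via Prop.~\ref{TiltOfYadNormField}, and observe that the $\pi$-action is the partial Frobenius on the $\OO_{\hat{E}_\infty^\flat}$-factor while $\phi\otimes 1$ is the partial Frobenius on the $\OO_{C^\flat}$-factor, their composition being the absolute ($q$-power) Frobenius. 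Your explicit bookkeeping with the parameters $T^{1/q^\infty}$ and $t^{1/q^\infty}$ is just a coordinate version of the paper's argument.
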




\begin{proof}  By Prop. \ref{H0AndEinfty} we have an isomorphism $\tilde{H}_0\isom \Spf \OO_{\hat{E}_\infty^{\flat}}$.  Extending scalars to $\OO_C$,  taking the adic generic fiber and applying tilts gives an isomorphism (cf. Prop. \ref{TiltOfGenericFiber})
\begin{eqnarray*}
\left(\tilde{H}^{\ad}_{E,C}\right)^{\flat} 
&\isom & 
\Spf\left(\OO_C \hat{\otimes}_{\FF_q} \OO_{E_\infty}\right)^{\flat,\ad}_{C}  \\
&\isom &
\Spf \left(\OO_{C^{\flat}} \hat{\otimes}_{\FF_q} \OO_{\hat{E}_\infty^{\flat}} \right)^{\ad}_{C^{\flat}}
\end{eqnarray*}
The adic space $\Spf \left(\OO_{C^{\flat}} \hat{\otimes}_{\FF_q} \OO_{\hat{E}_\infty^{\flat}} \right)^{\ad}_{C^{\flat}}$ has two ``Frobenii'':  one coming from $\OO_{C^\flat}$ and the other coming from $\OO_{\hat{E}^{\flat}_\infty}$.  Their composition is the absolute $q$th power Frobenius.  The action of $\pi$ on $\tilde{H}^{\ad,\flat}_C$ corresponds to the Frobenius on $\OO_{\hat{E}^{\flat}_\infty}$.  Removing the ``origin'' from both sides of this isomorphism and using Prop. \ref{TiltOfYadNormField} gives the claimed isomorphism.

\end{proof}

\begin{lemma} \label{FrobEquivariant} Let $X$ be a perfectoid space which is fibered over $\Spec \FF_q$, and suppose $f\from X\to X$ is an $\FF_q$-linear automorphism.  Let $\Frob_q\from X\to X$ be the absolute Frobenius automorphism of $X$.  Then the category of $f$-equivariant finite \'etale covers of $X$ is equivalent to the category of $f\circ\Frob_q$-equivariant finite \'etale covers of $X$.
\end{lemma}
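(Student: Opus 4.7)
The plan is to exploit the fact that $X$ is perfectoid in characteristic $p$: this will force the absolute Frobenius $\Frob_q \colon X \to X$ to be an \emph{isomorphism} of adic spaces (perfectoid algebras in characteristic $p$ are perfect). First I would invoke Scholze's almost purity theorem \cite{ScholzePerfectoidSpaces} to conclude that any finite \'etale cover $g \colon Y \to X$ is itself perfectoid in characteristic $p$, so its own absolute Frobenius $\Frob_{q,Y} \colon Y \to Y$ is also an isomorphism. By naturality of Frobenius, $g \circ \Frob_{q,Y} = \Frob_q \circ g$.

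Next I would unwind the definitions: an $f$-equivariant structure on $Y$ is an automorphism $\tilde{f} \colon Y \to Y$ with $g \circ \tilde{f} = f \circ g$, and similarly for $f \circ \Frob_q$. I would then build the equivalence of categories by sending $(Y, \tilde{f}) \mapsto (Y, \tilde{f} \circ \Frob_{q,Y})$. The composite lifts $f \circ \Frob_q$ because
\[
g \circ (\tilde{f} \circ \Frob_{q,Y}) \;=\; f \circ g \circ \Frob_{q,Y} \;=\; f \circ \Frob_q \circ g,
\]
and since $\Frob_{q,Y}$ is invertible, this assignment is a bijection on objects with inverse $(Y, \tilde{h}) \mapsto (Y, \tilde{h} \circ \Frob_{q,Y}^{-1})$. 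Functoriality in morphisms $h \colon Y_1 \to Y_2$ of \'etale covers is automatic because absolute Frobenius is natural in $Y$, so any such $h$ intertwines $\Frob_{q,Y_1}$ and $\Frob_{q,Y_2}$.

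The main obstacle is the invertibility of $\Frob_{q,Y}$, which rests on the claim that a finite \'etale cover of a perfectoid space in characteristic $p$ remains perfectoid (and hence perfect on the level of structure rings). This is exactly the content of the almost purity theorem, so once that is cited the rest is book-keeping. A small secondary point worth noting: the hypothesis that $f$ is $\FF_q$-linear makes $f$ commute with $\Frob_q$, which keeps $f \circ \Frob_q$ a genuine automorphism of $X$ and allows the composite $\tilde{f} \circ \Frob_{q,Y}$ to serve as a lift without any additional twisting.
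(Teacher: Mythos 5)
Your proof is correct and follows essentially the same route as the paper: cite that perfectoid algebras in characteristic $p$ are perfect so absolute Frobenius is invertible, use Scholze's result (Thm.\ 7.9(iii) of \cite{ScholzePerfectoidSpaces}, i.e.\ almost purity) to see that a finite \'etale cover $Y$ is again perfectoid, and then define the equivalence by composing the equivariant structure with $\Frob_{q,Y}$, which is invertible. The only cosmetic difference is that you spell out the bookkeeping (naturality of Frobenius, the explicit inverse functor) that the paper leaves as ``formal.''
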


\begin{proof}  First observe that a perfectoid algebra in characteristic $p$ is necessarily perfect (\cite{ScholzePerfectoidSpaces}, Prop. 5.9), which implies that absolute Frobenius is an automorphism of any perfectoid space in characteristic $p$.  Then note that if $Y\to X$ is a finite \'etale cover, then $Y$ is also perfectoid (\cite{ScholzePerfectoidSpaces}, Thm. 7.9(iii)), so that $\Frob_q$ is an automorphism of $Y$.  

The proof of the lemma is now formal:  if $g\from Y\to X$ is a finite \'etale cover and $f_Y\from Y\to Y$ lies over $f\from X\to X$, then $f_Y\circ \Frob_q\from Y\to Y$ lies over $f\circ\Frob_q\from X\to X$.  Since $\Frob_q$ is invertible on $Y$, the functor is invertible.
\end{proof}

We can now prove the main theorem.
\begin{Theorem}  \label{MainTheorem} There is an equivalence between the category of $E^\times$-equivariant \'etale covers of $\tilde{H}_C^{\ad}\backslash\set{0}$ and the category of finite \'etale $E$-algebras.
\end{Theorem}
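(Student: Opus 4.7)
The plan is to reduce the theorem, via tilting and Prop.~\ref{HTildeVersusY}, to a question about finite \'etale covers of the adic Fargues-Fontaine curve $X^{\ad}_{C^\flat,E}$ base-changed to $\hat{E}_\infty$, and then apply Prop.~\ref{GeometricallySimplyConnected} together with Galois descent along the Lubin-Tate tower.

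First, since $\tilde{H}^{\ad}_{E,C}\backslash\set{0}$ is perfectoid, Scholze's tilting equivalence identifies its $E^\times$-equivariant finite \'etale covers with the corresponding equivariant covers of its tilt. By Prop.~\ref{HTildeVersusY}, this tilt is isomorphic to $(Y^{\ad}_{C^\flat,E}\hat{\otimes}\hat{E}_\infty)^\flat$ via an isomorphism intertwining the $\OO_E^\times$-actions on the nose, and carrying $\pi\in E^\times$ to $\phi^{-1}\otimes 1$ only up to composition with the absolute Frobenius. Working on the characteristic-$p$ tilt side, Lemma~\ref{FrobEquivariant} strips the absolute Frobenius out of the equivariance data. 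Since $Y^{\ad}_{C^\flat,E}\hat{\otimes}\hat{E}_\infty$ is itself perfectoid by Prop.~\ref{TiltOfGenericFiber}, tilting back converts the problem into one about $(\phi^{\Z}\times\OO_E^\times)$-equivariant finite \'etale covers of $Y^{\ad}_{C^\flat,E}\hat{\otimes}\hat{E}_\infty$ in characteristic zero, with $\phi$ the arithmetic Frobenius on $Y^{\ad}_{C^\flat,E}$ and $\OO_E^\times$ acting through local class field theory on $\hat{E}_\infty$.

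By definition $X^{\ad}_{C^\flat,E}=Y^{\ad}_{C^\flat,E}/\phi^{\Z}$, so quotienting by $\phi^{\Z}$ turns the previous category into the category of $\OO_E^\times$-equivariant finite \'etale covers of $X^{\ad}_{C^\flat,E}\hat{\otimes}\hat{E}_\infty$. Prop.~\ref{GeometricallySimplyConnected} asserts that $X^{\ad}_{C^\flat,E}$ is geometrically simply connected, and Thm.~\ref{XIsACurve} combined with the GAGA of \cite{FarguesQuelquesResultats} gives $H^0(X^{\ad}_{C^\flat,E},\OO)=E$ and hence geometric connectedness. Consequently $\pi_1^{\et}(X^{\ad}_{C^\flat,E})=\Gal(\overline{E}/E)$, and finite \'etale covers of $X^{\ad}_{C^\flat,E}\hat{\otimes}\hat{E}_\infty$ correspond to finite \'etale $\hat{E}_\infty$-algebras, with $\OO_E^\times$ acting through its tautological Galois action on $\hat{E}_\infty$. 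A final Galois descent along $E\subset E_\infty\subset\hat{E}_\infty$, using that $E$ is the topological fixed subfield of $\hat{E}_\infty$ under $\OO_E^\times$, then identifies these with finite \'etale $E$-algebras, completing the proof.

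The main obstacle is the first step: bookkeeping the equivariances through the tilting equivalence and precisely invoking Lemma~\ref{FrobEquivariant} to absorb the absolute Frobenius twist contributed by Prop.~\ref{HTildeVersusY}. The closing Galois descent is formally standard but deserves some care because $\hat{E}_\infty/E$ is transcendental; since continuity forces the $\OO_E^\times$-action on any finite \'etale $\hat{E}_\infty$-algebra to factor through some finite quotient $\Gal(E_n/E)$, ordinary Galois descent along the finite extension $E_n/E$ suffices.
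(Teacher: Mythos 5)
Your chain agrees with the paper's up through the Frobenius-untwisting step: tilt, apply Prop.~\ref{HTildeVersusY}, and use Lemma~\ref{FrobEquivariant} to trade $\phi^{-1}\circ\Frob_q$ for $\phi^{-1}$. After that you diverge: you un-tilt back to characteristic $0$ and try to finish on the curve $X_{C^\flat,E}^{\ad}\hat{\otimes}\hat{E}_\infty$ using Prop.~\ref{GeometricallySimplyConnected}, whereas the paper stays in characteristic $p$, applies the Hartl--Pink-based Prop.~\ref{XGeomConnectedCharp} to $X_{C^\flat,L(E)}^{\ad}\otimes\hat{E}^{\flat}_\infty$ to land in $\OO_E^\times$-equivariant finite \'etale $\hat{E}^{\flat}_\infty$-algebras, converts these to $\hat{E}_\infty$-algebras by the theory of norm fields, and only then takes $\OO_E^\times$-invariants. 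The gap in your endgame is the sentence beginning ``Consequently'': Prop.~\ref{GeometricallySimplyConnected} only says that finite \'etale covers of $X_{C^\flat,E}^{\ad}\hat{\otimes}C'$ split for $C'$ complete and \emph{algebraically closed}. To conclude that covers of $X_{C^\flat,E}^{\ad}\hat{\otimes}\hat{E}_\infty$ are classified by finite \'etale $\hat{E}_\infty$-algebras you additionally need $H^0(X_{C^\flat,E}^{\ad}\hat{\otimes}\hat{E}_\infty,\OO)=\hat{E}_\infty$ and a descent argument from the completed algebraic closure of $\hat{E}_\infty$ down to $\hat{E}_\infty$; your appeal to Thm.~\ref{XIsACurve} and GAGA only gives $H^0=E$ over $E$, and the characteristic-$0$ adic curve is exactly where the paper has no robust coherent theory (cf.\ Conj.~\ref{BINoetherian}). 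This is precisely why the paper keeps the simple-connectedness step in characteristic $p$, where $Y^{\ad}\isom D^*_{C^\flat}$ is an honest rigid space and the $\sigma$-bundle classification applies directly; note also that Prop.~\ref{GeometricallySimplyConnected} is itself proved by tilting back to Prop.~\ref{XGeomConnectedCharp}, so your detour through characteristic $0$ buys nothing and costs an unproved base-change/descent statement.

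A smaller point: in your final descent you assert that continuity forces the $\OO_E^\times$-action on a finite \'etale $\hat{E}_\infty$-algebra to factor through a finite quotient $\Gal(E_n/E)$. That cannot be literally true, since the action is semilinear over the faithful action of $\OO_E^\times\isom\Gal(E_\infty/E)$ on $\hat{E}_\infty$. The correct reduction is that the \emph{algebra} (not the action) is defined at a finite level: every finite \'etale $\hat{E}_\infty$-algebra arises by base change from some $E_n$ (using invariance of the finite \'etale site under completion of the henselian field $E_\infty$), after which ordinary finite Galois descent along $E_n/E$ applies; this is what the paper's terse step $L\mapsto L^{\OO_E^\times}$ is implicitly doing. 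With the endgame repaired as above (or better, rerouted through Prop.~\ref{XGeomConnectedCharp} and the norm-field equivalence as in the paper), your argument would be complete.
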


\begin{proof}
In the following chain of equivalences, ``$G$-cover of $X$'' is an abbreviation for ``$G$-equivariant finite \'etale cover of $X$''.     
\[\begin{matrix*}[l]
& &
\set{\text{$E^\times$-covers of $\tilde{H}_C^{\ad}\backslash\set{0}$}}
&
\\
& \isom &
\set{\text{$E^\times$-covers of 
$\tilde{H}_C^{\ad,\flat}\backslash\set{0}$}}
&\text{\cite{ScholzePerfectoidSpaces}, Thm. 7.12}
 \\
& \isom &
\set{\text{$\OO_E^\times\times (\phi^{-1}\circ\Frob_q)^\Z$-covers of $Y_{C^\flat,L(E)}^{\ad}\otimes\hat{E}^{\flat}_\infty$ }} 
&\text{Prop. \ref{HTildeVersusY}}
\\
& \isom &
\set{\text{$\OO_E^\times\times \phi^\Z$-covers of $Y_{C^\flat,L(E)}^{\ad}\otimes\hat{E}^{\flat}_\infty$ }} 
&\text{Lemma. \ref{FrobEquivariant}}
\\
& \isom &
\set{\text{$\OO_E^\times$-covers of $X_{C^\flat,L(E)}^{\ad}\otimes\hat{E}^{\flat}_\infty$}}
&\text{Defn. of $X_{C^\flat,L(E)}^{\ad}$} 
\\
& \isom &
\set{\text{$\OO_E^\times$-equivariant finite \'etale $\hat{E}^{\flat}_\infty$-algebras}} 
&\text{Prop. \ref{XGeomConnectedCharp}}
\\
& \isom &
\set{\text{$\OO_E^\times$-equivariant finite \'etale $\hat{E}_\infty$-algebras}}
&\text{Theory of norm fields}
\\
& \isom &
\set{\text{Finite \'etale $E$-algebras}}
&L\mapsto L^{\OO_E^\times}
\end{matrix*}
\]
\end{proof}

\subsection{Functoriality in $E$.}
Let us write
\[ Z_E = (\tilde{H}^{\ad}_{E,C}\backslash\set{0})/E^\times.\]
As in the introduction, $Z_E$ may be considered as a sheaf on $\text{Perf}_C$, although the reader may also interpret the definition of $Z_E$ purely formally, keeping in mind that a ``finite \'etale cover of $Z_E$'' means the same thing as an $E^\times$-equivariant finite \'etale cover of $\tilde{H}^{\ad}_{E,C}$.

Let us check that $Z_E$ really only depends on $E$.  The construction depends on the choice of Lubin-Tate formal $\OO_E$-module $H=H_E$, which depends in turn on the choice of uniformizer $\pi$.  If $\pi'\in E$ is a different uniformizer, with corresponding $\OO_E$-module $H'$, then $H$ and $H'$ become isomorphic after base extension to $\OO_{\hat{E}^{\nr}}$, the ring of integers in the completion of the maximal unramified extension of $E$.  Such an isomorphism is unique up to multiplication by $E^\times$.  Thus the adic spaces $\tilde{H}^{\ad}_C$ and $(\tilde{H}')^{\ad}_C$ are isomorphic, and we get a {\em canonical} isomorphism $(\tilde{H}^{\ad}_C\backslash\set{0})/E^\times \to (\tilde{H}^{\ad}_C\backslash\set{0})/E^\times$.  Thus $Z_E$ only depends on $E$.

It is also natural to ask whether the formation of $Z_E$ is functorial in $E$.  That is, given a finite extension $E'/E$ of degree $d$, there ought to be a ``norm'' morphism $N_{E'/E}\from Z_{E'}\to Z_{E}$, which makes the following diagram commute:
\begin{equation}
\label{Pi1Diagram}
\xymatrix{
\pi_1^{\et}(Z_{E'}) \ar[r]^{N_{E'/E}} \ar[d]_{\sim} & \pi_1^{\et}(Z_{E}) \ar[d]^{\sim} \\
\Gal(\overline{E}/E') \ar[r] & \Gal(\overline{E}/E).
}
\end{equation}
There is indeed such a norm morphism;  it is induced from the {\em determinant} morphism on the level of $\pi$-divisible $\OO_E$-modules.  The existence of exterior powers of such modules is the subject of \cite{Hedayatzadeh}.  Here is the main result we need\footnote{This result requires the residue characteristic to be odd, but we strongly suspect this is unnecessary. See \cite{ScholzeWeinsteinModuli}, \S6.4 for a construction of the determinant map (on the level of universal covers of formal modules) without any such hypothesis.} (Thm. 4.34 of \cite{Hedayatzadeh}):  let $G$ be a $\pi$-divisible $\OO_E$-module of height $h$ (relative to $E$) and dimension 1 over a noetherian ring $R$.  Then for all $r\geq 1$ there exists a $\pi$-divisible $\OO_E$-module $\bigwedge^r_{\OO_E} G$ of height ${h\choose r}$ and dimension ${h-1\choose r-1}$, together with a morphism $\lambda\from G^r\to \bigwedge^r_{\OO_E}G$ which satisfies the appropriate universal property.  In particular the determinant $\bigwedge^h_{\OO_E} G$ has height 1 and dimension 1.

Let $\pi'$ be a uniformizer of $E'$, and let $H'$ be a Lubin-Tate formal $\OO_{E'}$-module.  Then $H'[(\pi')^{\infty}]$ is a $\pi'$-divisible $\OO_{E'}$-module over $\OO_{E'}$ of height 1 and dimension 1.  By restriction of scalars, it becomes a $\pi$-divisible $\OO_E$-module $H'[\pi^\infty]$ of height $d$ and dimension 1.  Then $\bigwedge^d_{\OO_E} H'[\pi^\infty]$ is a $\pi$-divisible $\OO_E$-module of height 1 and dimension 1, so that it is the $\pi$-power torsion in a Lubin-Tate formal $\OO_E$-module $\bigwedge^d H'$ defined over $\OO_{E'}$.  For all $n\geq 1$ we have an $\OO_E/\pi^n$-alternating morphism
\[ \lambda\from  H'[\pi^n]^d \to \bigwedge^d H'[\pi^n] \]
of $\pi$-divisible $\OO_E$-modules over $\OO_{E'}$.  Let $H'_0=H'\otimes \OO_{E'}/\pi'$.  Reducing mod $\pi'$, taking inverse limits with respect to $n$ and applying Lemma \ref{tildeHE0} gives a morphism
\[ \lambda_0\from (\tilde{H}'_0)^d \to \widetilde{\bigwedge^d H'}_0 \]
of formal vector spaces over $\OO_{E'}/\pi'$.  By the crystalline property of formal vector spaces (\cite{ScholzeWeinsteinModuli}, Prop. 3.1.3(ii)), this morphism lifts uniquely to a morphism 
\[ \tilde{\lambda}\from (\tilde{H}')^d \to \widetilde{\bigwedge^d H'} \]
of formal vector space over $\OO_{E'}$.  

Since $\bigwedge^d H'$ and $H$ are both height 1 and dimension 1, they become isomorphic after passing to $\OO_C$.  Let $\alpha_1,\dots,\alpha_n$ be a basis for $E'/E$, and define a morphism of formal schemes
\begin{eqnarray*}
\tilde{H}'_{\OO_C} &\to& \widetilde{\bigwedge^d H'}_{\OO_\C} \isom \tilde{H}_{\OO_C}\\
x &\mapsto& \tilde{\lambda}(\alpha_1 x,\dots,\alpha_d x) 
\end{eqnarray*}
After passing to the generic fiber we get a well-defined map $N_{E'/E}\from Z_{E'}\to Z_E$ which does not depend on the choice of basis for $E'/E$.  

The commutativity of the diagram in Eq. \eqref{Pi1Diagram} is equivalent to the following proposition.
\begin{prop}  The following diagram commutes:
\[
\xymatrix{
\set{\text{\'Etale $E$-algebras}} \ar[r]^{A\mapsto A\otimes_E E'} \ar[d] & \set{\text{\'Etale $E'$-algebras}} \ar[d] \\
\set{\text{\'Etale covers of $Z_E$}} \ar[r] & \set{\text{\'Etale covers of $Z_E'$}}.
}
\]
Here the bottom arrow is pullback via $N_{E'/E}$.   
\end{prop}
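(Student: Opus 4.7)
The plan is to reduce the commutativity of the diagram to the classical compatibility between the determinant of Lubin-Tate $\OO_E$-modules and the norm map $N_{E'/E}\from (E')^\times\to E^\times$ under local class field theory. Since both functors in the diagram are product-preserving, it suffices to check the diagram on a single finite separable field extension $L/E$, which is in turn equivalent to showing that, under the isomorphisms $\pi_1^{\et}(Z_E)\isom\Gal(\overline{E}/E)$ and $\pi_1^{\et}(Z_{E'})\isom\Gal(\overline{E}/E')$ from Theorem \ref{MainTheorem}, the induced map $(N_{E'/E})_\ast$ corresponds to the natural inclusion $\Gal(\overline{E}/E')\hookrightarrow\Gal(\overline{E}/E)$.

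The first step is to trace $N_{E'/E}$ through the chain of equivalences in the proof of Theorem \ref{MainTheorem}. The tilting equivalence, the passage from $Y^{\ad}$ to $X^{\ad}$, the geometric simple connectedness of $X^{\ad}$, Lemma \ref{FrobEquivariant}, and Prop. \ref{TiltOfYadNormField} are all formally natural in their inputs and hence commute automatically with any morphism between the relevant spaces. The two non-formal steps are (i) the identification $\tilde{H}_{E,0}\isom\Spf\OO_{\hat{E}_\infty^\flat}$ of Prop. \ref{H0AndEinfty}, which encodes the Lubin-Tate tower for $E$, and (ii) the Galois-descent step $L\mapsto L^{\OO_E^\times}$, which uses the local class field theory identification $\OO_E^\times\isom\Gal(E_\infty/E)$.

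The key classical input is the compatibility of the determinant of Lubin-Tate modules with local class field theory: for the restriction of scalars $H'[\pi^\infty]_{/E}$ of $H'[(\pi')^\infty]$, the determinant $\det_{\OO_E}H'[\pi^\infty]$ becomes, after base change to $\OO_{\widehat{E^{\nr}}}$, isomorphic to $H_E[\pi^\infty]$, and the induced identification of Tate modules is $\Gal(\overline{E}/E')$-equivariant via the norm $(E')^\times\to E^\times$ followed by $\rec_E$. Applied to the formal vector spaces using the crystalline property (\cite{ScholzeWeinsteinModuli}, Prop. 3.1.3), the morphism $\tilde{\lambda}$ from the construction of $N_{E'/E}$ reduces modulo $\pi'$ -- via Prop. \ref{H0AndEinfty} for both $E$ and $E'$ -- to a morphism $\Spf\OO_{(\hat{E}'_\infty)^\flat}\to\Spf\OO_{\hat{E}_\infty^\flat}$ which is equivariant along $N_{E'/E}\from\OO_{E'}^\times\to\OO_E^\times$. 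Tracking this through the remaining formal steps of the chain of equivalences translates, on the algebra side, into the functor $A\mapsto A\otimes_E E'$, which is exactly the claimed conclusion.

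The main obstacle is the verification of this determinant-norm compatibility at the level of the formal vector space $\tilde{H}_{\OO_C}$ (rather than merely at the level of Tate modules or Galois representations), which requires combining Hedayatzadeh's universal property of $\bigwedge^d_{\OO_E}$ with the rigidity of morphisms of formal vector spaces. Once that is secured, the remainder is a routine diagram chase through the chain of equivalences in Theorem \ref{MainTheorem}.
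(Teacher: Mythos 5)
Your proposal is correct and takes essentially the same route as the paper: reduce the statement to the assertion that $N_{E'/E}$, traced through the chain of equivalences of Theorem \ref{MainTheorem}, acts on the Lubin-Tate towers compatibly with the norm map and local class field theory, all remaining steps being formal/natural. The only difference is how that key compatibility is certified: the paper checks it directly at each finite level by showing $\lambda(\alpha_1x,\dots,\alpha_dx)$ is a primitive $\pi^n$-torsion point of $\bigwedge^d H'$ (the functoriality of Lemma \ref{H0pi}), producing compatible embeddings $\hat{E}^{\nr}_n\injects\hat{E}^{\prime,\nr}_n$, whereas you quote the classical determinant--norm compatibility of Lubin-Tate characters under $\rec$ and lift it to formal $E$-vector spaces by rigidity---the same content in a slightly different package.
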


\begin{proof} Ultimately, the proposition will follow from the functoriality of the isomorphism in Lemma \ref{H0pi}.  For $n\geq 1$, let $E'_n$ be the field obtained by adjoining the $\pi^n$-torsion in $H'$ to $E'$.  (Note that the $\pi^n$-torsion is the same as the $(\pi')^{en}$ torsion, where $e$ is the ramification degree of $E'/E$.)  The existence of $\lambda$ shows that $E_n$ contains the field obtained by adjoining the $\pi^n$-torsion in $\bigwedge^d H'$ to $E_n$.  Namely, let $x\in H[\pi^n](\OO_{E'_n})$ be a primitive element, in the sense that $x$ generates $H'[\pi^n](\OO_{E'_n})$ as an $\OO_{E'}/\pi^n$-module.  If $\alpha_1,\dots,\alpha_d$ is a basis for $\OO_{E'}/\OO_E$ then $\lambda(\alpha_1 x,\dots,\alpha_d x)$ generates $\bigwedge H'[\pi^n](\OO_{E'_n})$ as an $\OO_{E}/\pi^n$-module.  Since $\bigwedge^d H'$ and $H$ become isomorphic over $\hat{E}^{\prime,\nr}$, we get a compatible family of embeddings $\hat{E}^{\nr}_n\injects \hat{E}^{\prime,\nr}_n$.  

By construction, these embeddings are compatible with the isomorphisms in Lemma \ref{H0pi}, so that the following diagram commutes:
\[
\xymatrix{
\Spf \OO_{E_n^{\prime,\nr}/t'} \ar[r]^{\sim} \ar[d] & \tilde{H}'_0[\pi^{n-1}]\otimes\overline{\FF}_q \ar[d]^{N_{E'/E}} \\
\Spf \OO_{E_n^{\nr}/t} \ar[r]_{\sim} & \tilde{H}_0[\pi^{n-1}]\otimes\overline{\FF}_q
}
\]
Here $t'\in \OO_{E_1}$ is a uniformizer.  From here we get the commutativity of the following diagram:
\[
\xymatrix{
\Spf \left(\OO_{\hat{E}_\infty^{\prime,\flat}}\hat{\otimes}_{\OO_{E'}/\pi'} \OO_{C^\flat}\right) \ar[r] \ar[d] & \tilde{H}_{\OO_C}^{\prime,\flat} \ar[d]^{N_{E'/E}} \\
\Spf \left(\OO_{\hat{E}_\infty^{\flat}}\hat{\otimes}_{\FF_q} \OO_{C^\flat}\right) \ar[r] & \tilde{H}_{\OO_C}^{\flat}. 
}
\]
One can now trace this compatibility with the chain of equivalences in the proof of Thm. \ref{MainTheorem} to get the proposition.  The details are left to the reader.
\end{proof}

\subsection{Descent from $C$ to $E$}  The object $Z_E=(\tilde{H}^{\ad}_{E,C}\backslash\set{0})/E^\times$ is defined over $C$, but it has an obvious model over $E$, namely
$Z_{E,0}=(\tilde{H}^{\ad}_E\backslash\set{0})/E^\times$.   Since the geometric fundamental group of $Z_{E,0}$ is $\Gal(\overline{E}/E)$, we have an exact sequence
\[ 1\to \Gal(\overline{E}/E)\to \pi_1^{\et}(Z_{E,0})\to \Gal(\overline{E}/E) \to 1. \]
This exact sequence splits as a direct product.  In other words:
\begin{prop}  Let $E'/E$ be a finite extension, and let $Z_E'\to Z_E$ be the corresponding finite \'etale cover under Thm. \ref{MainTheorem}.  Then $Z_E'$ descends to a finite \'etale cover $Z_{E,0}'\to Z_{E,0}$.  
\end{prop}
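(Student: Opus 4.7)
The plan is to produce an explicit descent by base change, and then to identify it with the cover produced by Theorem \ref{MainTheorem}. Given a finite extension $E'/E$, I would set
\[ Z_{E,0}' := \bigl((\tilde{H}_E^{\ad}\backslash\set{0}) \otimes_E E'\bigr)/E^\times, \]
with $E^\times$ acting only on the first factor. Since $\tilde{H}_E^{\ad}$ is already defined over $E$, the $E^\times$-action (via the Lubin-Tate $\OO_E$-module structure) is defined over $E$, and $\Spec E'\to\Spec E$ is finite \'etale of degree $[E':E]$, the natural projection $Z_{E,0}'\to Z_{E,0}$ is a finite \'etale cover of that degree, manifestly defined over $E$.

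It then suffices to check that $Z_{E,0}'\hat{\otimes}_E C$ is isomorphic, as a cover of $Z_E$, to the cover $Z_E'$ produced from the $E$-algebra $E'$ by Theorem \ref{MainTheorem}. My approach is to trace $E'$ backwards through the chain of equivalences in the proof of Theorem \ref{MainTheorem}. Under the inverse of the final step $L\mapsto L^{\OO_E^\times}$, the $E$-algebra $E'$ corresponds to the $\OO_E^\times$-equivariant finite \'etale $\hat{E}_\infty$-algebra $E'\otimes_E\hat{E}_\infty$; this determines in turn an $\OO_E^\times$-equivariant cover of $X^{\ad}_{C^\flat,L(E)}\hat{\otimes}\hat{E}_{\infty}^{\flat}$ (via geometric simple connectedness), its lift to an $\OO_E^\times\times\phi^\Z$-equivariant cover of $Y^{\ad}_{C^\flat,L(E)}\hat{\otimes}\hat{E}_{\infty}^{\flat}$, the corresponding $\OO_E^\times\times(\phi^{-1}\circ\Frob_q)^\Z$-cover via Lemma \ref{FrobEquivariant}, its un-tilt to an $E^\times$-cover of $\tilde{H}_C^{\ad,\flat}\backslash\set{0}$, and finally (by Scholze's tilting equivalence) the desired $E^\times$-cover of $\tilde{H}_C^{\ad}\backslash\set{0}$. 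At each step the operation is visibly functorial in the coefficient $E$-algebra, so the final cover produced is $(\tilde{H}_C^{\ad}\backslash\set{0})\otimes_E E'$, which, after taking the $E^\times$-quotient, is $Z_{E,0}'\hat{\otimes}_E C$.

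The main obstacle is verifying the naturality of each link in the chain with respect to base change along $\Spec E'\to \Spec E$. The key facts needed are that tilting commutes with base change by a finite \'etale algebra (since $\hat{E}_\infty\otimes_E E'$ is itself perfectoid and tilts to $\hat{E}_{\infty}^{\flat}\otimes_{L(E)}L(E)'$ for the appropriate finite \'etale $L(E)$-algebra), and that the isomorphism of Proposition \ref{HTildeVersusY} is natural in such base change. Since tensoring with a finite \'etale algebra does not interact with any of the perfectoid or Frobenius operations in a nontrivial way, once these formal compatibilities are checked the two covers agree and the descent is established.
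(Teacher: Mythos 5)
Your proposed model is not the right one, and the identification you assert at the end cannot hold. You take $Z_{E,0}'=\bigl((\tilde{H}_E^{\ad}\backslash\set{0})\otimes_E E'\bigr)/E^\times$, i.e.\ the pullback of $\Spec E'\to\Spec E$ along $Z_{E,0}\to\Spec E$. After base change to $C$ this becomes $Z_E\otimes_C(E'\otimes_E C)$, and since $C$ is algebraically closed containing $E$ we have $E'\otimes_E C\isom C^{[E':E]}$; so your cover becomes the \emph{split} cover $\coprod_{[E':E]}Z_E$. But the cover $Z_E'$ attached to the field $E'$ by Thm.~\ref{MainTheorem} is connected: under an equivalence of categories a field (an algebra with no nontrivial idempotents) corresponds to a connected cover. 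Indeed, if every cover produced by the theorem were of your constant form, then all finite \'etale covers of $Z_E$ would split over $C$ and $\pi_1^{\et}(Z_E)$ would be trivial, contradicting the whole content of the theorem, namely $\pi_1^{\et}(Z_E)\isom\Gal(\overline{E}/E)$ with $Z_E$ defined over the algebraically closed field $C$. So the claim ``the final cover produced is $(\tilde{H}_C^{\ad}\backslash\set{0})\otimes_E E'$'' is false, and the error sits precisely in the step ``at each step the operation is visibly functorial in the coefficient $E$-algebra.'' Functoriality is fine, but the dictionary is not the naive one: by Prop.~\ref{H0AndEinfty} the ring $\OO_{\hat{E}_\infty^{\flat}}$ is identified with the coordinate ring $\FF_q\powerseries{t^{1/q^\infty}}$ of $\tilde{H}_{E,0}$ itself, with $\Gal(E_\infty/E)\isom\OO_E^\times$ acting geometrically and $\pi$ acting as Frobenius. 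Hence enlarging the coefficient algebra from $\hat{E}_\infty$ to $E'\otimes_E\hat{E}_\infty$ changes the geometric coordinate ring, producing a genuinely twisted (generally connected) $E^\times$-equivariant cover, not an extension of constants.

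The paper's proof repairs exactly this point: one writes $Z_{E,0}=\Spf W_{\OO_E}(\OO_{\hat{E}_\infty^{\flat}})^{\ad}_E\backslash\set{0}$ (modulo $E^\times$), sets $\hat{E}'_\infty=E'\otimes_E\hat{E}_\infty$, which is a perfectoid $\hat{E}_\infty$-algebra, and defines the descended cover by replacing the coefficient ring inside the Witt vectors, $Z_{E,0}'=\Spf W_{\OO_E}(\OO_{\hat{E}^{\prime,\flat}_\infty})^{\ad}_E\backslash\set{0}$; this is finite \'etale over $Z_{E,0}$, and one checks that its base change to $C$ has the same tilt as $Z_E'$, using Prop.~\ref{TiltOfGenericFiber}. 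If you want to salvage your strategy, you must build the model from the $\hat{E}_\infty$-algebra $E'\otimes_E\hat{E}_\infty$ (equivalently its tilt) rather than from the $E$-algebra $E'$ tensored onto the base; the subsequent verification by tracing the chain of equivalences, or by comparing tilts as in the paper, then has a chance of working.
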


\begin{proof}  Let us write $R=H^0(\tilde{H},\OO_{\tilde{H}})$ for the coordinate ring of $\tilde{H}$, so that $R\isom \OO_E\powerseries{t^{1/q^\infty}}$.  Then $R/\pi\isom\FF_q\powerseries{t^{1/q^\infty}}$ is a perfect ring and $R=W_{\OO_E}(R/\pi)$.  By Prop. \ref{H0AndEinfty} we have $R/\pi=\OO_{\hat{E}_{\infty}^{\flat}}$.  Thus $Z_{E,0}=\Spf W(\OO_{\hat{E}_\infty^{\flat}})^{\ad}_E\backslash\set{0}$.  

Now suppose $E'/E$ is a finite extension.  Let $\hat{E}'_{\infty}=E'\otimes_E \hat{E}_\infty$, a finite \'etale $\hat{E}_\infty$-algebra.  Then $\hat{E}'_{\infty}$ is a perfectoid $\hat{E}_\infty$-algebra (in fact it a product of perfectoid fields), so that we can form the tilt $\hat{E}^{\prime,\flat}_\infty$, a perfect ring containing $\hat{E}_\infty^{\flat}$.  Let $\OO_{\hat{E}^{\prime,\flat}_\infty}$ be its subring of power-bounded elements.  We put
\[ Z_{E,0}'=\Spf W_{\OO_E}(\OO_{\hat{E}^{\prime,\flat}})^{\ad}_E\backslash\set{0}, \]
a finite \'etale cover of $Z_{E,0}$.  

We claim that $Z_E'=Z_{E,0}\hat{\otimes} C$.  It suffices to show that the tilts are the same.  We have
\begin{eqnarray*}
\left(Z_{E,0}\hat{\otimes} C\right)^{\flat}
&=&
\Spf \left(W_{\OO_E}(\OO_{\hat{E}^{\prime,\flat}}\hat{\otimes}\OO_C)^{\flat,\ad}_{C^{\flat}}\backslash\set{0}\right)/E^\times \\
&=& \left(\Spf \left(\OO_{\hat{E}^{\prime,\flat}} \hat{\otimes}\OO_{C^{\flat}}\right)^{\ad}_{C^{\flat}}\backslash\set{0}\right)/E^\times, 
\end{eqnarray*}
which by construction is the tilt of $Z_E'$.  
\end{proof}

\bibliographystyle{amsalpha}
\bibliography{bibfile}

\end{document}